\date{\today}
\newtheorem{theorem}{Theorem}[section]
\newtheorem{lemma}[theorem]{Lemma}
\newtheorem{proposition}[theorem]{Proposition}
\theoremstyle{definition}
\newtheorem{definition}[theorem]{Definition}
\newtheorem{example}[theorem]{Example}
\newtheorem{corollary}[theorem]{Corollary}
\newtheorem{remark}[theorem]{Remark}
\newcommand{\ot}{\otimes}
\newcommand{\co}{\circ}
\begin{document}

\begin{center}

{\huge{\bf Cleft and Galois extensions associated to a weak Hopf quasigroup}}

\end{center}

\ \\
\begin{center}
{\bf J.N. Alonso \'Alvarez$^{1}$, J.M. Fern\'andez Vilaboa$^{2}$, R.
Gonz\'{a}lez Rodr\'{\i}guez$^{3}$}
\end{center}

\ \\
\hspace{-0,5cm}$^{1}$ Departamento de Matem\'{a}ticas, Universidad
de Vigo, Campus Universitario Lagoas-Marcosende, E-36280 Vigo, Spain
(e-mail: jnalonso@ uvigo.es)
\ \\
\hspace{-0,5cm}$^{2}$ Departamento de \'Alxebra, Universidad de
Santiago de Compostela.  E-15771 Santiago de Compostela, Spain
(e-mail: josemanuel.fernandez@usc.es)
\ \\
\hspace{-0,5cm}$^{3}$ Departamento de Matem\'{a}tica Aplicada II,
Universidad de Vigo, Campus Universitario Lagoas-Marcosende, E-36310
Vigo, Spain (e-mail: rgon@dma.uvigo.es)
\ \\

{\bf Abstract} In this paper we introduce the notions of cleft and Galois (with normal basis) extension associated to a weak Hopf quasigroup. We show that, under suitable conditions, both notions are equivalent. As a particular instance we recover the classical results for (weak) Hopf algebras. Moreover, taking into account that weak Hopf quasigroups generalize the notion of Hopf quasigroup, we obtain the definitions of cleft and Galois (with normal basis) extension associated to a Hopf quasigroup and we get the equivalence betwen these extensions in this setting. 

\vspace{0.5cm}

{\bf Keywords.} Hopf algebra, weak Hopf algebra, Hopf quasigroup, weak Hopf
quasigroup, cleft extension, Galois extension, normal basis.

{\bf MSC 2010:} 18D10, 16T05, 17A30, 20N05.

\section{introduction}
The notion of Galois extension asssociated to a Hopf algebra $H$ was introduced in 1981 by Kreimer and Takeuchi  in the following way:  let $A$ be a right $H$-comodule algebra with
coaction $\rho_{A}(a)=a_{(0)}\otimes a_{(1)}$, then the extension
$A^{coH}\hookrightarrow  A$, where $A^{coH}=\{a\in A\;;\; \rho_{A}(a)=a\otimes
1_{H}\}$  is the subalgebra of coinvariant elements, is $H$-Galois if the
canonical morphism $\gamma_{A}:A\otimes_{A^{coH}}A\rightarrow A\otimes
H$, defined by $\gamma_{A}(a\otimes b)=ab_{(0)}\otimes b_{(1)}$,
is an isomorphism. This definition has its
origin in the approach to Galois theory of groups acting on
commutative rings developed by Chase, Harrison and Rosenberg and in the extension of this theory  to
coactions of a Hopf algebra $H$ acting on a commutative $k$-algebra $A$  over a commutative ring $k$, developed in
1969 by  Chase and Sweedler \cite{CSW}. An interesting class of $H$-Galois extensions  has been provided by those for which there exists a convolution invertible right $H$-comodule morphism $h:H\rightarrow A$ called the cleaving morphism. These extensions were called cleft  and it is well known that, using the notion of  normal basis introduced by  Kreimer
and Takeuchi  in \cite{KT}, Doi and Takeuchi proved in \cite{doi3} that  $A^{coH}\hookrightarrow A$ is a cleft extension if and only if it is $H$-Galois with normal basis, i.e., the extension $A^{coH}\hookrightarrow  A$ is $H$-Galois and $A$ is  isomorphic to the tensor product of   $A^{coH}$ with $H$ as left $A^{coH}$-modules and right $H$-comodules.

The result obtained by Doi and Takeuchi was generalized in \cite{ManelEmilio}  to $H$-Galois extensions for Hopf algebras living in a symmetric monoidal closed category ${\mathcal C}$ and in \cite{BRZ} Brzezi\'{n}ski proved that if $A$ is an algebra, $C$ is a coalgebra and $(A,C,\psi)$ is an entwining structure such that $A$ is an entwined module, the existence of a convolution invertible $C$-comodule morphism $h:C\rightarrow A$ is equivalent to  that $A$ is a Galois extension by the coalgebra $C$ (see \cite{BRZH} for the definition) and $A$ is isomorphic, as left $A^{coH}$-modules and right $C$-comodules, to the tensor product of the coinvariant subalgebra  $A^{coC}$ with $C$.

A more general result was proved in \cite{AFG2}, in a monoidal setting,  for weak Galois extensions associated to the weak entwining structures introduced by Caenepeel and De Groot in \cite{caengroot}. In \cite{AFG2} the  notion of weak cleft extension was defined, and Theorem 2.11  of \cite{AFG2} stated that for a weak entwining structure $(A,C,\psi)$ such that $A$ is an entwined module, if the functor $A\otimes -$ preserves coequalizers, $A$ is a weak $C$-cleft extension of the coinvariants subalgebra if and only if it is a weak $C$-Galois extension and the normal basis property, defined in \cite{AFG2}, holds. Since Galois extensions associated to weak Hopf algebras (see \cite{bohm}) are examples of weak Galois extensions, the characterization of weak cleft extensions in terms of weak Galois extensions satisfying the normal basis condition can be applied to them. Morever, this kind of result can be obtained for cleft extensions associated to lax entwining structures \cite{AFGS-1}, and for cleft extensions associated to co-extended weak entwining structures \cite{AFG-coexten}. 

The results cited in the previous paragraphs were proved in an associative setting because all the extensions are linked to Hopf algebras, to weak Hopf algebras,  or to algebraic structures related with them, i.e. entwining structures and weak entwining structures. The main motivation of this paper is to show that it is possible to obtain similar results  working in a non-associative context, that is, when we study extensions related with non-associative algebra structures like Hopf quasigroups or, more generally, like weak Hopf quasigroups.  Hopf quasigroups are a generalization of  Hopf algebras in the context of non-associative algebra, where  the lack of the associativity is compensated by some
axioms involving the antipode. The notion of Hopf quasigroup was introduced
 by Klim and Majid in \cite{Majidesfera}, in order to
understand the structure and relevant properties of the algebraic
$7$-sphere,  and is
a particular instance of  unital coassociative
$H$-bialgebra in the sense of P\'erez Izquierdo \cite{PI2}.
It includes as example
the enveloping algebra  of a Malcev algebra (see
\cite{Majidesfera} and \cite{PIS}) when the base ring has characteristic not equal to $2$ nor $3$. In this sense Hopf quasigroups extend the notion of Hopf algebra in a parallel way that Malcev algebras extend the one of Lie algebra. On the other hand, it also contains as an example the notion of quasigroup algebra 
of an I.P. loop. Therefore, Hopf quasigroups unify I.P. loops and Malcev
algebras in the same way that Hopf algebras unify groups and Lie
algebras.  On the other hand, weak Hopf quasigroups are a new Hopf algebra generalization (see \cite{AFG-Weak-quasi}) that  encompass weak Hopf algebras and Hopf quasigroups. As was proved in \cite{AFG-Weak-quasi}, the main family of non-trivial examples of these algebraic structures can be obtained  working with bigroupoids, i.e., bicategories where every $1$-cell is an equivalence and every $2$-cell is an isomorphism. 

The first result linking Hopf Galois extensions with normal basis and cleft extensions in the Hopf quasigroup setting can be found in 
\cite{AFG-3}. More specifically, in \cite{AFGS-2} we introduce the notion of cleft extension (cleft right $H$-comodule algebra) for a Hopf quasigroup $H$ in a strict monoidal category ${\mathcal C}$ with tensor product $\ot$ and unit object $K$. The notion of Galois extension with normal basis for $H$ was introduced in \cite{AFG-3}, and  we proved that, when the object of coinvariants is the unit object of the category, cleft extensions and  Galois extension with normal basis and with the inverse of the canonical morphism almost lineal, are the same.  Therefore, in \cite{AFG-3}, we extend the result proved by  Doi and Takeuchi in \cite{doi3} to the Hopf quasigroup setting, characterizing  Galois extensions with normal basis in terms of cleft extensions  when the object of coinvariants is $K$. The aim of this new paper is to show that all these results, that is, the one obtained for Hopf algebras in \cite{doi3}, the one obtained for weak Hopf algebras in \cite{AFG2}, and the  one proved for Hopf quasigroups in \cite{AFG-3}, are particular instances of a more general result that we can prove for weak Hopf quasigroups.

An outline of the paper is as follows. In Section 1 we set the general framework and review the basic properties of weak Hopf quasigroups,  in a strict symmetric monoidal category with equalizers and coequalizers, focusing in the following fact: if $H$ is a weak Hopf quasigroup and $\Pi_{H}^{L}$ is the target morphism (this morphism is defined as in the weak Hopf algebra setting), the image of $\Pi_{H}^{L}$, denoted by $H_{L}$, is a monoid, that is the restriction of the product of $H$ to $H_{L}$ is associative.  In Section 2, we introduce the notions of right $H$-comodule magma, weak $H$-Galois extension,  and weak $H$-Galois extension with normal basis, proving some technical results that we need in the following sections.  Section 3 is devoted to the study of weak $H$-cleft extensions for weak Hopf quasigroups. In particular we show that these kind of extensions contain as examples the notion of weak $H$-cleft extension associated to a weak Hopf algebra \cite{nmra1}, as well as the notion of cleft right $H$-comodule algebra introduced in \cite{AFGS-2} for Hopf quasigroups. In the last section, we can find the main result of this paper, which assures that for any right $H$-comodule magma $(A,\rho_{A})$ such that $A\ot -$ preserves coequalizers, under suitable conditions (see Theorem \ref{caracterizacion}), the following assertions are equivalent:
\begin{itemize}
\item  $A^{co H}\hookrightarrow A$ is a weak $H$-Galois extension with normal basis and the morphism $\gamma_{A}^{-1}$ is almost lineal.
\item  $A^{co H}\hookrightarrow A$ is a weak $H$-cleft extension.
\end{itemize}
In the associative setting the conditions assumed in Theorem \ref{caracterizacion} hold trivially and then it generalizes the one proved by Doi and Takeuchi for Hopf algebras in \cite{doi3}. Also, for a weak Hopf algebra $H$, we obtain an equivalence that is a particular instance of the one obtained in \cite{AFG2} for Galois extensions associated to weak entwining structures. Finally, as a corollary of Theorem \ref{caracterizacion}, we have a result for  Hopf quasigroups, which shows the close connection between the notion of cleft right $H$-comodule algebra and the one of $H$-Galois extension with normal basis introduced in this paper, improving the equivalence obtained in \cite{AFG-3} because we remove the condition $A^{co H}=K$.

\section{Weak Hopf quasigroups}

Throughout this paper $\mathcal C$ denotes a strict  symmetric monoidal category with tensor product $\ot$, unit object $K$ and natural isomorphism of symmetry $c$. For each object $M$ in  $ {\mathcal C}$, we denote the identity morphism by $id_{M}:M\rightarrow M$ and, for simplicity of notation, given objects $M$, $N$ and $P$ in ${\mathcal C}$ and a morphism $f:M\rightarrow N$, we write $P\ot f$ for $id_{P}\ot f$ and $f \ot P$ for $f\ot id_{P}$. We want to point out that there is no loss of generality in assuming that ${\mathcal C}$ is strict because by Theorem 3.5  of \cite{Christian} (which implies the Mac Lane's coherence theorem)  every monoidal category is monoidally equivalent to a strict one. This lets us to treat monoidal categories
as if they were strict and, as a consequence, the results proved in this paper hold for every non-strict symmetric monoidal category.

From now on we also assume that ${\mathcal C}$  admits equalizers and coequalizers. Then every idempotent morphism splits, i.e., for every morphism $\nabla_{Y}:Y\rightarrow Y$ such that $\nabla_{Y}=\nabla_{Y}\co\nabla_{Y}$, there exist an object $Z$ and morphisms $i_{Y}:Z\rightarrow Y$ and $p_{Y}:Y\rightarrow Z$ such that $\nabla_{Y}=i_{Y}\co p_{Y}$ and $p_{Y}\co i_{Y} =id_{Z}$.

\begin{definition}

{\rm By a unital  magma in ${\mathcal C}$ we understand a triple $A=(A, \eta_{A}, \mu_{A})$ where $A$ is an object in ${\mathcal C}$ and $\eta_{A}:K\rightarrow A$ (unit), $\mu_{A}:A\ot A \rightarrow A$ (product) are morphisms in ${\mathcal C}$ such that $\mu_{A}\co (A\ot \eta_{A})=id_{A}=\mu_{A}\co (\eta_{A}\ot A)$. If $\mu_{A}$ is associative, that is, $\mu_{A}\co (A\ot \mu_{A})=\mu_{A}\co (\mu_{A}\ot A)$, the unital magma will be called a monoid in ${\mathcal C}$.   Given two unital magmas
(monoids) $A= (A, \eta_{A}, \mu_{A})$ and $B=(B, \eta_{B}, \mu_{B})$, $f:A\rightarrow B$ is a morphism of unital magmas (monoids)  if $\mu_{B}\co (f\ot f)=f\co \mu_{A}$ and $ f\co \eta_{A}= \eta_{B}$. 

By duality, a counital comagma in ${\mathcal C}$ is a triple ${D} = (D, \varepsilon_{D}, \delta_{D})$ where $D$ is an object in ${\mathcal C}$ and $\varepsilon_{D}: D\rightarrow K$ (counit), $\delta_{D}:D\rightarrow D\ot D$ (coproduct) are morphisms in ${\mathcal C}$ such that $(\varepsilon_{D}\ot D)\co \delta_{D}= id_{D}=(D\ot \varepsilon_{D})\co \delta_{D}$. If $\delta_{D}$ is coassociative, that is, $(\delta_{D}\ot D)\co \delta_{D}= (D\ot \delta_{D})\co \delta_{D}$, the counital comagma will be called a comonoid. If ${D} = (D, \varepsilon_{D}, \delta_{D})$ and ${ E} = (E, \varepsilon_{E}, \delta_{E})$ are counital comagmas
(comonoids), $f:D\rightarrow E$ is  a morphism of counital comagmas (comonoids) if $(f\ot f)\co \delta_{D} =\delta_{E}\co f$ and  $\varepsilon_{E}\co f =\varepsilon_{D}.$

If  $A$, $B$ are unital magmas (monoids) in ${\mathcal C}$, the object $A\ot B$ is a unital  magma (monoid) in ${\mathcal C}$ where $\eta_{A\ot B}=\eta_{A}\ot \eta_{B}$ and $\mu_{A\ot B}=(\mu_{A}\ot \mu_{B})\co (A\ot c_{B,A}\ot B).$  In a dual way, if $D$, $E$ are counital comagmas (comonoids) in ${\mathcal C}$, $D\ot E$ is a  counital comagma (comonoid) in ${\mathcal C}$ where $\varepsilon_{D\ot E}=\varepsilon_{D}\ot \varepsilon_{E}$ and $\delta_{D\ot
E}=(D\ot c_{D,E}\ot E)\co( \delta_{D}\ot \delta_{E}).$

Finally, if $D$ is a comagma and $A$ a magma, given two morphisms $f,g:D\rightarrow A$ we will denote by $f\ast g$ its convolution product in ${\mathcal C}$, that is 
$$f\ast g=\mu_{A}\co (f\ot g)\co \delta_{D}.$$
}
\end{definition}

The notion of weak Hopf quasigroup in a braided monoidal category was introduced in \cite{AFG-Weak-quasi}. Now we recall this definition in our symmetric setting.

\begin{definition}
\label{Weak-Hopf-quasigroup}
{\rm A weak Hopf quasigroup $H$   in ${\mathcal C}$ is a unital magma $(H, \eta_H, \mu_H)$ and a comonoid $(H,\varepsilon_H, \delta_H)$ such that the following axioms hold:

\begin{itemize}

\item[(a1)] $\delta_{H}\co \mu_{H}=(\mu_{H}\ot \mu_{H})\co \delta_{H\ot H}.$

\item[(a2)] $\varepsilon_{H}\co \mu_{H}\co (\mu_{H}\ot H)=\varepsilon_{H}\co \mu_{H}\co (H\ot \mu_{H})$

\item[ ]$= ((\varepsilon_{H}\co \mu_{H})\ot (\varepsilon_{H}\co \mu_{H}))\co (H\ot \delta_{H}\ot H)$ 

\item[ ]$=((\varepsilon_{H}\co \mu_{H})\ot (\varepsilon_{H}\co \mu_{H}))\co (H\ot (c_{H,H}\co\delta_{H})\ot H).$

\item[(a3)]$(\delta_{H}\ot H)\co \delta_{H}\co \eta_{H}=(H\ot \mu_{H}\ot H)\co ((\delta_{H}\co \eta_{H}) \ot (\delta_{H}\co \eta_{H}))$  \item[ ]$=(H\ot (\mu_{H}\co c_{H,H})\ot H)\co ((\delta_{H}\co \eta_{H}) \ot (\delta_{H}\co \eta_{H})).$

\item[(a4)] There exists  $\lambda_{H}:H\rightarrow H$ in ${\mathcal C}$ (called the antipode of $H$) such that, if we denote the morphisms $id_{H}\ast \lambda_{H}$ by  $\Pi_{H}^{L}$ (target morphism) and $\lambda_{H}\ast id_{H}$ by $\Pi_{H}^{R}$ (source morphism),

\begin{itemize}

\item[(a4-1)] $\Pi_{H}^{L}=((\varepsilon_{H}\co \mu_{H})\ot H)\co (H\ot c_{H,H})\co ((\delta_{H}\co \eta_{H})\ot
H).$

\item[(a4-2)] $\Pi_{H}^{R}=(H\ot(\varepsilon_{H}\co \mu_{H}))\co (c_{H,H}\ot H)\co (H\ot (\delta_{H}\co \eta_{H})).$

\item[(a4-3)]$\lambda_{H}\ast \Pi_{H}^{L}=\Pi_{H}^{R}\ast \lambda_{H}= \lambda_{H}.$

\item[(a4-4)] $\mu_H\co (\lambda_H\ot \mu_H)\co (\delta_H\ot H)=\mu_{H}\co (\Pi_{H}^{R}\ot H).$

\item[(a4-5)] $\mu_H\co (H\ot \mu_H)\co (H\ot \lambda_H\ot H)\co (\delta_H\ot H)=\mu_{H}\co (\Pi_{H}^{L}\ot H).$

\item[(a4-6)] $\mu_H\co(\mu_H\ot \lambda_H)\co (H\ot \delta_H)=\mu_{H}\co (H\ot \Pi_{H}^{L}).$

\item[(a4-7)] $\mu_H\co (\mu_H\ot H)\co (H\ot \lambda_H\ot H)\co (H\ot \delta_H)=\mu_{H}\co (H\ot \Pi_{H}^{R}).$

\end{itemize}

\end{itemize}

Note that, if in the previous definition the triple $(H, \eta_H, \mu_H)$ is a monoid, we obtain the notion of weak Hopf algebra in a symmetric monoidal category. Then, if ${\mathcal C}$ is the category of vector spaces over a field ${\Bbb F}$, we have the monoidal version of the original definition of weak Hopf algebra introduced by B\"{o}hm, Nill and Szlach\'anyi in \cite{bohm}. On the other hand, under these conditions, if  $\varepsilon_H$ and $\delta_H$ are  morphisms of unital magmas (equivalently, $\eta_{H}$, $\mu_{H}$ are morphisms of counital comagmas), $\Pi_{H}^{L}=\Pi_{H}^{R}=\eta_{H}\ot \varepsilon_{H}$. As a consequence, conditions (a2), (a3), (a4-1)-(a4-3) trivialize, and we get the notion of Hopf quasigroup defined  by Klim and Majid in \cite{Majidesfera} in the category of vector spaces over a field ${\Bbb F}$.
}

\end{definition}

\begin{example} 

\label{main-example}

{\rm It is possible to obtain non-trivial examples of weak Hopf quasigroups by working with bicategories in the sense of B\'enabou \cite{BEN}. We give a brief summary of this construction. The interested reader can see the complete details in \cite{AFG-Weak-quasi}. A bicategory ${\mathcal B}$ consists of:
\begin{itemize}
\item A set ${\mathcal B}_{0}$, whose elements $x$ are called $0$-cells.
\item For each $x$, $y\in {\mathcal B}_{0}$, a category ${\mathcal B}(x,y)$ whose objects $f:x\rightarrow y$ are called $1$-cells and whose morphisms $\alpha:f \Rightarrow g$ are called $2$-cells. The composition of $2$-cells is called the vertical composition of $2$-cells and if $f$ is a $1$-cell in ${\mathcal B}(x,y)$, $x$ is called the source of $f$, represented by $s(f)$, and $y$ is called the target of $f$, denoted by $t(f)$. 
\item For each $x\in {\mathcal B}_{0}$, an object $1_{x}\in {\mathcal B}(x,x)$, called the identity of $x$; and for each $x,y,z\in {\mathcal B}_{0}$, a functor 
$${\mathcal B}(y,z)\times {\mathcal B}(x,y)\rightarrow {\mathcal B}(x,z)$$ 
which in objects is called the $1$-cell composition $(g,f)\mapsto g\co f$, and on arrows is called horizontal composition of $2$-cells: 
$$f,f^{\prime}\in {\mathcal B}(x,y), \;\; g,g^{\prime}\in {\mathcal B}(y,z), \; \alpha:f \Rightarrow f^{\prime}, \; \beta:g \Rightarrow g^{\prime}$$
$$(\beta, \alpha)\mapsto \beta\bullet \alpha:g\co f \Rightarrow g^{\prime}\co f^{\prime}$$ 
\item For each $f\in {\mathcal B}(x,y)$, $g\in {\mathcal B}(y,z)$ and  $h\in {\mathcal B}(z,w)$, an associative isomorphism $\xi_{h,g,f}: (h\co g)\co f\Rightarrow h\co (g\co f)$; and for each $1$-cell $f$,  unit  isomorphisms $l_{f}:1_{t(f)}\co f\Rightarrow f$, $r_{f}:f\co 1_{s(f)}\Rightarrow f$, satisfying the following coherence axioms:
\begin{itemize}
\item The morphism $\xi_{h,g,f}$ is natural in $h$, $f$ and $g$ and $l_{f}$, $r_{f}$ are natural in $f$.
\item Pentagon axiom: $ \xi_{k,h,g\co f}\co \xi_{k\co h,g, f}=(id_{k}\bullet \xi_{ h,g, f})\co 
\xi_{k, h\co g, f}\co (\xi_{k,h,g}\bullet id_{f}).$ 
\item Triangle axiom: $r_{g}\bullet id_{f}=(id_{g}\bullet l_{f})\co \xi_{g,1_{t(f)},f}.$ 

\end{itemize}
\end{itemize}
A bicategory is normal if the unit  isomorphisms are identities. Every bicategory is biequivalent to a normal one.  A $1$-cell $f$ is called an equivalence if there exists a $1$-cell $g:t(f)\rightarrow s(f)$ and two isomorphisms $g\co f\Rightarrow 1_{s(f)}$, $f\co g\Rightarrow 1_{t(f)}$.  In this case we will say that $g\in Inv(f)$ and, equivalently, $f\in Inv(g)$.

A bigroupoid is a bicategory where every $1$-cell is an equivalence and every $2$-cell is an isomorphism. We will say that a bigroupoid ${\mathcal B}$ is finite if ${\mathcal B}_{0}$ is finite and ${\mathcal B}(x,y)$ is small for all $x,y$. Note that if ${\mathcal B}$ is a  bigroupoid where ${\mathcal B}(x,y)$ is small for all $x,y$, and we pick up a finite number of $0$-cells, considering the full sub-bicategory  generated by these $0$-cells, we have an example of finite bigroupoid.

Let ${\mathcal B}$ be a finite normal bigroupoid and denote  by ${\mathcal B}_{1}$ the set of $1$-cells. Let ${\Bbb F}$ be a field and ${\Bbb F}{\mathcal B}$ the direct product 
$${\Bbb F}{\mathcal B}=\bigoplus_{f\in {\mathcal B}_{1}}Ff.$$
The vector space ${\Bbb F}{\mathcal B}$ is a unital nonassociative algebra  where the product of two $1$-cells is equal to their $1$-cell composition if the latter is defined and $0$ otherwise, i.e.,
$g.f=g\circ f$ if $s(g)=t(f)$ and
$g.f=0$ if $s(g)\neq t(f)$. The unit element is $$1_{{\Bbb F}{\mathcal B}}=\sum_{x\in {\mathcal B}_{0}}1_{x}.$$

Let $H={\Bbb F}{\mathcal B}/I({\mathcal B})$ be the quotient algebra where  $I({\mathcal B})$ is the ideal of ${\Bbb F}{\mathcal B}$ generated  by 
$$ h-g\co (f\co h),\; p-(p\co f)\co g,$$ 
with  $f\in {\mathcal B}_{1},$ 
$g\in Inv(f)$,  and $h,p \in {\mathcal B}_{1}$ such that $t(h)=s(f)$, $t(f)=s(p)$. In what follows, for any $1$-cell $f$ we denote its class in  $H$  by $[f]$. If we assume that $I({\mathcal B})$ is a proper ideal and for $[f]$ we define $[f]^{-1}$ by the class of $g\in Inv(f)$, we obtain  that $[f]^{-1}$ is well-defined. Therefore the vector space $H$ with the product $\mu_{H}([g]\ot [f])=[g.f]$ and  the  unit $$\eta_{ H}(1_{{\Bbb F}})=[1_{{\Bbb F}{\mathcal B}}]=\sum_{x\in {\mathcal B}_{0}}[1_{x}]$$ is a unital magma. Also, it is easy to show that $H$ is a comonoid with coproduct $\delta_{H}([f])=[f]\ot [f]$ and counit $\varepsilon_{H}([f])=1_{{\Bbb F}}$. Moreover,  the antipode is defined by  $\lambda_{H}:H\rightarrow H$, $\lambda_{H}([f])=[f]^{-1}$  and $H=(H,\eta_{H}, \mu_{H}, \varepsilon_{H}, \delta_{H}, \lambda_{H})$ is a weak Hopf quasigroup. Note that, in this example, if ${\mathcal B}_{0}=\{x\}$ we obtain that $H$ is a Hopf quasigroup. Moreover, if $\vert {\mathcal B}_{0}\vert >1$ and the product defined in $H$ is associative we have an example of weak Hopf algebra. 

}

\end{example}

In the end of this section we recall some properties of weak Hopf quasigroups we will need in what sequel. The interested reader can see the proofs in \cite{AFG-Weak-quasi}.

First note that, by Propositions 3.1 and 3.2 of \cite{AFG-Weak-quasi}, the following equalities 
\begin{equation}
\label{pi-l}
\Pi_{H}^{L}\ast id_{H}=id_{H}\ast \Pi_{H}^{R}=id_{H},
\end{equation}
\begin{equation}
\label{pi-eta}
\Pi_{H}^{L}\co\eta_{H}=\eta_{H}=\Pi_{H}^{R}\co\eta_{H},
\end{equation}
\begin{equation}
\label{pi-varep}
\varepsilon_{H}\co \Pi_{H}^{L}=\varepsilon_{H}=\varepsilon_{H}\co \Pi_{H}^{R}.
\end{equation}
hold, the antipode of a  weak Hopf quasigroup $H$ is unique and  $\lambda_{H}\co \eta_{H}=\eta_{H}$,  $\varepsilon_{H}\co\lambda_{H}=\varepsilon_{H}$. Moreover, if we define the morphisms $\overline{\Pi}_{H}^{L}$ and $\overline{\Pi}_{H}^{R}$ by 
$$\overline{\Pi}_{H}^{L}=(H\ot (\varepsilon_{H}\co \mu_{H}))\co ((\delta_{H}\co \eta_{H})\ot H)$$
and 
$$\overline{\Pi}_{H}^{R}=((\varepsilon_{H}\co \mu_{H})\ot H)\co (H\ot (\delta_{H}\co \eta_{H})),$$
in Proposition 3.4 of \cite{AFG-Weak-quasi}, we proved that $\Pi_{H}^{L}$, $\Pi_{H}^{R}$, $\overline{\Pi}_{H}^{L}$ and 
$\overline{\Pi}_{H}^{R}$ are idempotent.

On the other hand, Propositions 3.5, 3.7 and 3.9 of \cite{AFG-Weak-quasi} assert that 
\begin{equation}
\label{mu-pi-l}
\mu_{H}\co (H\ot \Pi_{H}^{L})=((\varepsilon_{H}\co
\mu_{H})\ot H)\co (H\ot c_{H,H})\co (\delta_{H}\ot
H),
\end{equation}
\begin{equation}
\label{mu-pi-r}
\mu_{H}\co (\Pi_{H}^{R}\ot H)=(H\ot(\varepsilon_{H}\co \mu_{H}))\co (c_{H,H}\ot H)\co
(H\ot \delta_{H}),
\end{equation}
\begin{equation}
\label{mu-pi-l-var}
\mu_{H}\co (H\ot \overline{\Pi}_{H}^{L})=(H\ot (\varepsilon_{H}\co
\mu_{H}))\co (\delta_{H}\ot H),
\end{equation}
\begin{equation}
\label{mu-pi-r-var}
\mu_{H}\co (\overline{\Pi}_{H}^{R}\ot H)=((\varepsilon_{H}\co
\mu_{H})\ot H)\co (H\ot \delta_{H}), 
\end{equation}
\begin{equation}
\label{delta-pi-l}
 (H\ot \Pi_{H}^{L})\co \delta_{H}=(\mu_{H}\ot H)\co (H\ot c_{H,H})\co ((\delta_{H}\co \eta_{H})\ot
H),
\end{equation}
\begin{equation}
\label{delta-pi-r}
(\Pi_{H}^{R}\ot H)\co \delta_{H}=(H\ot \mu_{H})\co (c_{H,H}\ot H)\co
(H\ot (\delta_{H}\co \eta_{H})),
\end{equation}
\begin{equation}
\label{delta-pi-l-var}
(\overline{\Pi}_{H}^{L}\ot H)\co \delta_{H}=(H\ot 
\mu_{H})\co ((\delta_{H}\co \eta_{H})\ot H),
\end{equation}
\begin{equation}
\label{delta-pi-r-var}
 (H\ot \overline{\Pi}_{H}^{R})\co \delta_{H}=(
\mu_{H}\ot H)\co (H\ot (\delta_{H}\co \eta_{H})), 
\end{equation}
\begin{equation}
\label{pi-delta-mu-pi-1}
\Pi^{L}_{H}\circ \mu_{H}\circ (H\ot
\Pi^{L}_{H})=\Pi^{L}_{H}\circ \mu_{H},
\end{equation}
\begin{equation}
\label{pi-delta-mu-pi-2}
\Pi^{R}_{H}\circ
\mu_{H}\circ (\Pi^{R}_{H}\ot H)=\Pi^{R}_{H}\circ \mu_{H},
\end{equation}
\begin{equation}
\label{pi-delta-mu-pi-3}
(H\ot \Pi^{L}_{H})\circ \delta_{H}\circ
\Pi^{L}_{H}=\delta_{H}\circ \Pi^{L}_{H},
\end{equation}
\begin{equation}
\label{pi-delta-mu-pi-4}
( \Pi^{R}_{H}\ot
H)\circ \delta_{H}\circ \Pi^{R}_{H}=\delta_{H}\circ \Pi^{R}_{H}, 
\end{equation}
hold. 

Also, it is possible to show the following identities involving the idempotent morphisms $\Pi_{H}^{L}$, $\Pi_{H}^{R}$, $\overline{\Pi}_{H}^{L}$,  $\overline{\Pi}_{H}^{R}$ and the antipode $\lambda_{H}$  (see  Propositions 3.11 and 3.12 of \cite{AFG-Weak-quasi}): 
\begin{equation}
\label{pi-composition-1}
\Pi_{H}^{L}\co
\overline{\Pi}_{H}^{L}=\Pi_{H}^{L},\;\;\;\Pi_{H}^{L}\co
\overline{\Pi}_{H}^{R}=\overline{\Pi}_{H}^{R},
\end{equation}
\begin{equation}
\label{pi-composition-2}
\overline{\Pi}_{H}^{L}\co
\Pi_{H}^{L}=\overline{\Pi}_{H}^{L},\;\;\;\overline{\Pi}_{H}^{R}\co
\Pi_{H}^{L}=\Pi_{H}^{L},
\end{equation}
\begin{equation}
\label{pi-composition-3}
\Pi_{H}^{R}\co
\overline{\Pi}_{H}^{L}=\overline{\Pi}_{H}^{L},\;\;\;
\Pi_{H}^{R}\co
\overline{\Pi}_{H}^{R}=\Pi_{H}^{R},
\end{equation}
\begin{equation}
\label{pi-composition-4}
\overline{\Pi}_{H}^{L}\co
\Pi_{H}^{R}=\Pi_{H}^{R},\;\;\; \overline{\Pi}_{H}^{R}\co
\Pi_{H}^{R}=\overline{\Pi}_{H}^{R}, 
\end{equation}
\begin{equation}
\label{pi-antipode-composition-1}
\Pi_{H}^{L}\co \lambda_{H}=\Pi_{H}^{L}\co
\Pi_{H}^{R}= \lambda_{H}\co \Pi_{H}^{R},
\end{equation}
\begin{equation}
\label{pi-antipode-composition-2}
\Pi_{H}^{R}\co
\lambda_{H}=\Pi_{H}^{R}\co \Pi_{H}^{L}= \lambda_{H}\co
\Pi_{H}^{L},
\end{equation}
\begin{equation}
\label{pi-antipode-composition-3}
\Pi_{H}^{L}=\overline{\Pi}_{H}^{R}\co
\lambda_{H}=\lambda_{H}
\co\overline{\Pi}_{H}^{L},
\end{equation}
\begin{equation}
\label{pi-antipode-composition-4}
\Pi_{H}^{R}=
\overline{\Pi}_{H}^{L}\co \lambda_{H}=\lambda_{H} \co
\overline{\Pi}_{H}^{R}. 
\end{equation}

Moreover, by Proposition 3.16  of \cite{AFG-Weak-quasi},  the equalities 
\begin{equation}
\label{mu-assoc-1}
\mu_{H}\co (\mu_{H}\ot H)\co (H\ot ((\Pi_{H}^{L}\ot H)\co \delta_{H}))=\mu_{H}=
\mu_{H}\co (\mu_{H}\ot \Pi_{H}^{R})\co (H\ot  \delta_{H}),
\end{equation}
\begin{equation}
\label{mu-assoc-2}
\mu_{H}\co (\Pi_{H}^{L}\ot \mu_{H})\co (\delta_{H}\ot H)=\mu_{H}=
\mu_{H}\co (H\ot (\mu_{H}\co ( \Pi_{H}^{R}\ot H)))\co (\delta_{H}\ot H),
\end{equation}
\begin{equation}
\label{mu-assoc-3}
\mu_{H}\co (\lambda_{H}\ot (\mu_{H}\co ( \Pi_{H}^{L}\ot H)))\co (\delta_{H}\ot H)=\mu_{H}\co (\lambda_{H}\ot H)\end{equation}
$$=
\mu_{H}\co (\Pi_{H}^{R}\ot (\mu_{H}\co ( \lambda_{H}\ot H)))\co (\delta_{H}\ot H),
$$
\begin{equation}
\label{mu-assoc-4}
\mu_{H}\co (\mu_{H}\ot H)\co (H\ot ((\lambda_{H}\ot\Pi_{H}^{L})\co \delta_{H}))=\mu_{H}\co (H\ot \lambda_{H})\end{equation}
$$= \mu_{H}\co (\mu_{H}\ot H)\co (H\ot ((\Pi_{H}^{R}\ot \lambda_{H})\co \delta_{H})),$$
hold and we  have that
\begin{equation}
\label{2-mu-delta-pi-l}
(\mu_{H}\ot (\mu_{H}\co (H\ot \Pi_{H}^{L})))\co \delta_{H\ot H}=(\mu_{H}\ot H)\co (H\ot c_{H,H})\co (\delta_{H}\ot H),
\end{equation}
\begin{equation}
\label{2-mu-delta-pi-r}
((\mu_{H}\co (\Pi_{H}^{R}\ot H))\ot \mu_{H})\co \delta_{H\ot H}=(H\ot \mu_{H})\co (c_{H,H}\ot H)\co (H\ot \delta_{H}). 
\end{equation}

Therefore (see Theorem 3.19 of \cite{AFG-Weak-quasi}), for any weak Hopf quasigroup $H$ the antipode of $H$ is antimultiplicative and anticomultiplicative, i.e.,
\begin{equation}
\label{anti-antipode-1}
\lambda_{H}\co \mu_{H}=\mu_{H}\co c_{H,H}\co (\lambda_{H}\ot
\lambda_{H}),
\end{equation}
\begin{equation}
\label{anti-antipode-2}
\delta_{H}\co \lambda_{H}=(\lambda_{H}\ot \lambda_{H})\co
c_{H,H}\co \delta_{H}. 
\end{equation}

Finally, if $H_{L}=Im(\Pi_{H}^{L})$ and 
$p_{L}:H\rightarrow H_{L}$ and $i_{L}:H_{L}\rightarrow H$ are the
morphisms such that $\Pi_{H}^{L}=i_{L}\co p_{L}$ and $p_{L}\co
i_{L}=id_{H_{L}}$, 
$$
\setlength{\unitlength}{3mm}
\begin{picture}(30,4)
\put(3,2){\vector(1,0){4}} \put(11,2.5){\vector(1,0){10}}
\put(11,1.5){\vector(1,0){10}} \put(1,2){\makebox(0,0){$H_{L}$}}
\put(9,2){\makebox(0,0){$H$}} \put(24,2){\makebox(0,0){$H\ot H$}}
\put(5.5,3){\makebox(0,0){$i_{L}$}} \put(16,3.5){\makebox(0,0){$
\delta_{H}$}} \put(16,0.5){\makebox(0,0){$(H\ot \Pi_{H}^{L}) \co
\delta_{H}$}}
\end{picture}
$$
is an equalizer diagram  and
$$
\setlength{\unitlength}{1mm}
\begin{picture}(101.00,10.00)
\put(20.00,8.00){\vector(1,0){25.00}}
\put(20.00,4.00){\vector(1,0){25.00}}
\put(55.00,6.00){\vector(1,0){21.00}}
\put(32.00,11.00){\makebox(0,0)[cc]{$\mu_{H}$ }}
\put(33.00,0.00){\makebox(0,0)[cc]{$\mu_{H}\co (H\ot \Pi_{H}^{L})
$ }} \put(65.00,9.00){\makebox(0,0)[cc]{$p_{L}$ }}
\put(13.00,6.00){\makebox(0,0)[cc]{$ H\otimes H$ }}
\put(50.00,6.00){\makebox(0,0)[cc]{$ H$ }}
\put(83.00,6.00){\makebox(0,0)[cc]{$H_{L} $ }}
\end{picture}
$$
is a coequalizer diagram. As a consequence, $(H_{L},
\eta_{H_{L}}=p_{L}\co \eta_{H}, \mu_{H_{L}}=p_{L}\co \mu_{H}\co
(i_{L}\ot i_{L}))$ is a unital magma in ${\mathcal C}$ and $(H_{L},
\varepsilon_{H_{L}}=\varepsilon_{H}\co i_{L}, \delta_{H}=(p_{L}\ot
p_{L})\co \delta_{H}\co i_{L})$ is a comonoid in ${\mathcal C}$ (see Proposition 3.13  of \cite{AFG-Weak-quasi}). 

If $H$ is the weak Hopf quasigroup defined in Example \ref{main-example} note that 
$H_{L}=\langle [1_{x}], \; x\in {\mathcal B}_{0}\rangle$. Then, in this case we have that  the induced product $\mu_{H_{L}}$ is associative because $[1_{x}].([1_{y}].[1_{z}])$ and $([1_{x}].[1_{y}]).[1_{z}]$ are equal to 
$[1_{x}]$ if $x=y=z$ and $0$ otherwise. Surprisingly, the associativity of the product $\mu_{H_{L}}$ is a general property:

\begin{proposition}
\label{monoid-hl}
Let $H$ be a weak Hopf quasigroup. The following identities hold:
\begin{equation}
\label{monoid-hl-1}
\mu_{H}\co ((\mu_{H}\co (i_{L}\ot H))\ot H)=\mu_{H}\co (i_{L}\ot \mu_{H}), 
\end{equation}
\begin{equation}
\label{monoid-hl-2}
\mu_{H}\co (H\ot (\mu_{H}\co (i_{L}\ot H)))=\mu_{H}\co ((\mu_{H}\co (H\ot i_{L}))\ot H), 
\end{equation}
\begin{equation}
\label{monoid-hl-3}
\mu_{H}\co (H\ot (\mu_{H}\co (H\ot i_{L})))=\mu_{H}\co (\mu_{H}\ot i_{L}). 
\end{equation}
As a consequence, the unital magma $H_{L}$ is a monoid in ${\mathcal C}$.
\end{proposition}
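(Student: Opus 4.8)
The plan is first to establish the three partial-associativity identities \eqref{monoid-hl-1}, \eqref{monoid-hl-2}, \eqref{monoid-hl-3}, and then to read off the associativity of $\mu_{H_{L}}=p_{L}\co\mu_{H}\co(i_{L}\ot i_{L})$ from them.

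The recurring tool is the equalizer diagram displayed just above the statement, which says that $i_{L}$ equalizes $\delta_{H}$ and $(H\ot\Pi_{H}^{L})\co\delta_{H}$; since $\Pi_{H}^{L}=i_{L}\co p_{L}$ and $p_{L}$ is a split epimorphism, this is the identity \eqref{pi-delta-mu-pi-3}. Thus whenever $\delta_{H}$ is applied to a factor coming from $H_{L}$ (i.e.\ lying in $Im(\Pi_{H}^{L})$) we may insert a copy of $\Pi_{H}^{L}$ on the second tensorand for free, and, using the anticomultiplicativity \eqref{anti-antipode-2} of $\lambda_{H}$ together with \eqref{pi-antipode-composition-1}--\eqref{pi-antipode-composition-4}, we may likewise control the first tensorand. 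Moreover, because $i_{L}=\Pi_{H}^{L}\co i_{L}$ and $p_{L}$ is (split) epi, each of \eqref{monoid-hl-1}--\eqref{monoid-hl-3} is equivalent to the same equality with $\Pi_{H}^{L}$ in place of $i_{L}$; I will prove these latter forms.

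For \eqref{monoid-hl-1} I would rewrite the left factor by means of axiom (a4-5), $\mu_{H}\co(\Pi_{H}^{L}\ot H)=\mu_{H}\co(H\ot\mu_{H})\co(H\ot\lambda_{H}\ot H)\co(\delta_{H}\ot H)$, so that both members become expressions built from $\mu_{H}$, $\lambda_{H}$ and $\delta_{H}$ applied to a single generic element; the problem then reduces to shifting the remaining bracket on the two arbitrary factors, which I would carry out by repeatedly invoking the flexibility identities \eqref{mu-assoc-1}--\eqref{mu-assoc-4}, using \eqref{pi-delta-mu-pi-3} to replace the pertinent coproduct leg by its image under $\Pi_{H}^{L}$, and then collapsing with \eqref{mu-pi-l}, \eqref{delta-pi-l} and axioms (a4-4)--(a4-7). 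Identities \eqref{monoid-hl-2} and \eqref{monoid-hl-3} are handled in the same way, now starting from axiom (a4-6), $\mu_{H}\co(H\ot\Pi_{H}^{L})=\mu_{H}\co(\mu_{H}\ot\lambda_{H})\co(H\ot\delta_{H})$, and axiom (a4-7), and using the ``$\Pi_{H}^{R}$''-side identities \eqref{mu-pi-r}, \eqref{delta-pi-r}, \eqref{mu-assoc-3}, \eqref{mu-assoc-4} in place of their ``$\Pi_{H}^{L}$'' analogues.

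Finally, for the monoid claim: one first checks that $Im(\Pi_{H}^{L})$ is closed under $\mu_{H}$. Indeed, by \eqref{mu-pi-l} and \eqref{pi-delta-mu-pi-3},
$$\mu_{H}\co(\Pi_{H}^{L}\ot\Pi_{H}^{L})=((\varepsilon_{H}\co\mu_{H})\ot H)\co(H\ot c_{H,H})\co(((H\ot\Pi_{H}^{L})\co\delta_{H}\co\Pi_{H}^{L})\ot H),$$
and the output leg of the right-hand side already lies in $Im(\Pi_{H}^{L})$, so that $\Pi_{H}^{L}\co\mu_{H}\co(\Pi_{H}^{L}\ot\Pi_{H}^{L})=\mu_{H}\co(\Pi_{H}^{L}\ot\Pi_{H}^{L})$; precomposing with $i_{L}\ot i_{L}$ and using $i_{L}=\Pi_{H}^{L}\co i_{L}$ yields $\Pi_{H}^{L}\co\mu_{H}\co(i_{L}\ot i_{L})=\mu_{H}\co(i_{L}\ot i_{L})$, i.e.\ $i_{L}\co\mu_{H_{L}}=\mu_{H}\co(i_{L}\ot i_{L})$. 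Using this together with $i_{L}\co p_{L}=\Pi_{H}^{L}$ and $p_{L}\co i_{L}=id_{H_{L}}$, a short computation gives
$$\mu_{H_{L}}\co(\mu_{H_{L}}\ot H_{L})=p_{L}\co\mu_{H}\co(\mu_{H}\ot H)\co(i_{L}\ot i_{L}\ot i_{L}),$$
$$\mu_{H_{L}}\co(H_{L}\ot\mu_{H_{L}})=p_{L}\co\mu_{H}\co(H\ot\mu_{H})\co(i_{L}\ot i_{L}\ot i_{L}),$$
and these right-hand sides coincide by \eqref{monoid-hl-1} (precomposed with $H_{L}\ot i_{L}\ot i_{L}$ and postcomposed with $p_{L}$); equally one could use \eqref{monoid-hl-2} or \eqref{monoid-hl-3}. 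Hence $\mu_{H_{L}}$ is associative and $H_{L}$ is a monoid.

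The step I expect to be genuinely laborious is the verification of \eqref{monoid-hl-1}--\eqref{monoid-hl-3}: the delicate point is to organize axioms (a4-4)--(a4-7) and the flexibility identities \eqref{mu-assoc-1}--\eqref{mu-assoc-4} so that, once \eqref{pi-delta-mu-pi-3} has been used to force one of the three arguments into $Im(\Pi_{H}^{L})$, the bracket on the remaining two (arbitrary) arguments can be shuffled freely. Everything else — the reductions to the $\Pi_{H}^{L}$-forms, the closure of $Im(\Pi_{H}^{L})$ under $\mu_{H}$, and the final passage to $\mu_{H_{L}}$ — is formal manipulation with the idempotents $\Pi_{H}^{L}$, $\Pi_{H}^{R}$, $\overline{\Pi}_{H}^{L}$, $\overline{\Pi}_{H}^{R}$ and the antipode.
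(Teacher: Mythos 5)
The formal scaffolding of your argument is sound and coincides with the paper's: the reduction of each identity to its $\Pi_{H}^{L}$-form via the split epimorphism $p_{L}$, the closure computation $\Pi_{H}^{L}\co\mu_{H}\co(\Pi_{H}^{L}\ot\Pi_{H}^{L})=\mu_{H}\co(\Pi_{H}^{L}\ot\Pi_{H}^{L})$ (which is exactly the paper's identity (\ref{pi-mu-pi-pi}), proved the same way from (\ref{mu-pi-l}) and (\ref{pi-delta-mu-pi-3})), and the passage from (\ref{monoid-hl-1}) to the associativity of $\mu_{H_{L}}$ are all correct.

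The gap is that the actual content of the proposition, namely the verification of (\ref{monoid-hl-1})--(\ref{monoid-hl-3}), is not carried out: you only describe a strategy (``repeatedly invoking the flexibility identities \ldots so that \ldots the bracket on the remaining two arbitrary arguments can be shuffled freely'') and you yourself flag this as the genuinely laborious step. As it stands the strategy is unlikely to close. The identities (\ref{mu-assoc-1})--(\ref{mu-assoc-4}) and axioms (a4-4)--(a4-7) all have a very rigid shape, with a $\Pi$ or a $\lambda_{H}$ sitting on one leg of the coproduct of one of the two factors being multiplied; they do not by themselves let you move a bracket between two \emph{arbitrary} factors once a third factor lies in $H_{L}$ --- that is precisely what the proposition asserts and what has to be proved. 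The mechanism the paper actually uses, and which is absent from your outline, is twofold. First, the auxiliary identities
$$\delta_{H}\co \mu_{H}\co (i_{L}\ot H)=(\mu_{H}\ot H)\co (i_{L}\ot \delta_{H}), \qquad \delta_{H}\co \mu_{H}\co (H\ot i_{L})=(\mu_{H}\ot H)\co (H\ot c_{H,H})\co (\delta_{H}\ot i_{L}),$$
which say that multiplication by a factor in $H_{L}$ interacts with $\delta_{H}$ in a one-sided way; these are obtained from (a1) together with (\ref{mu-pi-r-var}), (\ref{mu-pi-l}) and (\ref{pi-delta-mu-pi-3}). Second, axiom (a2) (the weak multiplicativity of $\varepsilon_{H}\co\mu_{H}$), which is what actually transfers the bracket: one uses (\ref{mu-assoc-1}) and (\ref{mu-pi-l}) to convert one of the multiplications into an $(\varepsilon_{H}\co\mu_{H})$-expression, then (a2) to reassociate inside the counit, and then the auxiliary identities to reassemble. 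Your outline never invokes (a2) or these comodule-type identities, and without them (or some equivalent associativity-up-to-counit input) the bracket-shuffling you describe cannot be completed.
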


\begin{proof} First we will  prove that 
\begin{equation}
\label{aux-1-monoid-hl}
\delta_{H}\co \mu_{H}\co (i_{L}\ot H)=(\mu_{H}\ot H)\co (i_{L}\ot \delta_{H}), 
\end{equation}
\begin{equation}
\label{aux-2-monoid-hl}
\delta_{H}\co \mu_{H}\co (H\ot i_{L})=(\mu_{H}\ot H)\co (H\ot c_{H,H})\co  (\delta_{H}\ot i_{L}). 
\end{equation}
Indeed: 
\begin{itemize}
\item[ ]$\hspace{0.38cm}\delta_{H}\co \mu_{H}\co (i_{L}\ot H)$

\item[ ]$=(\mu_{H}\ot \mu_{H})\co \delta_{H\ot H}\co (i_{L}\ot H) $

\item[ ]$=(\mu_{H}\ot (\mu_{H}\co (\overline{\Pi}_{H}^{R}\ot H))\co \delta_{H\ot H}\co (i_{L}\ot H) $

\item[ ]$=(\mu_{H}\ot (((\varepsilon_{H}\co \mu_{H})\ot H))\co (H\ot \delta_{H}))\co \delta_{H\ot H}\co (i_{L}\ot H)$
\item[ ]$=(\mu_{H}\ot H)\co (i_{L}\ot \delta_{H}) .$
\end{itemize}

The first   equality follows by (a1) of Definition \ref{Weak-Hopf-quasigroup}. The second one follows by Remark 3.15 of \cite{AFG-Weak-quasi} and the third one by (\ref{mu-pi-r-var}). Finally, the fourth one is a consequence of the coassociativity of $\delta_{H}$ and  (a1) of Definition \ref{Weak-Hopf-quasigroup}.

On the other hand,  by (a1) of Definition \ref{Weak-Hopf-quasigroup}, (\ref{pi-delta-mu-pi-3}), (\ref{mu-pi-l}) and the coassociativity of $\delta_{H}$, we obtain (\ref{aux-2-monoid-hl}) because

\begin{itemize}
\item[ ]$\hspace{0.38cm} \delta_{H}\co \mu_{H}\co (H\ot i_{L})$

\item[ ]$=(\mu_{H}\ot \mu_{H})\co \delta_{H\ot H}\co (H\ot i_{L})  $

\item[ ]$=(\mu_{H}\ot (\mu_{H}\co (H\ot \Pi_{H}^{L})))\co \delta_{H\ot H}\co (H\ot i_{L}) $

\item[ ]$=(\mu_{H}\ot (((\varepsilon_{H}\co \mu_{H})\ot H)\co (H\ot c_{H,H})\co (\delta_{H}\ot H)))\co \delta_{H\ot H}\co (H\ot i_{L})  $

\item[ ]$=(\mu_{H}\ot H)\co (H\ot c_{H,H})\co  (\delta_{H}\ot i_{L}) .$
\end{itemize}

Then, (\ref{monoid-hl-1}) holds because

\begin{itemize}
\item[ ]$\hspace{0.38cm} \mu_{H}\co ((\mu_{H}\co (i_{L}\ot H))\ot H)$

\item[ ]$= \mu_{H}\co ((\mu_{H}\co (H\ot \Pi_{H}^{L}))\ot H)\co ( (\mu_{H}\co (i_{L}\ot H)) \ot \delta_{H})$

\item[ ]$= (\varepsilon_{H}\ot H)\co \mu_{H\ot H}\co ((\delta_{H}\co \mu_{H}\co (i_{L}\ot H))\ot \delta_{H})$

\item[ ]$= (\varepsilon_{H}\ot H)\co \mu_{H\ot H}\co (((\mu_{H}\ot H)\co (i_{L}\ot \delta_{H}))\ot \delta_{H}) $

\item[ ]$=(\varepsilon_{H}\ot H)\co (\mu_{H}\ot H)\co (i_{L}\ot ((\mu_{H}\ot \mu_{H})\co \delta_{H\ot H}))  $

\item[ ]$= (\varepsilon_{H}\ot H)\co (\mu_{H}\ot H)\co (i_{L}\ot (\delta_{H}\co \mu_{H})) $

\item[ ]$=(\varepsilon_{H}\ot H) \co  \delta_{H}\co \mu_{H}\co (i_{L}\ot \mu_{H}) $

\item[ ]$= \mu_{H}\co (i_{L}\ot \mu_{H}).$

\end{itemize}
 
The first equality follows by (\ref{mu-assoc-1}), the second one by (\ref{mu-pi-l}) and the third  and sixth ones  by (\ref{aux-1-monoid-hl}). The fourth one is a consequence of (a2) of Definition \ref{Weak-Hopf-quasigroup}. In the fifth one we used (a1) of Definition \ref{Weak-Hopf-quasigroup} and the last one relies on the properties of the counit.

The proof for (\ref{monoid-hl-2}) is the following: 

\begin{itemize}
\item[ ]$\hspace{0.38cm} \mu_{H}\co (H\ot (\mu_{H}\co (i_{L}\ot H))$

\item[ ]$= \mu_{H}\co ((\mu_{H}\co (H\ot \Pi_{H}^{L}))\ot H)\co (H\ot (\delta_{H}	\co \mu_{H}\co (i_{L}\ot H)))$

\item[ ]$= \mu_{H}\co ((\mu_{H}\co (H\ot \Pi_{H}^{L}))\ot H)\co (H\ot \mu_{H}\ot H)\co (H\ot i_{L}\ot \delta_{H})$

\item[ ]$=(\varepsilon_{H}\ot H)\co\mu_{H\ot H}\co (\delta_{H}\ot ((\mu_{H}\ot H)\co (i_{L}\ot \delta_{H})))  $

\item[ ]$=((\varepsilon_{H}\co \mu_{H})\ot \mu_{H})\co (H\ot  c_{H,H}\ot H)\co (((\mu_{H}\ot H)\co (H\ot c_{H,H})\co (\delta_{H}\ot i_{L}))\ot \delta_{H})$

\item[ ]$=((\varepsilon_{H}\co \mu_{H})\ot \mu_{H})\co ( H\ot c_{H,H}\ot H)\co ((\delta_{H}\co \mu_{H}\co (H\ot i_{L}))\ot \delta_{H}) $

\item[ ]$= (\varepsilon_{H}\ot H)\co\delta_{H}\co \mu_{H}\co ((\mu_{H}\co (H\ot i_{L}))\ot H)$

\item[ ]$= \mu_{H}\co ((\mu_{H}\co (H\ot i_{L}))\ot H).$

\end{itemize}

The first equality is a consequence of (\ref{mu-assoc-1}), the second one follows by (\ref{aux-1-monoid-hl}) and in the third one we used (\ref{mu-pi-l}). The fourth equality relies on the naturalness of $c$ and   (a2) of Definition \ref{Weak-Hopf-quasigroup}. The fifth one follows from (\ref{aux-2-monoid-hl}), in the sixth equality we applied (a1) of Definition \ref{Weak-Hopf-quasigroup} and the last one follows by the properties of the counit.

Similarly, we will prove (\ref{monoid-hl-3}). Indeed:

\begin{itemize}
\item[ ]$\hspace{0.38cm}\mu_{H}\co (H\ot (\mu_{H}\co (H\ot i_{L})) $

\item[ ]$=\mu_{H}\co ((\mu_{H}\co (H\ot \Pi_{H}^{L}))\ot H)\co (H\ot (\delta_{H}	\co \mu_{H}\co (H\ot i_{L}))) $

\item[ ]$=\mu_{H}\co ((\mu_{H}\co (H\ot \Pi_{H}^{L}))\ot H)\co (H\ot ((\mu_{H}\ot H)\co (H\ot c_{H,H})\co  (\delta_{H}\ot i_{L}))) $

\item[ ]$= (\varepsilon_{H}\ot H)\co\mu_{H\ot H}\co (\delta_{H}\ot ((\mu_{H}\ot H)\co (H\ot c_{H,H})\co  (\delta_{H}\ot i_{L}))) $

\item[ ]$=(\varepsilon_{H}\ot H)\co (\mu_{H}\ot \mu_{H})\co ( \mu_{H}\ot c_{H,H}\ot H)\co (H\ot c_{H,H}\ot c_{H,H})\co 
(\delta_{H}\ot \delta_{H}\ot i_{L}) $

\item[ ]$=((\varepsilon_{H}\co \mu_{H})\ot H)\co (H\ot c_{H,H})\co ((\delta_{H}\co \mu_{H})\ot i_{L}) $

\item[ ]$= \mu_{H}\co (\mu_{H}\ot (\Pi_{H}^{L}\co i_{L}))$

\item[ ]$=\mu_{H}\co (\mu_{H}\ot i_{L}) .$

\end{itemize}

The first equality follows by (\ref{mu-assoc-1}), the second one by (\ref{aux-2-monoid-hl}) and the third one by (\ref{mu-pi-l}). The fourth one is a consequence of the naturalness of $c$ and (a2) of Definition \ref{Weak-Hopf-quasigroup}. In the fifth one we used (a1) of Definition \ref{Weak-Hopf-quasigroup}, the sixth one follows by (\ref{aux-2-monoid-hl}) and the last one relies on the properties of $\Pi_{H}^{L}$.

Finally, by Proposition 3.9 of \cite{AFG-Weak-quasi}, (\ref{monoid-hl-2}) and the equality
\begin{equation}
\label{pi-mu-pi-pi}
\Pi_{H}^{L}\co \mu_{H}\co (\Pi_{H}^{L}\ot \Pi_{H}^{L})=\mu_{H}\co (\Pi_{H}^{L}\ot \Pi_{H}^{L}),
\end{equation}
 it is easy to show that $\mu_{H_{L}}\co (H_{L}\ot \mu_{H_{L}})=\mu_{H_{L}}\co (\mu_{H_{L}}\ot H_{L})$ and therefore the unital magma $H_{L}$ is a monoid in ${\mathcal C}$. Note that (\ref{pi-mu-pi-pi}) holds because, by (\ref{mu-pi-l}), (\ref{pi-delta-mu-pi-3}) and the naturalness of $c$, we have 
 $$\mu_{H}\co (\Pi_{H}^{L}\ot \Pi_{H}^{L})=((\varepsilon_{H}\co \mu_{H})\ot H)\co (H\ot c_{H,H})\co ((\delta_{H}\co \Pi_{H}^{L})\ot H)$$
 $$=((\varepsilon_{H}\co \mu_{H})\ot H)\co (H\ot c_{H,H})\co (((H\ot \Pi_{H}^{L})\co \delta_{H}\co \Pi_{H}^{L})\ot H)=\Pi_{H}^{L}\co \mu_{H}\co (\Pi_{H}^{L}\ot \Pi_{H}^{L}).$$

\end{proof}

\section{Galois extensions associated to weak Hopf quasigroups}

In this section we introduce the notion of Galois extension (with normal basis) associated to a weak Hopf quasigroup that generalizes the one defined  for Hopf algebras in \cite{KT} and for weak Hopf algebras in \cite{AFG2}. Moreover, if we consider that $\varepsilon_H$ and $\delta_H$ are  morphisms of unital magmas, $H$ is a Hopf quasigroup and we get a definition of Galois (with normal basis) extension associated to a Hopf quasigroup.

\begin{definition}
\label{H-comodulomagma}
{\rm  Let $H$ be a weak Hopf quasigroup
and let $(A, \rho_{A})$ be a unital magma (monoid), which is also a right
$H$-comodule (i.e., $(A\ot \varepsilon_{H})\co \rho_{A}=id_{A}$, $(\rho_{A}\ot H)\co \rho_{A}=(A\ot \delta_{H})\co \rho_{A}$), such that 
\begin{equation}
\label{chmagma}
\mu_{A\ot H}\co (\rho_{A}\ot
\rho_{A})=\rho_{A}\co \mu_{A}.
\end{equation}
 We will say that $A$ is a right $H$-comodule
magma (monoid) if any of the following equivalent conditions hold:

\begin{itemize}
\item[(b1)]$(\rho_{A}\ot H)\co \rho_{A}\co
\eta_{A}=(A\ot (\mu_{H}\co c_{H,H})\ot H)\co
((\rho_{A}\co \eta_{A})\ot (\delta_{H}\co \eta_{H})). $
\item[(b2)]$(\rho_{A}\ot H)\co \rho_{A}\co
\eta_{A}=(A\ot \mu_{H}\ot H)\co ((\rho_{A}\co \eta_{A})\ot (\delta_{H}\co \eta_{H})). $
\item[(b3)]$(A\ot \overline{\Pi}_{H}^{R})\co
\rho_{A}=(\mu_{A}\ot H)\co (A\ot (\rho_{A}\co \eta_{A})).$
\item[(b4)]$(A\ot \Pi_{H}^{L})\co \rho_{A}= ((\mu_{A}\co
c_{A,A})\ot H)\co (A\ot (\rho_{A}\co \eta_{A})).$
\item[(b5)]$(A\ot \overline{\Pi}_{H}^{R})\co \rho_{A}\co
\eta_{A}=\rho_{A}\co\eta_{A}.$
\item[(b6)]$ (A\ot \Pi_{H}^{L})\co \rho_{A}\co
\eta_{A}=\rho_{A}\co\eta_{A}.$
\end{itemize}

This definition is similar to the notion of right $H$-comodule monoid  in the weak Hopf algebra setting and the proof for the equivalence of (b1)-(b6) is the same.  

Note that, if $H$ is a Hopf quasigroup  and $(A, \rho_{A})$ is a unital magma, which is also a right
$H$-comodule, we will say that $A$ is a right $H$-comodule magma if it satisfies (\ref{chmagma}) and $\eta_{H}\ot \eta_{A}=\rho_{A}\co \eta_{A}$. In this case (b1)-(b6) trivialize.

}
\end{definition}

\begin{example}
\label{Hescomodulomagma}
{\rm Let $H$ be a weak Hopf quasigroup. Then $(H, \delta_H)$ is a right $H$-comodule magma.
}
\end{example}

\begin{definition}
\label{coinvariantesA}
{\rm  Let $H$ be a weak Hopf quasigroup and let $(A, \rho_A)$ be a right $H$-comodule
magma. We denote by $A^{co H}$ the equalizer of the morphisms $\rho_A$ and $(A\ot \Pi_{H}^{L})\co \rho_A$
(equivalently, $\rho_A$ and $(A\ot \overline{\Pi}_{H}^{R})\co \rho_A$) and by $i_A$ the injection of $A^{co H}$ in $A$.

The triple $(A^{co H}, \eta_{A^{co H}}, \mu_{A^{co H}})$ is a unital magma (the  submagma of coinvariants of $A$), where $\eta_{A^{co H}}:K\rightarrow A^{co H}$, $\mu_{A^{co H}}:A^{co H}\ot A^{co H}\rightarrow A^{co H}$ are the factorizations of the morphisms $\eta_A$ and $\mu_A\co (i_A\ot i_A)$ through $i_{A}$, respectively. Indeed, by  (b6) of Definition \ref{H-comodulomagma} we have that  $ (A\ot \Pi_{H}^{L})\co \rho_{A}\co
\eta_{A}=\rho_{A}\co\eta_{A}.$ As a consequence, there exists a unique morphism $\eta_{A^{co H}}:K\rightarrow A^{co H}$ such that 
\begin{equation}
\label{eta-coinv}
\eta_{A}=i_{A}\co \eta_{A^{co H}}.
\end{equation}
On the other hand, using (\ref{chmagma}), (b6) of Definition \ref{H-comodulomagma}  and (\ref{pi-mu-pi-pi}) we obtain 
\begin{itemize}
\item[ ]$\hspace{0.38cm} \rho_A\co \mu_A\co (i_A\ot i_A)$
\item[ ]$=\mu_{A\ot H}\co ((\rho_A\co i_A)\ot (\rho_A\co i_A))$
\item[ ]$=(\mu_{A}\ot (\mu_{H}\co (\Pi_{H}^{L}\ot \Pi_{H}^{L})))\co (A\ot c_{H,A}\ot H)\co ((\rho_A\co i_A)\ot (\rho_A\co i_A))$
\item[ ]$=(\mu_{A}\ot (\Pi_{H}^{L}\co \mu_{H}))\co (A\ot c_{H,A}\ot H)\co ((\rho_A\co i_A)\ot (\rho_A\co i_A))$
\item[ ]$=(A\ot \Pi_{H}^{L})\co \rho_A\co \mu_A\co (i_A\ot i_A).$
\end{itemize}
Therefore,  there exists a unique morphism  $\mu_{A^{co H}}:A^{co H}\ot A^{co H}\rightarrow A^{co H}$  satisfying 
\begin{equation}
\label{mu-coinv}
\mu_A\co (i_A\ot i_A)=i_{A}\co \mu_{A^{co H}}.
\end{equation}

}
\end{definition}

\begin{lemma}
\label{igualdadesmurho}
Let $H$ be a weak Hopf quasigroup and let $(A, \rho_A)$ be a right $H$-comodule magma. The following equalities hold:
\begin{equation}
\label{muArhoA-1}
\rho_A\co \mu_A\co (i_A\ot A)=(\mu_A\ot H)\co (i_A\ot \rho_A),
\end{equation}
\begin{equation}
\label{muArhoA-2}
\rho_A\co \mu_A\co (A\ot i_A)=(\mu_A\ot H)\co (A\ot c_{H,A})\co (\rho_A\ot i_A),
\end{equation}
\begin{equation}
\label{muArhoA-22}
(\mu_{A}\ot (\mu_{H}\co (H\ot \Pi_{H}^{L})))\co (A\ot c_{H,A}\ot H)\co (\rho_{A}\ot \rho_{A})=(\mu_A\ot H)\co (A\ot c_{H,A})\co (\rho_A\ot A).
\end{equation}

\end{lemma}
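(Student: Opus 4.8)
The plan is to prove each of the three identities of Lemma~\ref{igualdadesmurho} by the same strategy that was used for the auxiliary formulas (\ref{aux-1-monoid-hl}) and (\ref{aux-2-monoid-hl}) in the proof of Proposition~\ref{monoid-hl}: start from $\rho_{A}\co \mu_{A}$, rewrite it with the comodule-magma condition (\ref{chmagma}) as $\mu_{A\ot H}\co(\rho_{A}\ot\rho_{A})$, and then exploit the factorization of $i_{A}$ through the equalizer defining $A^{co H}$ together with the characterizations (b1)--(b6) of Definition~\ref{H-comodulomagma} to simplify one of the two tensor factors. Concretely, for (\ref{muArhoA-1}) I would note that, since $i_{A}$ equalizes $\rho_{A}$ and $(A\ot\overline{\Pi}_{H}^{R})\co\rho_{A}$, we have $\rho_{A}\co i_{A}=(A\ot\overline{\Pi}_{H}^{R})\co\rho_{A}\co i_{A}$; inserting this into $\mu_{A\ot H}\co((\rho_{A}\co i_{A})\ot\rho_{A})$, expanding $\mu_{A\ot H}$, and using the formula (\ref{mu-pi-r-var}) for $\mu_{H}\co(\overline{\Pi}_{H}^{R}\ot H)$ together with coassociativity of $\delta_{H}$ and the counit property, the $H$-part collapses exactly as in the computation of (\ref{aux-1-monoid-hl}), yielding $(\mu_{A}\ot H)\co(i_{A}\ot\rho_{A})$.

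For (\ref{muArhoA-2}) the argument is the mirror image: now $i_{A}$ sits in the second factor, so I use $\rho_{A}\co i_{A}=(A\ot\Pi_{H}^{L})\co\rho_{A}\co i_{A}$ (the other description of the equalizer), expand $\mu_{A\ot H}\co(\rho_{A}\ot(\rho_{A}\co i_{A}))$, and apply (\ref{mu-pi-l}) — the identity $\mu_{H}\co(H\ot\Pi_{H}^{L})=((\varepsilon_{H}\co\mu_{H})\ot H)\co(H\ot c_{H,H})\co(\delta_{H}\ot H)$ — exactly as in the derivation of (\ref{aux-2-monoid-hl}); naturalness of $c$, coassociativity, and the counit axiom then produce the symmetry $c_{H,A}$ and the claimed right-hand side $(\mu_{A}\ot H)\co(A\ot c_{H,A})\co(\rho_{A}\ot i_{A})$. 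Identity (\ref{muArhoA-22}) is the ``unrestricted'' version of (\ref{muArhoA-2}): instead of invoking the equalizer, one simply substitutes the explicit formula (\ref{mu-pi-l}) for $\mu_{H}\co(H\ot\Pi_{H}^{L})$ into the left-hand side, unpacks $\mu_{A\ot H}$ via $\mu_{A\ot H}=(\mu_{A}\ot\mu_{H})\co(A\ot c_{H,A}\ot H)$, and then uses coassociativity of $\delta_{H}$, axiom (a1) of Definition~\ref{Weak-Hopf-quasigroup}, naturalness of $c$, and the counit property to cancel the $\varepsilon_{H}\co\mu_{H}$ against one copy of $\delta_{H}$, leaving precisely $(\mu_{A}\ot H)\co(A\ot c_{H,A})\co(\rho_{A}\ot A)$.

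The only genuine subtlety I anticipate is bookkeeping: keeping track of which of the two equalizer descriptions ($\Pi_{H}^{L}$ vs.\ $\overline{\Pi}_{H}^{R}$) to use on which factor, and making sure the symmetry morphisms $c$ are threaded through correctly so that the ``wrong-side'' multiplication in $\mu_{A\ot H}$ lands in the right place. There is no deep obstacle — once (\ref{muArhoA-22}) is established, one may also obtain (\ref{muArhoA-2}) from it by precomposing with $A\ot i_{A}$ and using (b4) of Definition~\ref{H-comodulomagma} (which says $(A\ot\Pi_{H}^{L})\co\rho_{A}=((\mu_{A}\co c_{A,A})\ot H)\co(A\ot(\rho_{A}\co\eta_{A}))$ on the image of $i_{A}$, i.e.\ $(A\ot\Pi_{H}^{L})\co\rho_{A}\co i_{A}=\rho_{A}\co i_{A}$), so the three statements are tightly interrelated and essentially one computation done three times with minor variations.
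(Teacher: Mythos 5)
Your proposal is correct and follows essentially the same route as the paper: rewrite $\rho_A\co\mu_A$ via (\ref{chmagma}), use the equalizer property of $i_A$ (with $\overline{\Pi}_{H}^{R}$ for (\ref{muArhoA-1}) and $\Pi_{H}^{L}$ for (\ref{muArhoA-2})), substitute (\ref{mu-pi-r-var}) resp.\ (\ref{mu-pi-l}), and collapse with the comodule and counit axioms; for (\ref{muArhoA-22}) the paper likewise substitutes (\ref{mu-pi-l}) and then uses the comodule condition, naturalness of $c$, (\ref{chmagma}) and the counit. The only nitpick is that the final cancellation in (\ref{muArhoA-22}) really uses (\ref{chmagma}) (to turn $(A\ot\varepsilon_H)\co\mu_{A\ot H}\co(\rho_A\ot\rho_A)$ into $\mu_A$) rather than (a1), but this does not affect the validity of your argument.
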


\begin{proof}
The first equality follows because $A$ is a right $H$-comodule magma, the properties of 
the equalizer $i_A$, (\ref{mu-pi-r-var}) and the naturalness of $c$. Indeed,

\begin{itemize}

\item[ ]$\hspace{0.38cm} \rho_A\co \mu_A\co (i_A\ot A)$
\item[ ]$=\mu_{A\ot H}\co ((\rho_A\co i_A)\ot \rho_A)$
\item[ ]$=\mu_{A\ot H}\co (((A\ot \overline{\Pi}_{H}^{R})\co\rho_A\co i_A)\ot \rho_A)$
\item[ ]$=(\mu_A\ot (\varepsilon_H\co \mu_H)\ot H)\co (A\ot c_{H,A}\ot \delta_H)\co ((\rho_A\co i_A)\ot \rho_A)$
\item[ ]$=(((A\ot \varepsilon_H)\co \rho_A)\ot H)\co (\mu_A\ot H)\co (i_A\ot \rho_A)$
\item[ ]$=(\mu_A\ot H)\co (i_A\ot \rho_A).$

\end{itemize}

In a similar way, but using (\ref{mu-pi-l}), we get (\ref{muArhoA-2}):
\begin{itemize}
\item[ ]$\hspace{0.38cm} \rho_A\co \mu_A\co (A\ot i_A)$
\item[ ]$=\mu_{A\ot H}\co (\rho_A\ot (\rho_A\co i_A))$
\item[ ]$=\mu_{A\ot H}\co (\rho_A\ot ((A\ot \Pi_{H}^{L})\co\rho_A\co i_A))$
\item[ ]$=(\mu_A\ot (((\varepsilon_H\co \mu_H)\ot H)\co (H\ot c_{H,H})\co (\delta_H\ot H)))\co (A\ot c_{H,A}\ot H)\co (\rho_A\ot (\rho_A\co i_A))$
\item[ ]$=(((A\ot \varepsilon_H)\co \rho_A)\ot H)\co (\mu_A\ot H)\co (A\ot c_{H,A})\co (\rho_A\ot i_A)$
\item[ ]$=(\mu_A\ot H)\co (A\ot c_{H,A})\co (\rho_A\ot i_A).$
\end{itemize}

Finally, 
\begin{itemize}
\item[ ]$\hspace{0.38cm} (\mu_{A}\ot (\mu_{H}\co (H\ot \Pi_{H}^{L})))\co (A\ot c_{H,A}\ot H)\co (\rho_{A}\ot \rho_{A})$
\item[ ]$=  (\mu_{A}\ot (((\varepsilon_{H}\co \mu_{H})\ot H)\co (H\ot c_{H,H})\co (\delta_{H}\ot H)))\co (A\ot c_{H,A}\ot H)\co (\rho_{A}\ot \rho_{A}) $
\item[ ]$= (((A\ot \varepsilon_{H})\co \mu_{A\ot H}\co (\rho_{A}\ot \rho_{A}))\ot H)\co (A\ot c_{H,A})\co (\rho_{A}\ot A) $
\item[ ]$= (((A\ot \varepsilon_{H})\co \rho_{A}\co \mu_{A})\ot H)\co (A\ot c_{H,A})\co (\rho_{A}\ot A) $
\item[ ]$= (\mu_A\ot H)\co (A\ot c_{H,A})\co (\rho_A\ot A), $
\end{itemize}
where the first equality follows by (\ref{mu-pi-l}), the second one follows by the comodule condition of $A$ and  the naturalness of $c$, the third  one is a consequence of (\ref{chmagma}) and the last one relies on the counit properties. 
Therefore,  (\ref{muArhoA-22}) holds and the proof is complete.
\end{proof}

\begin{remark}
{\rm It is not difficult to see that the coinvariant submagma $H^{co H}$ of the right $H$-comodule magma $(H, \delta_H)$ is  $H_L$. Moreover in this case the equations (\ref{muArhoA-1}) and (\ref{muArhoA-2}) are (\ref{aux-1-monoid-hl}) and (\ref{aux-2-monoid-hl}) respectively.
}
\end{remark}

\begin{proposition}
\label{idempotentenabla}
Let $H$ be a weak Hopf quasigroup and let $(A, \rho_A)$ be a right $H$-comodule magma. The morphism $\nabla_A:A\ot H\rightarrow A\ot H$, defined as 
$$\nabla_A=\mu_{A\ot H}\co (A\ot H\ot (\rho_A\co \eta_A)),$$
is idempotent and it is a right $H$-comodule  morphism for $\rho_{A\ot H}=A\ot \delta_H$. Moreover, if $(A, \rho_A)$ is a right $H$-comodule magma, it satisfies that
\begin{equation}
\label{nablaAmodulo} 
\nabla_A\co (\mu_A\ot H)=(\mu_A\ot H)\co (A\ot \nabla_A).
\end{equation}
As a consequence, there exist an object $A\square H$ and morphisms $i_{A\ot H}$ and $p_{A\ot H}$ such that $\nabla_A=i_{A\ot H}\co p_{A\ot H}$ and $id_{A\square H}=p_{A\ot H}\co i_{A\ot H}$.
\end{proposition}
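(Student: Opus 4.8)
The plan is to regard $\nabla_{A}$ as right multiplication, in the unital magma $A\ot H$, by the morphism $\omega:=\rho_{A}\co\eta_{A}\colon K\to A\ot H$, and to organise the argument so that every step which would be a use of associativity in the weak Hopf algebra case is instead pushed into the object $H$, where Proposition~\ref{monoid-hl} and the identities of Section~1 are available, or reduced to an instance of (\ref{chmagma}) and the comodule magma axioms (b1)--(b6). Two preliminary observations make this bookkeeping clean. First, from (\ref{chmagma}) together with unitality of $\mu_{A}$ and $\mu_{H}$ one gets $\mu_{A\ot H}\co(\omega\ot\omega)=\omega$. Second, using (b3) of Definition~\ref{H-comodulomagma} one obtains the alternative description
\[
\nabla_{A}=(A\ot(\mu_{H}\co c_{H,H}\co(\overline{\Pi}_{H}^{R}\ot H)))\co(\rho_{A}\ot H),
\]
that is, $\nabla_{A}$ sends $a\ot h$ to $a_{(0)}\ot(h\cdot\overline{\Pi}_{H}^{R}(a_{(1)}))$ in Sweedler-type notation. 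Finally, (b1)/(b2) record the ``twisted coassociativity'' $(A\ot\delta_{H})\co\omega=(A\ot\mu_{H}\ot H)\co(\omega\ot(\delta_{H}\co\eta_{H}))$.

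I would first check that $\nabla_{A}$ is a right $H$-comodule morphism for $\rho_{A\ot H}=A\ot\delta_{H}$, i.e., $(A\ot\delta_{H})\co\nabla_{A}=(\nabla_{A}\ot H)\co(A\ot\delta_{H})$: starting from the defining formula for $\nabla_{A}$, push $\delta_{H}$ through $\mu_{H}$ by (a1) of Definition~\ref{Weak-Hopf-quasigroup}, replace $(A\ot\delta_{H})\co\omega$ using (b2), and collapse the result by the counit axioms and naturality of $c$. The module identity (\ref{nablaAmodulo}) I would get by expanding both sides and using that $\rho_{A}$ is multiplicative, (\ref{chmagma}), and the comodule magma axioms (b3)/(b4); concretely, via the reformulation above it reduces to the equality $(aa')_{(0)}\ot\overline{\Pi}_{H}^{R}((aa')_{(1)})=(a\cdot a'_{(0)})\ot\overline{\Pi}_{H}^{R}(a'_{(1)})$, which follows from (b3) together with the associativity identities (\ref{mu-assoc-1})--(\ref{mu-assoc-4}) used to reassociate the $A$-leg.

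The heart of the proof is idempotency. Writing $\nabla_{A}\co\nabla_{A}=\mu_{A\ot H}\co(\nabla_{A}\ot\omega)$, applying the reformulation of $\nabla_{A}$ and using coassociativity of the coaction, the claim $\nabla_{A}\co\nabla_{A}=\nabla_{A}$ collapses to a single identity inside $H$, namely $(h\cdot\overline{\Pi}_{H}^{R}(x_{(2)}))\cdot\overline{\Pi}_{H}^{R}(x_{(1)})=h\cdot\overline{\Pi}_{H}^{R}(x)$ for all $x$, with $\delta_{H}(x)=x_{(1)}\ot x_{(2)}$. Here the key point is that, by (\ref{pi-composition-1})--(\ref{pi-composition-2}), the image of $\overline{\Pi}_{H}^{R}$ coincides with $H_{L}=Im(\Pi_{H}^{L})$, so both $\overline{\Pi}_{H}^{R}(x_{(i)})$ factor through $i_{L}$; Proposition~\ref{monoid-hl}, specifically (\ref{monoid-hl-3}), then permits the reassociation $(h\cdot u)\cdot v=h\cdot(uv)$ for $u,v$ lying in $H_{L}$, and afterwards (\ref{delta-pi-r-var}), (\ref{mu-pi-r-var}) and the idempotency of $\overline{\Pi}_{H}^{R}$ reduce $\overline{\Pi}_{H}^{R}(x_{(2)})\cdot\overline{\Pi}_{H}^{R}(x_{(1)})$ to $\overline{\Pi}_{H}^{R}(x)$. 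Once $\nabla_{A}$ is known to be idempotent, the splitting $\nabla_{A}=i_{A\ot H}\co p_{A\ot H}$ with $p_{A\ot H}\co i_{A\ot H}=id_{A\square H}$ is immediate from the standing hypothesis that ${\mathcal C}$ has equalizers and coequalizers, so that idempotents split.

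The main obstacle, recurring at every step, is the absence of associativity for $\mu_{A}$ and $\mu_{H}$: one cannot reassociate freely, and the whole argument is organised so that each reassociation is either an instance of (\ref{chmagma})/(b1)--(b6), or an identity that genuinely lives in $H$ and is covered by the weak Hopf quasigroup axioms (a4-4)--(a4-7), the derived identities (\ref{mu-pi-l})--(\ref{2-mu-delta-pi-r}), and the monoid structure of $H_{L}$ from Proposition~\ref{monoid-hl}.
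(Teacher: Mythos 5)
Your overall architecture is the same as the paper's (the reformulation of $\nabla_A$ via (b3), the three separate verifications, and the splitting of the idempotent), and the identities you reduce each claim to are in fact correct. The genuine gap is in your justification of (\ref{nablaAmodulo}). You correctly reduce it to $(aa')_{(0)}\ot\overline{\Pi}_{H}^{R}((aa')_{(1)})=(a\cdot a'_{(0)})\ot\overline{\Pi}_{H}^{R}(a'_{(1)})$, but you then propose to prove this "from (b3) together with the associativity identities (\ref{mu-assoc-1})--(\ref{mu-assoc-4}) used to reassociate the $A$-leg". Those four identities live entirely in $H$ (they involve $\mu_{H}$, $\Pi_{H}^{L}$, $\Pi_{H}^{R}$, $\lambda_{H}$); they say nothing about $\mu_{A}$, and $A$ is only a unital magma. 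Worse, applying (b3) to both sides converts your target identity into $\mu_A\co(\mu_A\ot A)\co(A\ot A\ot \omega)=\mu_A\co(A\ot\mu_A)\co(A\ot A\ot\omega)$ (with $\omega$ the $A$-leg of $\rho_A\co\eta_A$), i.e.\ precisely the reassociation in $A$ you wanted to use as a tool -- the argument is circular. The correct route never reassociates in $A$: one uses (\ref{chmagma}) to rewrite $\rho_A\co\mu_A$ as $\mu_{A\ot H}\co(\rho_A\ot\rho_A)$, so that the offending triple product is transported into $H$ under $\varepsilon_{H}\co\mu_{H}$, where (a2) of Definition \ref{Weak-Hopf-quasigroup} supplies the reassociation, and then (\ref{delta-pi-r-var}), the comodule condition and the counit finish the computation. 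This is exactly what the paper does.

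A secondary, less serious imprecision occurs in the idempotency step. Your reduction to $(h\cdot\overline{\Pi}_{H}^{R}(x_{2}))\cdot\overline{\Pi}_{H}^{R}(x_{1})=h\cdot\overline{\Pi}_{H}^{R}(x)$ is correct, and the use of $\Pi_{H}^{L}\co\overline{\Pi}_{H}^{R}=\overline{\Pi}_{H}^{R}$ together with (\ref{monoid-hl-3}) to reassociate is legitimate (this is a genuinely different, and rather elegant, way to organise the step compared with the paper, which instead works through the convolution identity $\Pi_{H}^{R}\ast\Pi_{H}^{R}=\Pi_{H}^{R}$ obtained from (a4-7) and (a4-3)). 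However, the remaining identity $\overline{\Pi}_{H}^{R}(x_{2})\cdot\overline{\Pi}_{H}^{R}(x_{1})=\overline{\Pi}_{H}^{R}(x)$ does not follow from (\ref{delta-pi-r-var}), (\ref{mu-pi-r-var}) and idempotency of $\overline{\Pi}_{H}^{R}$ alone: (\ref{delta-pi-r-var}) brings you to $\mu_{H}\co(H\ot\overline{\Pi}_{H}^{R})\co c_{H,H}\co(\mu_{H}\ot H)\co(H\ot(\delta_{H}\co\eta_{H}))$, and (\ref{mu-pi-r-var}) handles $\overline{\Pi}_{H}^{R}$ in the \emph{left} factor of a product, which is not the configuration you have. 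You additionally need (a2) and the fact, coming from (a1) and unitality, that $\mu_{H\ot H}\co((\delta_{H}\co\eta_{H})\ot(\delta_{H}\co\eta_{H}))=\delta_{H}\co\eta_{H}$. With those two ingredients added the identity does close, so this part of your plan is repairable; the (\ref{nablaAmodulo}) step, as written, is not.
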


\begin{proof}
Note that, by (b3) of Definition \ref{H-comodulomagma}, we obtain that 
\begin{equation}
\label{new-exp-nabla}
\nabla_A=(A\ot (\mu_H\co c_{H,H}))\co (((A\ot \overline{\Pi}_{H}^{R})\co \rho_A)\ot H).
\end{equation} 

Then $\nabla_A$ is an idempotent morphism. Indeed:

\begin{itemize}

\item[ ]$\hspace{0.38cm} \nabla_A\co \nabla_A$
\item[ ]$=(A\ot (\mu_H\co (\mu_H\ot H)\co (H\ot (c_{H,H}\co (\overline{\Pi}_{H}^{R}\ot \overline{\Pi}_{H}^{R})\co \delta_H))))\co (A\ot c_{H,H})\co (\rho_A\ot H)$
\item[ ]$=(A\ot (\mu_H\co (\mu_H\ot \overline{\Pi}_{H}^{R})\co (H\ot c_{H,H})))\co (A\ot H\ot ((\mu_H\ot H)\co (H\ot (\delta_H\co \eta_H))))$
\item[ ]$\hspace{0.38cm} \co (A\ot c_{H,H})\co (\rho_A\ot H)$
\item[ ]$=(A\ot (\mu_H\co c_{H,H}))\co (A\ot (\varepsilon_H\co \mu_H\co (\mu_H\ot H))\ot H\ot H)\co (A\ot H\ot H\ot 
(\delta_{H}\co \eta_{H})\ot H)$
\item[ ]$\hspace{0.38cm}\co (\rho_A\ot ((\Pi_{H}^{R}\ot H)\co \delta_H))$

\item[ ]$=(A\ot (\mu_H\co c_{H,H}))\co (A\ot (\varepsilon_H\co \mu_H\co (H\ot \mu_H))\ot H\ot H)\co  (A\ot H\ot H\ot 
(\delta_{H}\co \eta_{H})\ot H)$
\item[ ]$\hspace{0.38cm}\co (\rho_A\ot ((\Pi_{H}^{R}\ot H)\co \delta_H))$

\item[ ]$=(A\ot (\varepsilon_H\co \mu_H)\ot H)\co (\rho_A\ot (\mu_{H\ot H}\co (((\Pi_{H}^{R}\ot H)\co \delta_H)\ot (\delta_H\co \eta_H))))$

\item[ ]$=(A\ot (\varepsilon_H\co \mu_H)\ot H)\co (\rho_A\ot (\Pi_{H}^{R}*\Pi_{H}^{R})\ot H)\co (A\ot \delta_H)$

\item[ ]$=(A\ot (\varepsilon_H\co \mu_H)\ot H)\co (\rho_A\ot \Pi_{H}^{R}\ot H)\co (A\ot \delta_H)$

\item[ ]$=(A\ot \varepsilon_H\ot H)\co (A\ot \mu_{H\ot H})\co (\rho_A\ot H\ot (\delta_{H}\co \eta_H))$

\item[ ]$=\nabla_A.$

\end{itemize}

In the preceding computations, the first equality follows by (\ref{new-exp-nabla}), the naturalness of $c$ and because $A$ is a right $H$-comodule; the second one by (\ref{delta-pi-r-var}) and by the naturalness of $c$. In the third one we use (\ref{delta-pi-r}), the naturalness of $c$ and the definiton of $\overline{\Pi}_{H}^{R}$; the fourth one relies on (a2) of Definition \ref{Weak-Hopf-quasigroup}; the fifth one on the naturalness of $c$; the sixth one on the coassociativity of the coproduct and on (\ref{delta-pi-r}). The seventh equality is a consequence of (a4-7) and (a4-3) of Definition \ref{Weak-Hopf-quasigroup}, the eighth one follows by (\ref{delta-pi-r}) and finally, the last one follows by the naturalness of $c$, the definiton of $\overline{\Pi}_{H}^{R}$ and (\ref{new-exp-nabla}).

Now, using (a1) of Definition \ref{Weak-Hopf-quasigroup}, the condition of right $H$-comodule for $A$ and (b6) of Definition \ref{H-comodulomagma}, and the naturalness of $c$ and (\ref{2-mu-delta-pi-l}), we get that $\nabla_A$ is a right $H$-comodule morphism, i.e. 
\begin{equation}
\label{nabla-comod}
(A\ot \delta_H)\co \nabla_A=(\nabla_A\ot H)\co (A\ot \delta_H).
\end{equation}
Indeed,
\begin{itemize}
\item[ ]$\hspace{0.38cm} (A\ot \delta_H)\co \nabla_A$
\item[ ]$=(\mu_A\ot \mu_{H\ot H})\co (A\ot A\ot \delta_H\ot \delta_H)\co (A\ot c_{H,A}\ot H)\co (A\ot H\ot (\rho_A\co \eta_A))$
\item[ ]$=(\mu_A\ot \mu_{H\ot H})\co (A\ot A\ot \delta_H\ot ((H\ot \Pi_{H}^{L})\co \delta_H))\co (A\ot c_{H,A}\ot H)\co (A\ot H\ot (\rho_A\co \eta_A))$
\item[ ]$=(\nabla_A\ot H)\co (A\ot \delta_H).$

\end{itemize}

Finally, 
\begin{itemize}

\item[ ]$\hspace{0.38cm} \nabla_A\co (\mu_A\ot H)$
\item[ ]$=(\mu_A\ot (\varepsilon_H\co \mu_H\co (\mu_H\ot H))\ot (\mu_H\co c_{H,H}))\co (((A\ot c_{H,A}\ot H)\co (\rho_A\ot \rho_A))\ot (\delta_H\co \eta_H)\ot H)$
\item[ ]$=(\mu_A\ot (\varepsilon_H\co \mu_H\co (H\ot \mu_H))\ot (\mu_H\co c_{H,H}))\co (((A\ot c_{H,A}\ot H)\co (\rho_A\ot \rho_A))\ot (\delta_H\co \eta_H)\ot H)$
\item[ ]$=(\mu_A\ot (\varepsilon_H\co \mu_H)\ot (\mu_H\co c_{H,H}))\co (A\ot c_{H,A}\ot ((H\ot \overline{\Pi}_{H}^{R})\co \delta_H)\ot H)\co (\rho_A\ot \rho_A\ot H)$
\item[ ]$= (A\ot \varepsilon_{H}\ot (\mu_H\co c_{H,H}\co (\overline{\Pi}_{H}^{R}\ot H)))\co ((\mu_{A\ot H}\co (\rho_{A}\ot \rho_{A}))\ot H\ot H)\co (A\ot \rho_{A}\ot H)$
\item[ ]$=(((A\ot \varepsilon_H)\co \rho_A\co \mu_A)\ot H)\co (A\ot \nabla_A)$
\item[ ]$=(\mu_A\ot H)\co (A\ot \nabla_A),$

\end{itemize}
where the first and fifth equalities follow by (\ref{chmagma}) and (\ref{new-exp-nabla}), the second  one by (a2) of Definition \ref{Weak-Hopf-quasigroup} and the third one by (\ref{delta-pi-r-var}). In the fourth equality we used that $A$ is a right $H$-comodule,  and the last one follows by the counit properties.

Therefore, (\ref{nablaAmodulo}) holds and the proof is complete. 

\end{proof}

Note that, by the lack of associativity, for $M=A\ot H$, $\varphi_{M}=\mu_A\ot H$ is not a left $A$-module structure (i.e. $\varphi_{M}\co (\eta_{A}\ot M)=id_{M}$, $\varphi_{M}\co (A\ot \varphi_{M})=\varphi_{M}\co (\mu_{A}\ot M)$). Moreover, if $A=H$, by 
(\ref{delta-pi-r}), we have 
\begin{equation}
\label{nabladeH}
\nabla_H=(\mu_H \ot H)\co (H\ot  \Pi_{H}^{R}\ot H)\co (H\ot \delta_H).
\end{equation}

\begin{lemma}
\label{igualdadesnabla}
Let $H$ be a weak Hopf quasigroup and let $(A, \rho_A)$ be a right $H$-comodule magma. The following equalities hold:
\begin{equation}
\label{nabla-1}
p_{A\ot H}\co (A\ot \mu_H)\co (c_{H,A}\ot H)\co (H\ot (\rho_A\co \eta_A))=p_{A\ot H}\co(\eta_A\ot H),
\end{equation}
\begin{equation}
\label{nabla-2}
 (A\ot (\delta_H\co \mu_H))\co (c_{H,A}\ot H)\co (H\ot (\rho_A\co \eta_A))=
(((A\ot \mu_H)\co (c_{H,A}\ot H)\co (H\ot (\rho_A\co \eta_A)))\ot H)\co \delta_H,
\end{equation}
\begin{equation}
\label{nabla-3}
\nabla_A\co (\mu_A\ot H)\co (A\ot \rho_A)=(\mu_A\ot H)\co (A\ot \rho_A).
\end{equation}

\end{lemma}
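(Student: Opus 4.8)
The plan is to reduce the three identities to two elementary facts about the idempotent morphism $\nabla_A$ of Proposition \ref{idempotentenabla}. Write
$$g=(A\ot \mu_H)\co (c_{H,A}\ot H)\co (H\ot (\rho_A\co \eta_A)):H\rightarrow A\ot H$$
for the morphism appearing on the left-hand side of (\ref{nabla-1}); note also that the left-hand side of (\ref{nabla-2}) is exactly $(A\ot \delta_H)\co g$. The first fact I would establish is the factorization
$$g=\nabla_A\co (\eta_A\ot H).$$
This comes out by unfolding $\nabla_A=\mu_{A\ot H}\co (A\ot H\ot (\rho_A\co \eta_A))$ together with $\mu_{A\ot H}=(\mu_A\ot \mu_H)\co (A\ot c_{H,A}\ot H)$ and collapsing the resulting composite using only the left unit law $\mu_A\co (\eta_A\ot A)=id_A$ and bifunctoriality of $\ot$. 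The second fact is
$$\nabla_A\co \rho_A=\rho_A,$$
which holds because $\nabla_A\co \rho_A=\mu_{A\ot H}\co (\rho_A\ot \rho_A)\co (A\ot \eta_A)$, and this equals $\rho_A\co \mu_A\co (A\ot \eta_A)=\rho_A$ by the comodule magma axiom (\ref{chmagma}) and the right unit law $\mu_A\co (A\ot \eta_A)=id_A$.

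Granting these, the three equalities follow formally. For (\ref{nabla-1}): from $\nabla_A=i_{A\ot H}\co p_{A\ot H}$ and $p_{A\ot H}\co i_{A\ot H}=id_{A\square H}$ one gets $p_{A\ot H}\co \nabla_A=p_{A\ot H}$, hence
$$p_{A\ot H}\co g=p_{A\ot H}\co \nabla_A\co (\eta_A\ot H)=p_{A\ot H}\co (\eta_A\ot H).$$
For (\ref{nabla-3}): by the module-type identity (\ref{nablaAmodulo}) and then the second fact,
$$\nabla_A\co (\mu_A\ot H)\co (A\ot \rho_A)=(\mu_A\ot H)\co (A\ot \nabla_A)\co (A\ot \rho_A)=(\mu_A\ot H)\co (A\ot (\nabla_A\co \rho_A))=(\mu_A\ot H)\co (A\ot \rho_A).$$
For (\ref{nabla-2}): using the factorization, the comodule-morphism identity (\ref{nabla-comod}) for $\nabla_A$, the fact that $\eta_A$ commutes past $\delta_H$, and the factorization once more,
$$(A\ot \delta_H)\co g=(A\ot \delta_H)\co \nabla_A\co (\eta_A\ot H)=(\nabla_A\ot H)\co (A\ot \delta_H)\co (\eta_A\ot H)=((\nabla_A\co (\eta_A\ot H))\ot H)\co \delta_H=(g\ot H)\co \delta_H,$$
which is precisely the right-hand side of (\ref{nabla-2}).

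I do not anticipate a genuine obstacle here; the only mildly delicate point is the bookkeeping with the symmetries $c$ and the positions of the tensor factors when verifying $g=\nabla_A\co (\eta_A\ot H)$ and when moving $\eta_A$ past $A\ot \delta_H$ in the computation for (\ref{nabla-2}). Everything else is formal once the factorization and the relation $\nabla_A\co \rho_A=\rho_A$ are recorded, since it then only invokes properties of $\nabla_A$ already proved in Proposition \ref{idempotentenabla} — that it splits as $i_{A\ot H}\co p_{A\ot H}$, the identity (\ref{nablaAmodulo}) and the identity (\ref{nabla-comod}) — together with the comodule magma axiom (\ref{chmagma}) and the unit laws of $A$.
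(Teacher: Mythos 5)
Your proof is correct, and for (\ref{nabla-1}) and (\ref{nabla-2}) it takes a genuinely shorter route than the paper. The pivot is your factorization $g=\nabla_A\co (\eta_A\ot H)$, which indeed follows from nothing more than $\mu_{A\ot H}=(\mu_A\ot\mu_H)\co (A\ot c_{H,A}\ot H)$, bifunctoriality and the left unit law; once this is recorded, (\ref{nabla-1}) is immediate from $p_{A\ot H}\co \nabla_A=p_{A\ot H}$, and (\ref{nabla-2}) follows by pushing $A\ot\delta_H$ past $\nabla_A$ via the comodule-morphism identity (\ref{nabla-comod}) already established in Proposition \ref{idempotentenabla}. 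The paper instead proves (\ref{nabla-1}) by composing with the split monomorphism $i_{A\ot H}$ and running a seven-step computation showing $\nabla_A\co g=g=\nabla_A\co(\eta_A\ot H)$, which uses (b6) of Definition \ref{H-comodulomagma}, the associativity-type identity (\ref{monoid-hl-2}) for $H_L$ and (\ref{chmagma}); and it proves (\ref{nabla-2}) by a direct computation through (a1), the comodule axiom, (b6) and (\ref{2-mu-delta-pi-l}). Your version makes it transparent that both identities are formal consequences of the structure of $\nabla_A$ (its splitting, its idempotency, and (\ref{nabla-comod})) rather than requiring fresh appeals to the weak Hopf quasigroup axioms; the paper's version is self-contained at those two points but redundant, since its own last line is exactly your factorization, which together with idempotency of $\nabla_A$ already yields $\nabla_A\co g=g$. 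For (\ref{nabla-3}) your argument, via (\ref{nablaAmodulo}) and $\nabla_A\co\rho_A=\rho_A$, coincides with the paper's.
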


\begin{proof}

The equality (\ref{nabla-1}) holds because, composing with $i_{A\ot H}$, we have 
\begin{itemize}
\item[ ]$\hspace{0.38cm} \nabla_A\co (A\ot \mu_{H})\co (c_{H,A}\ot H)\co (H\ot (\rho_{A}\co \eta_{A}))$
\item[ ]$= (A\ot (\mu_{H}\co (\mu_{H}\ot H)))\co (c_{H,A}\ot H\ot H)\co (H\ot \mu_{A}\ot H\ot H)\co (H\ot A\ot c_{H,A}\ot H)$
\item[ ]$\hspace{0.38cm} \co (H\ot (\rho_{A}\co \eta_{A})\ot (\rho_{A}\co \eta_{A}))$
\item[ ]$= (A\ot (\mu_{H}\co ((\mu_{H}\co (H\ot \Pi_{H}^{L}))\ot H)))\co (c_{H,A}\ot H\ot H)\co (H\ot \mu_{A}\ot H\ot H)\co (H\ot A\ot c_{H,A}\ot H)$
\item[ ]$\hspace{0.38cm} \co (H\ot (\rho_{A}\co \eta_{A})\ot (\rho_{A}\co \eta_{A}))$
\item[ ]$= (A\ot (\mu_{H}\co (H\ot (\mu_{H}\co (\Pi_{H}^{L}\ot H)))))\co (c_{H,A}\ot H\ot H)\co (H\ot \mu_{A}\ot H\ot H)\co (H\ot A\ot c_{H,A}\ot H)$
\item[ ]$\hspace{0.38cm} \co (H\ot (\rho_{A}\co \eta_{A})\ot (\rho_{A}\co \eta_{A}))$
\item[ ]$= (A\ot \mu_{H})\co (c_{H,A}\ot H)\co (H\ot (\mu_{A\ot H}\co ((\rho_{A}\co \eta_{A})\ot (\rho_{A}\co \eta_{A})))) $
\item[ ]$=(A\ot \mu_{H})\co (c_{H,A}\ot H)\co (H\ot (\rho_{A}\co \mu_{A}\co (\eta_{A}\ot \eta_{A})))$
\item[ ]$=(A\ot \mu_{H})\co (c_{H,A}\ot H)\co (H\ot  (\rho_{A}\co \eta_{A}))$
\item[ ]$=\nabla_A\co (\eta_{A}\ot H),   $
\end{itemize}
where the first equality follows by the naturalness of $c$, the second one follows by (b6) of Definition \ref{H-comodulomagma}, and the third one follows by (\ref{monoid-hl-2}) and by the naturalness of $c$. In the fourth equality  we used the naturalness of $c$ and (b6) of Definition \ref{H-comodulomagma}. The fifth equality is a consequence of (\ref{chmagma}) and the sixth and seventh ones rely on the properties of the unit of $A$. 

On the other hand, the proof for (\ref{nabla-2}) is the following:
\begin{itemize}
\item[ ]$\hspace{0.38cm} (A\ot (\delta_H\co \mu_H))\co  (c_{H,A}\ot H)\co (H\ot (\rho_{A}\co \eta_{A}))$
\item[ ]$= (A\ot ((\mu_{H}\ot \mu_{H})\co \delta_{H\ot H}))\co (c_{H,A}\ot H)\co (H\ot (\rho_{A}\co \eta_{A})) $
\item[ ]$=  (A\ot (\mu_{H\ot H}\co (\delta_{H}\ot H\ot \Pi_{L}^{H})))\co (c_{H,A}\ot H\ot H)\co (H\ot ((\rho_{A}\ot H)\co \rho_{A}\co \eta_{A}))$
\item[ ]$= (A\ot ((\mu_{H}\ot (\mu_{H}\co (H\ot \Pi_{H}^{L})))\co \delta_{H\ot H}))\co (c_{H,A}\ot H)\co (H\ot (\rho_{A}\co \eta_{A})) $
\item[ ]$=  (A\ot ((\mu_{H}\ot H)\co (H\ot c_{H,H})\co (\delta_{H}\ot H)))\co (c_{H,A}\ot H)\co (H\ot (\rho_{A}\co \eta_{A}))$
\item[ ]$= (((A\ot \mu_H)\co (c_{H,A}\ot H)\co (H\ot (\rho_A\co \eta_A)))\ot H)\co \delta_H.$
\end{itemize}
In these equalities the first one is consequence of (a1) of Definition (\ref{Weak-Hopf-quasigroup}), the second one holds because $A$ is a right $H$-comodule and by (b6) of Definition \ref{H-comodulomagma}. In the third one we applied again that $A$ is a right $H$-comodule, the fourth one follows by (\ref{2-mu-delta-pi-l}) and the last one relies on the naturalness of $c$.

Finally,  (\ref{nabla-3}) is a direct consequence of the equalities (\ref{nablaAmodulo}) and 
\begin{equation}
\label{rho-nabla}
\nabla_A\co \rho_A=\rho_A.
\end{equation}
Note that (\ref{rho-nabla}) holds because, by (\ref{chmagma}) and the unit properties, we have 
$$\nabla_A\co \rho_A =\mu_{A\ot H}\co (\rho_A\ot (\rho_A\co \eta_A))=\rho_A\co \mu_{A}\co (A\ot \eta_{A})=\rho_{A}.$$

\end{proof}

\begin{proposition}
\label{monoidecoinvariantes}
Let $H$ be a weak Hopf quasigroup and let $(A, \rho_A)$ be a right $H$-comodule magma such that 
\begin{equation}
\label{AsubH-2}
\mu_{A}\co (A\ot (\mu_A\co (i_A\ot A)))=\mu_A\co ((\mu_A\co (A\ot i_A))\ot A).
\end{equation}
Then $(A^{co H}, \eta_{A^{co H}}, \mu_{A^{co H}})$ is a monoid.
Moreover the morphism 
$$\overline{\gamma}_A=p_{A\ot H}\co (\mu_A\ot H)\co (A\ot \rho_A):A\ot A\rightarrow A\square H$$ factorizes through the coequalizer diagram

$$
\setlength{\unitlength}{1mm}
\begin{picture}(101.00,10.00)
\put(22.00,8.00){\vector(1,0){40.00}}
\put(22.00,4.00){\vector(1,0){40.00}}
\put(75.00,6.00){\vector(1,0){21.00}}
\put(43.00,11.00){\makebox(0,0)[cc]{$(\mu_{A}\co (A\ot i_A))\ot A$ }}
\put(43.00,0.00){\makebox(0,0)[cc]{$A\ot (\mu_{A}\co (i_A\ot A))$ }}
\put(85.00,9.00){\makebox(0,0)[cc]{$n_{A}$ }}
\put(10.00,6.00){\makebox(0,0)[cc]{$ A\ot A^{co H}\ot A$ }}
\put(70.00,6.00){\makebox(0,0)[cc]{$A\ot A$ }}
\put(105.00,6.00){\makebox(0,0)[cc]{$A\ot_{A^{co H}}A$ }}
\end{picture}
$$
and, if we denote by $\gamma_A$ this factorization, the following equalities:
\begin{equation}
\label{gammarho-1}
(\gamma_A\ot H)\co \rho^{1}_{A\ot_{A^{co H}}A}=(p_{A\ot H}\ot H)\co (A\ot c_{H,H})\co (A\ot (\mu_{H}\co (H\ot \lambda_{H}))\ot H)\co (\rho_A\ot \delta_{H})\co i_{A\ot H}\co \gamma_A,
\end{equation}
\begin{equation}
\label{gammarho-2}
(\gamma_A\ot H)\co \rho^{2}_{A\ot_{A^{co H}}A}=(p_{A\ot H}\ot H)\co(A\ot \delta_H)\co i_{A\ot H}\co \gamma_A,
\end{equation}
hold, where $\rho^{1}_{A\ot_{A^{co H}}A}$ and $\rho^{2}_{A\ot_{A^{co H}}A}$ are the factorizations, through the coequalizer $n_{A}$, of the morphisms 
$(n_{A}\ot H)\co (A\ot c_{H,A})\co (\rho_A\ot A)$ and $(n_{A}\ot H)\co (A\ot \rho_A)$, respectively.

\end{proposition}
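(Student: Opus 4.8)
The plan is to prove the three assertions in turn.

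\emph{(i) $A^{co H}$ is a monoid.} Since $i_{A}$ is a monomorphism, associativity of $\mu_{A^{co H}}$ can be checked after composing with $i_{A}$; using (\ref{mu-coinv}) twice one gets $i_{A}\co\mu_{A^{co H}}\co(A^{co H}\ot\mu_{A^{co H}})=\big(\mu_{A}\co(A\ot(\mu_{A}\co(i_{A}\ot A)))\big)\co(i_{A}\ot A^{co H}\ot i_{A})$ and $i_{A}\co\mu_{A^{co H}}\co(\mu_{A^{co H}}\ot A^{co H})=\big(\mu_{A}\co((\mu_{A}\co(A\ot i_{A}))\ot A)\big)\co(i_{A}\ot A^{co H}\ot i_{A})$, and these coincide by hypothesis (\ref{AsubH-2}).

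\emph{(ii) Factorization of $\overline{\gamma}_{A}$.} It suffices to show that $\overline{\gamma}_{A}$ coequalizes the two legs of the displayed coequalizer. Since $A\ot\rho_{A}$ only affects the last tensor factor, $\overline{\gamma}_{A}\co\big((\mu_{A}\co(A\ot i_{A}))\ot A\big)=p_{A\ot H}\co\big((\mu_{A}\co((\mu_{A}\co(A\ot i_{A}))\ot A))\ot H\big)\co(A\ot A^{co H}\ot\rho_{A})$; now apply (\ref{AsubH-2}) to rewrite the inner morphism as $\mu_{A}\co(A\ot(\mu_{A}\co(i_{A}\ot A)))$, regroup, and use (\ref{muArhoA-1}) in the form $\rho_{A}\co\mu_{A}\co(i_{A}\ot A)=(\mu_{A}\ot H)\co(i_{A}\ot\rho_{A})$ to identify the result with $\overline{\gamma}_{A}\co\big(A\ot(\mu_{A}\co(i_{A}\ot A))\big)$. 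The universal property of $n_{A}$ then produces the unique $\gamma_{A}$ with $\gamma_{A}\co n_{A}=\overline{\gamma}_{A}$.

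\emph{(iii) The compatibility identities.} First, $(n_{A}\ot H)\co(A\ot\rho_{A})$ and $(n_{A}\ot H)\co(A\ot c_{H,A})\co(\rho_{A}\ot A)$ do factor through $n_{A}$: precomposing with the two legs of the coequalizer and using (\ref{muArhoA-1}), (\ref{muArhoA-2}) and the identity $n_{A}\co((\mu_{A}\co(A\ot i_{A}))\ot A)=n_{A}\co(A\ot(\mu_{A}\co(i_{A}\ot A)))$ reduces each to a triviality, which defines $\rho^{1}_{A\ot_{A^{co H}}A}$ and $\rho^{2}_{A\ot_{A^{co H}}A}$. Both (\ref{gammarho-1}) and (\ref{gammarho-2}) are then proved by precomposing with the epimorphism $n_{A}$: on the left one substitutes $\gamma_{A}\co n_{A}=\overline{\gamma}_{A}=p_{A\ot H}\co(\mu_{A}\ot H)\co(A\ot\rho_{A})$ together with the defining equations of $\rho^{1}$, $\rho^{2}$; on the right one substitutes $\gamma_{A}\co n_{A}=\overline{\gamma}_{A}$ and then $i_{A\ot H}\co\overline{\gamma}_{A}=\nabla_{A}\co(\mu_{A}\ot H)\co(A\ot\rho_{A})=(\mu_{A}\ot H)\co(A\ot\rho_{A})$, using $i_{A\ot H}\co p_{A\ot H}=\nabla_{A}$ (Proposition \ref{idempotentenabla}) and (\ref{nabla-3}). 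After this reduction, (\ref{gammarho-2}) follows purely from the comodule coassociativity of $\rho_{A}$ and the interchange $((\mu_{A}\ot H)\ot H)\co(A\ot A\ot\delta_{H})=(A\ot\delta_{H})\co(\mu_{A}\ot H)$.

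\emph{(iv) Main difficulty.} The genuinely laborious point is (\ref{gammarho-1}). After the reduction in (iii) it amounts to an equality of two morphisms $A\ot A\to(A\square H)\ot H$ built from $\mu_{A}$, $\rho_{A}$, $\delta_{H}$, $c$ and the antipode-twisted factor $\mu_{H}\co(H\ot\lambda_{H})$. In the classical Hopf case this collapses at once because $\mu_{H}$ is associative and $\mu_{H}\co(id_{H}\ot\lambda_{H})\co\delta_{H}=\eta_{H}\co\varepsilon_{H}$; in the present setting one must instead invoke (\ref{chmagma}), the comodule axioms, the naturalness of $c$, the defining relation $\Pi_{H}^{L}=id_{H}\ast\lambda_{H}$ and axiom (a4-6) of Definition \ref{Weak-Hopf-quasigroup} to turn the $\mu_{H}\co(H\ot\lambda_{H})$-terms into $\Pi_{H}^{L}$-terms, and then (\ref{mu-pi-l}), (\ref{delta-pi-l}), (\ref{muArhoA-22}) and (\ref{new-exp-nabla}) to check that, once pushed through $p_{A\ot H}$ (equivalently through $\nabla_{A}$), the residual $\Pi_{H}^{L}$-tails cancel. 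Everything else — parts (i), (ii), the well-definedness of $\rho^{1},\rho^{2}$ and identity (\ref{gammarho-2}) — is a routine diagram chase drawing only on Lemmas \ref{igualdadesmurho}, \ref{igualdadesnabla} and Proposition \ref{idempotentenabla}.
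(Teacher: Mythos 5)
Your plan follows the paper's proof essentially step for step: the monoid claim via the mono $i_{A}$ and (\ref{mu-coinv}), the factorization of $\overline{\gamma}_{A}$ via (\ref{muArhoA-1}) and (\ref{AsubH-2}), the well-definedness of $\rho^{1}_{A\ot_{A^{co H}}A}$ and $\rho^{2}_{A\ot_{A^{co H}}A}$ via Lemma \ref{igualdadesmurho}, and (\ref{gammarho-2}) via comodule coassociativity together with $\nabla_{A}\co(\mu_{A}\ot H)\co(A\ot\rho_{A})=(\mu_{A}\ot H)\co(A\ot\rho_{A})$; all of these parts are complete and correct. The one caveat is that for (\ref{gammarho-1}) you give only an outline, whereas this is exactly where the paper does its real work (a ten-step chain of equalities). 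Your outline does name the correct pivot: after substituting $i_{A\ot H}\co\gamma_{A}\co n_{A}=\nabla_{A}\co(\mu_{A}\ot H)\co(A\ot\rho_{A})=(\mu_{A}\ot H)\co(A\ot\rho_{A})$ and expanding $\rho_{A}\co\mu_{A}$ by (\ref{chmagma}), axiom (a4-6) converts $\mu_{H}\co(\mu_{H}\ot\lambda_{H})\co(H\ot\delta_{H})$ into $\mu_{H}\co(H\ot\Pi_{H}^{L})$, and then (\ref{muArhoA-22}) disposes of the $\Pi_{H}^{L}$, landing on $(\overline{\gamma}_{A}\ot H)\co(A\ot c_{H,A})\co(\rho_{A}\ot A)=(\gamma_{A}\ot H)\co\rho^{1}_{A\ot_{A^{co H}}A}\co n_{A}$. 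Two small corrections to your description: the $\Pi_{H}^{L}$-term is killed by (\ref{mu-pi-l}), (\ref{chmagma}) and the counit axiom $(A\ot\varepsilon_{H})\co\rho_{A}=id_{A}$ — i.e.\ already at the level of $A\ot H$, not by "pushing through $p_{A\ot H}$ or $\nabla_{A}$" — and the identities (\ref{delta-pi-l}) and (\ref{new-exp-nabla}) that you list are not actually needed. With the computation for (\ref{gammarho-1}) written out, your argument coincides with the paper's.
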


\begin{proof} Trivially, if (\ref{AsubH-2}) 	holds, the triple $(A^{co H}, \eta_{A^{co H}}, \mu_{A^{co H}})$ is a monoid. On the other hand, consider the coequalizer diagram 

$$
\setlength{\unitlength}{1mm}
\begin{picture}(101.00,10.00)
\put(22.00,8.00){\vector(1,0){40.00}}
\put(22.00,4.00){\vector(1,0){40.00}}
\put(75.00,6.00){\vector(1,0){21.00}}
\put(43.00,11.00){\makebox(0,0)[cc]{$(\mu_{A}\co (A\ot i_A))\ot A$ }}
\put(43.00,0.00){\makebox(0,0)[cc]{$A\ot (\mu_{A}\co (i_A\ot A))$ }}
\put(85.00,9.00){\makebox(0,0)[cc]{$n_{A}$ }}
\put(10.00,6.00){\makebox(0,0)[cc]{$ A\ot A^{co H}\ot A$ }}
\put(70.00,6.00){\makebox(0,0)[cc]{$A\ot A$ }}
\put(105.00,6.00){\makebox(0,0)[cc]{$A\ot_{A^{co H}}A$ }}
\end{picture}
$$
By (\ref{muArhoA-1}) and (\ref{AsubH-2}) we have 
$$(\mu_{A}\ot H)\co (A\ot \rho_{A})\co (A\ot (\mu_{A}\co (i_A\ot A)))=((\mu_{A}\co (A\ot \mu_{A}))\ot H)\co (A\ot i_{A}\ot \rho_{A})=(\mu_{A}\ot H)\co ((\mu_{A}\co (A\ot i_{A}))\ot \rho_{A})$$
and, therefore, there exists a unique morphism such that 
\begin{equation}
\label{can-fact}
\gamma_{A}\co n_{A}=\overline{\gamma}_A.
\end{equation}
Also, by (\ref{muArhoA-1}), (\ref{muArhoA-2}), the naturalness of $c$, and the definition of $n_{A}$,  we have  
$$(n_{A}\ot H)\co (A\ot c_{H,A})\co (\rho_A\ot A) \co ((\mu_{A}\co (A\ot i_A))\ot A)=(n_{A}\ot H)\co (A\ot c_{H,A})\co (\rho_A\ot A) \co (A\ot (\mu_{A}\co (i_A\ot A)))$$
and 
$$(n_{A}\ot H)\co (A\ot \rho_A)\co ((\mu_{A}\co (A\ot i_A))\ot A)=(n_{A}\ot H)\co (A\ot \rho_A) \co (A\ot (\mu_{A}\co (i_A\ot A))).$$ 
Then, there exists unique morphisms $\rho^{1}_{A\ot_{A^{co H}}A}, \rho^{2}_{A\ot_{A^{co H}}A}: A\ot_{A^{co H}}A\rightarrow A\ot_{A^{co H}}A\ot H$ such that 
\begin{equation}
\label{rho-fact-1}
\rho^{1}_{A\ot_{A^{co H}}A}\co n_{A}=(n_{A}\ot H)\co (A\ot c_{H,A})\co (\rho_A\ot A), 
\end{equation}
\begin{equation}
\label{rho-fact-2}
\rho^{2}_{A\ot_{A^{co H}}A}\co n_{A}=(n_{A}\ot H)\co (A\ot \rho_A),
\end{equation}
respectively. 

For $\rho^{1}_{A\ot_{A^{co H}}A}$ the equality (\ref{gammarho-1}) holds because by composing with the coequalizer $n_{A}$,
\begin{itemize}
\item[ ]$\hspace{0.38cm} (p_{A\ot H}\ot H)\co (A\ot c_{H,H})\co (A\ot (\mu_{H}\co (H\ot \lambda_{H}))\ot H)\co (\rho_A\ot \delta_{H})\co i_{A\ot H}\co \gamma_A\co n_{A}$
\item[ ]$= (p_{A\ot H}\ot H)\co (A\ot c_{H,H})\co (A\ot (\mu_{H}\co (H\ot \lambda_{H}))\ot H)\co (\rho_A\ot \delta_{H})\co \nabla_{A}\co (\mu_{A}\ot H)\co (A\ot \rho_{A}) $
\item[ ]$= (p_{A\ot H}\ot H)\co (A\ot c_{H,H})\co (A\ot (\mu_{H}\co (H\ot \lambda_{H}))\ot H)\co ((\rho_A\co \mu_{A})\ot \delta_{H}) \co (A\ot \rho_{A}) $
\item[ ]$= (p_{A\ot H}\ot H)\co (A\ot c_{H,H})\co (A\ot (\mu_{H}\co (H\ot \lambda_{H}))\ot H)\co (( \mu_{A\ot H}\co (\rho_{A}\ot \rho_{A}))\ot \delta_{H})\co  (A\ot \rho_{A}) $
\item[ ]$= (p_{A\ot H}\ot H)\co (A\ot c_{H,H})\co (\mu_{A}\ot (\mu_{H}\co (\mu_{H}\ot \lambda_{H})\co (H\ot \delta_{H}))\ot H)\co (A\ot c_{H,A}\ot \delta_{H})\co (\rho_{A}\ot \rho_{A})  $
\item[ ]$= (p_{A\ot H}\ot H)\co (A\ot c_{H,H})\co (\mu_{A}\ot (\mu_{H}\co (H\ot \Pi_{H}^{L}))\ot H)\co (A\ot c_{H,A}\ot \delta_{H})\co (\rho_{A}\ot \rho_{A})  $
\item[ ]$= (p_{A\ot H}\ot H)\co (A\ot c_{H,H})\co (\mu_{A}\ot (((\varepsilon_{H}\co \mu_{H})\ot H)\co (H\ot c_{H,H})\co (\delta_{H}\ot H))\ot H)\co (A\ot c_{H,A}\ot \delta_{H})$
\item[ ]$\hspace{0.38cm}\co (\rho_{A}\ot \rho_{A})  $
\item[ ]$= (p_{A\ot H}\ot H)\co (((A\ot \varepsilon_{H})\co \mu_{A\ot H}\co (\rho_{A}\ot \rho_{A}))\ot c_{H,H})\co (A\ot c_{H,A}\ot H)\co (\rho_{A}\ot \rho_{A})$
\item[ ]$= (p_{A\ot H}\ot H)\co (((A\ot \varepsilon_{H})\co \rho_A\co \mu_{A})\ot c_{H,H})\co (A\ot c_{H,A}\ot H)\co (\rho_{A}\ot \rho_{A}) $
\item[ ]$=(\overline{\gamma}_A\ot H)\co (A\ot c_{H,A})\co (\rho_{A}\ot A)  $
\item[ ]$=((\gamma_A\co n_{A})\ot H)\co (A\ot c_{H,A})\co (\rho_{A}\ot A)  $
\item[ ]$=(\gamma_A\ot H)\co \rho^{1}_{A\ot_{A^{co H}}A}\co n_{A},$
\end{itemize}
where the first and the tenth equalities follow by (\ref{can-fact}), the second one follows by (\ref{nabla-3}) and the third and eighth ones follow by (\ref{chmagma}). In the fourth identity we used that $A$ is a right $H$-comodule and the coassociativity of $\delta_{H}$. The fifth equality  relies on (a4-6) of Definition \ref{Weak-Hopf-quasigroup} and the sixth one is a consequence of (\ref{mu-pi-l}). In the seventh equality we applied the naturalness of $c$ and the comodule structure of $A$, the ninth one follows by the counit properties and the naturalness of $c$ and the last one follows by (\ref{rho-fact-1}). 

Finally, by (\ref{rho-fact-2}), the comodule structure of $A$ and (\ref{nabla-comod}) we have 
$$(\gamma_A\ot H)\co \rho^{2}_{A\ot_{A^{co H}}A}\co n_{A}=(p_{A\ot H}\ot H)\co(A\ot \delta_H)\co i_{A\ot H}\co \gamma_A\co n_{A},$$
and then (\ref{gammarho-2}) holds.

\end{proof}

\begin{lemma}
\label{morfismosauxiliares}
Let $H$ be a weak Hopf quasigroup and let $(A, \rho_A)$ be a right $H$-comodule magma such that the functor $A\ot -$ preserves coequalizers. Assume that 
\begin{equation}
\label{AsubH-3}
\mu_{A}\co (A\ot (\mu_A\co (A\ot i_A)))=\mu_A\co (\mu_A\ot i_A)).
\end{equation}
Then the morphism $n_{A}\co (\mu_A\ot A)$ factorizes though the coequalizer $A\ot n_{A}$. We will denote by $\varphi_{A\ot_{A^{co H}}A}$ this factorization, i.e., the unique morphism such that
\begin{equation}
\label{varphi}
\varphi_{A\ot_{A^{co H}}A}\co (A\ot n_{A})=n_{A}\co (\mu_A\ot A).
\end{equation}

\end{lemma}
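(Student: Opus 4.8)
The plan is to construct $\varphi_{A\ot_{A^{co H}}A}$ by means of the universal property of a coequalizer. Since the functor $A\ot -$ preserves coequalizers, applying it to the coequalizer diagram defining $n_{A}$ (the one displayed in Proposition \ref{monoidecoinvariantes}) shows that the morphism $A\ot n_{A}\colon A\ot A\ot A\to A\ot(A\ot_{A^{co H}}A)$ is the coequalizer of the parallel pair
\[
u=A\ot\big((\mu_{A}\co(A\ot i_{A}))\ot A\big),\qquad v=A\ot\big(A\ot(\mu_{A}\co(i_{A}\ot A))\big)
\]
from $A\ot A\ot A^{co H}\ot A$ to $A\ot A\ot A$. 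Thus, to obtain a (necessarily unique) morphism $\varphi_{A\ot_{A^{co H}}A}$ with $\varphi_{A\ot_{A^{co H}}A}\co(A\ot n_{A})=n_{A}\co(\mu_{A}\ot A)$, it suffices to prove that $n_{A}\co(\mu_{A}\ot A)$ coequalizes $u$ and $v$, i.e. that $n_{A}\co(\mu_{A}\ot A)\co u=n_{A}\co(\mu_{A}\ot A)\co v$.

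To verify this, first I would simplify each side separately. On the left, $(\mu_{A}\ot A)\co u$ equals $\big(\mu_{A}\co(A\ot(\mu_{A}\co(A\ot i_{A})))\big)\ot A$, which by the hypothesis (\ref{AsubH-3}) coincides with $\big(\mu_{A}\co(\mu_{A}\ot i_{A})\big)\ot A=\big((\mu_{A}\co(A\ot i_{A}))\ot A\big)\co(\mu_{A}\ot A^{co H}\ot A)$. On the right, a direct rewriting yields $(\mu_{A}\ot A)\co v=\big(A\ot(\mu_{A}\co(i_{A}\ot A))\big)\co(\mu_{A}\ot A^{co H}\ot A)$. Hence, composing with $n_{A}$ and using that $n_{A}$ coequalizes its defining pair, namely $n_{A}\co\big((\mu_{A}\co(A\ot i_{A}))\ot A\big)=n_{A}\co\big(A\ot(\mu_{A}\co(i_{A}\ot A))\big)$, we obtain that both $n_{A}\co(\mu_{A}\ot A)\co u$ and $n_{A}\co(\mu_{A}\ot A)\co v$ are equal to $n_{A}\co\big((\mu_{A}\co(A\ot i_{A}))\ot A\big)\co(\mu_{A}\ot A^{co H}\ot A)$. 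The universal property of $A\ot n_{A}$ then produces the factorization, which is precisely (\ref{varphi}).

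I do not expect a real obstacle here; the argument is in essence a short diagram chase. The only point requiring care is that, since $\mu_{A}$ need not be associative, the simplification of $(\mu_{A}\ot A)\co u$ cannot appeal to associativity of the product and must instead use exactly the mixed-associativity hypothesis (\ref{AsubH-3}) (the one involving $i_{A}$); one must also be sure to invoke the preservation of coequalizers by $A\ot -$ in order to present the source of $\varphi_{A\ot_{A^{co H}}A}$ as a coequalizer, so that its universal property can be applied.
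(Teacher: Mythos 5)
Your proof is correct and follows exactly the paper's argument: apply $A\ot -$ to the coequalizer defining $n_A$, use (\ref{AsubH-3}) to rewrite $(\mu_A\ot A)$ composed with the first leg, and invoke the coequalizing property of $n_A$ to get the factorization. The paper leaves these verifications implicit ("the result follows easily by (\ref{AsubH-3}) and by the properties of $n_A$"), and your write-up simply makes them explicit.
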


\begin{proof} If the functor $A\ot -$ preserves coequalizers, we have that 

$$
\setlength{\unitlength}{1mm}
\begin{picture}(120.00,10.00)
\put(19.00,8.00){\vector(1,0){40.00}}
\put(19.00,4.00){\vector(1,0){40.00}}
\put(80.00,6.00){\vector(1,0){21.00}}
\put(41.00,11.00){\makebox(0,0)[cc]{$A\ot (\mu_{A}\co (A\ot i_A))\ot A$ }}
\put(41.00,0.00){\makebox(0,0)[cc]{$A\ot A\ot (\mu_{A}\co (i_A\ot A))$ }}
\put(88.00,9.00){\makebox(0,0)[cc]{$A\ot n_{A}$ }}
\put(3.00,6.00){\makebox(0,0)[cc]{$ A\ot A\ot A^{co H}\ot A$ }}
\put(71.00,6.00){\makebox(0,0)[cc]{$A\ot A\ot A$ }}
\put(118.00,6.00){\makebox(0,0)[cc]{$A\ot A\ot_{A^{co H}}A$ }}
\end{picture}
$$
is a coequalizer diagram, and then the result follows easily by (\ref{AsubH-3}) and by the properties of $n_{A}$.
\end{proof}

Now we introduce the definition of Galois extension associated to a weak Hopf quasigroup.

\begin{definition}
\label{Galois}
{\rm Let $H$ be a weak Hopf quasigroup and let $(A, \rho_A)$ be a right $H$-comodule magma satisfying (\ref{AsubH-2}). We say that $A^{co H}\hookrightarrow A$ is a weak $H$-Galois extension if the morphism $\gamma_A$ is an isomorphism.  

Let $\rho^{2}_{A\ot_{A^{co H}}A}$ be the morphism introduced in  Proposition \ref{monoidecoinvariantes}. The pair  $(A\ot_{A^{co H}}A, \rho^{2}_{A\ot_{A^{co H}}A})$ is a right $H$-comodule  and so is $(A\square H, \rho_{A\square H})$ with 
$$\rho_{A\square H}=(p_{A\ot H}\ot H)\co (A\ot\delta_{H})\co i_{A\ot H}.$$
Then, $\gamma_A$ is a morphism of right $H$-comodules,  because composing with $n_{A}$ and using (\ref{can-fact}), (\ref{nabla-comod}) and (\ref{gammarho-2}), the equality 
$$\rho_{A\square H}\co \gamma_A\co n_{A}= (\gamma_A\ot H)\co \rho^{2}_{A\ot_{A^{co H}}A}\co n_{A}$$
holds and therefore 
\begin{equation}
\label{gamma-comod}
\rho_{A\square H}\co \gamma_A=(\gamma_A\ot H)\co \rho^{2}_{A\ot_{A^{co H}}A}.
\end{equation}
On the other hand, if $\varphi_{A\square H}= p_{A\ot H}\co (\mu_{A}\ot H)\co (A\ot i_{A\ot H})$, by (\ref{can-fact}) and (\ref{nablaAmodulo}),  we obtain that $\gamma_{A}$ is almost lineal, i.e., 
\begin{equation}
\label{gamma-almost-lineal}
\varphi_{A\square H} \co (A\ot (\gamma_A\co n_{A}\co (\eta_{A}\ot A)))=\gamma_A\co n_{A}.
\end{equation}

If $A^{co H}\hookrightarrow A$ is a  weak $H$-Galois extension such that the functor $A\ot -$ preserves coequalizers, and  the equality (\ref{AsubH-3}) holds, we will say that  $\gamma_A^{-1}$ is almost lineal if it satisfies that
\begin{equation}
\label{almostlineal}
\gamma_A^{-1}\co p_{A\ot H}=\varphi_{A\ot_{A^{co H}}A}\co (A\ot (\gamma_A^{-1}\co p_{A\ot H}\co (\eta_A\ot H))). 
\end{equation}

}
\end{definition}

\begin{definition}
\label{basenormal}
{\rm
Let $A^{co H}\hookrightarrow A$ be a weak $H$-Galois extension. We will say that $A^{co H}\hookrightarrow A$ is a weak $H$-Galois with normal basis if there exists an idempotent morphism of left $A^{co H}$-modules ($\varphi_{A^{co H}\ot H}=\mu_{A^{co H}}\ot H$) and right $H$-comodules ($\rho_{A^{co H}\ot H}=A^{co H}\ot \delta_H$), 
$$\Omega_A:A^{co H}\ot H\rightarrow A^{co H}\ot H,$$ 
and an isomorphism of left $A^{co H}$-modules and right $H$-comodules 
$$b_A:A\rightarrow A^{co H}\times H,$$ 
where $A^{co H}\times H$ is the image of $\Omega_A$ and 
$\varphi_{A^{co H}\times H}=r_A\co (\mu_{A^{co H}}\ot H)\co (A^{co H}\ot s_A)$, $\rho_{A^{co H}\times H}=(r_A\ot H)\co (A^{co H}\ot \delta_H)\co s_A$, being $s_A:A^{co H}\times H\rightarrow A^{co H}\ot H$ and $r_A:A^{co H}\ot H\rightarrow A^{co H}\times H$ the morphisms such that $s_A\co r_A=\Omega_A$ and $r_A\co s_A=id_{A^{co H}\times H}$.

Note that by Proposition \ref{monoidecoinvariantes}, $A^{co H}$ is a monoid and then $\varphi_{A^{co H}\ot H}$ is a left $A^{co H}$-module structure for $A^{co H}\ot H$.
}
\end{definition}

\begin{remark}
\label{Galoiswhaandhq}
{\rm In the weak Hopf algebra setting, Definition \ref{Galois} is a generalization of the notion of weak $H$-Galois extension (with normal basis) given in \cite{AFG2}.

Recall that if $H$ is a weak Hopf algebra and $A$ a right $H$-comodule monoid, the equality (\ref{almostlineal}) is always true. Indeed, by the definitions of $\varphi_{A\ot_{A^{co H}}A}$ and $\gamma_A$ and taking into account that $A$ is a monoid and (\ref{nabla-3}),

\begin{itemize}
\item[ ]$\hspace{0.38cm} \gamma_A\co \varphi_{A\ot_{A^{co H}}A}\co (A\ot n_{A})$
\item[ ]$=\gamma_A\co  n_{A}\co (\mu_A\ot A)$
\item[ ]$=p_{A\ot H}\co (\mu_A\ot H)\co (A\ot \rho_A)\co (\mu_A\ot A)$
\item[ ]$=p_{A\ot H}\co (\mu_A\ot H)\co (A\ot (\nabla_A\co (\mu_A\ot H)\co (A\ot \rho_A)))$
\item[ ]$=p_{A\ot H}\co (\mu_A\ot H)\co (A\ot (i_{A\ot H}\co \gamma_A\co n_{A})),$
\end{itemize}

and then $\gamma_A\co \varphi_{A\ot_{A^{co H}}A}=p_{A\ot H}\co (\mu_A\ot H)\co (A\ot (i_{A\ot H}\co \gamma_A))$. Therefore
\begin{itemize}
\item[ ]$\hspace{0.38cm} \varphi_{A\ot_{A^{co H}}A}\co (A\ot (\gamma_A^{-1}\co p_{A\ot H}\co (\eta_A\ot H)))$
\item[ ]$=\gamma_A^{-1}\co \gamma_A\co \varphi_{A\ot_{A^{co H}}A}\co (A\ot (\gamma_A^{-1}\co p_{A\ot H}\co (\eta_A\ot H)))$
\item[ ]$=\gamma_A^{-1}\co p_{A\ot H}\co (\mu_A\ot H)\co (A\ot (i_{A\ot H}\co \gamma_A\co \gamma_A^{-1}\co p_{A\ot H}\co (\eta_A\ot H)))$
\item[ ]$=\gamma_A^{-1}\co p_{A\ot H}\co (\mu_{A}\ot H)\co  (A\ot (\nabla_A\co (\eta_A\ot H)))$
\item[ ]$=\gamma_A^{-1}\co p_{A\ot H},$
\end{itemize}
and $\gamma_A^{-1}$ is almost lineal.

On the other hand, if $H$ is a Hopf quasigroup, $\nabla_A=id_{A\ot H}$ and then $\gamma_A$ is the factorization through the coequalizer of the morphism $(\mu_A\ot H)\co (A\ot \rho_A)$. Then, for this algebraic structure, Definition \ref{Galois} is the notion of $H$-Galois extension for Hopf quasigroups (see \cite{AFG-3}).
Also, $\varphi_{A\square H}=\mu_{A}\ot H$, and, as a consequence, the condition of almost lineal for $\gamma_{A}$ is 
\begin{equation}
\label{gamma-almostlinealquasigroup}
(\mu_{A}\ot H)\co (A\ot (\gamma_A\co n_{A}\co (\eta_{A}\ot A)))=\gamma_A\co n_{A}.
\end{equation}
Now condition almost lineal for $\gamma_A^{-1}$ says that the equality
\begin{equation}
\label{almostlinealquasigroup}
\gamma_A^{-1}=\varphi_{A\ot_{A^{co H}}A}\co (A\ot (\gamma_A^{-1}\co (\eta_A\ot H))) 
\end{equation}
holds.
}
\end{remark}

\begin{example}
\label{HesGalois}
{\rm Let $H$ be a weak Hopf quasigroup. Then $H_L\hookrightarrow H$ is a weak $H$-Galois extension with normal basis. Also, $\gamma_H^{-1}$ is almost lineal. 

First of all, note that by Proposition \ref{monoid-hl}, equalities (\ref{AsubH-2}) and (\ref{AsubH-3}) hold for the right $H$-comodule magma $(H, \delta_H)$. Moreover, let $\gamma_H^{-1}=n_{H}\co (\mu_H\ot H)\co (H\ot \lambda_H\ot H)\co (H\ot \delta_H)\co i_{H\ot H}:H\square H\rightarrow H\ot_{H_L}H$. Then

\begin{itemize}
\item[ ]$\hspace{0.38cm} \gamma_H\co \gamma_H^{-1}$
\item[ ]$=p_{H\ot H}\co (\mu_H\ot H)\co (H\ot \delta_H))\co (\mu_H\ot H)\co (H\ot \lambda_H\ot H)\co (H\ot \delta_H))\co i_{H\ot H}$
\item[ ]$=p_{H\ot H}\co (\mu_H\ot H)\co (H\ot \Pi_{H}^{R}\ot H)\co (H\ot \delta_H))\co i_{H\ot H}$
\item[ ]$=p_{H\ot H}\co \nabla_H\co i_{H\ot H}$
\item[ ]$=id_{H\square H}.$

\end{itemize}

In the preceding calculations, the first equality follows by the definition of $\gamma_H$; the second one relies on the coassociativity of $\delta_{H}$ and on (a4-7) of Definition \ref{Weak-Hopf-quasigroup}; in the third one we use (\ref{nabladeH}); finally, the last one is a direct consequence of the factorization of $\nabla_{H}$.
On the other hand,

\begin{itemize}
\item[ ]$\hspace{0.38cm} \gamma_H^{-1}\co \gamma_H\co n_H$
\item[ ]$=n_H\co  (\mu_H\ot H)\co (H\ot \lambda_H\ot H)\co (H\ot \delta_H)\co \nabla_H\co (\mu_H\ot H)\co (H\ot \delta_H)$
\item[ ]$=n_H\co  (\mu_H\ot H)\co (H\ot \lambda_H\ot H)\co (H\ot \delta_H)\co (\mu_H\ot H)\co (H\ot \delta_H)$
\item[ ]$=n_H\co (\mu_H\ot H)\co (H\ot \Pi_{H}^{L}\ot H)\co (H\ot \delta_H)$
\item[ ]$=n_H\co (\mu_H\ot H)\co (H\ot (i_L\co p_L)\ot H)\co (H\ot \delta_H)$
\item[ ]$=n_H\co (H\ot (\Pi_{H}^{L}*id_H))$
\item[ ]$=n_H,$
\end{itemize}
where the first equality follows by the definition of $\gamma_H$; the second one by applying (\ref{nabla-3}) to the right $H$-comodule magma $H$. The third equality is a consequence of the coassociativity of $\delta_{H}$ and (a4-6) of Definition \ref{Weak-Hopf-quasigroup}; the fourth one follows because $\Pi_{H}^{L}=i_L\co p_L$; the fifth equality uses the properties of  $n_{H}$ and the last one follows by (\ref{pi-l}). As a consequence, $\gamma_H^{-1}\co \gamma_H=id_{H\ot_{H_L}H}$ and $H_L\hookrightarrow H$ is a weak $H$-Galois extension.

Now we must show that the extension has a normal basis. Let $\Omega_H:H_L\ot H\rightarrow H_L\ot H$ be the morphism defined as 
$\Omega_H=(p_L\ot H)\co \delta_H\co \mu_H\co (i_L\ot H)$. By (\ref{pi-l}), $\Omega_H$ is idempotent. Moreover, using that $i_{L}$ is an equalizer, (a1) of Definition \ref{Weak-Hopf-quasigroup}, and (\ref{mu-pi-r-var}) we obtain that $\Omega_H=((p_L\co \mu_H)\ot H)\co (i_{L}\ot \delta_H)$ and then $\Omega_H$ is a right $H$-comodule morphism. Moreover, using (\ref{pi-delta-mu-pi-1})  and the equality (\ref{monoid-hl-2}),

\begin{itemize}
\item[ ]$\hspace{0.38cm} (\mu_{H_L}\ot H)\co (H_L\ot \Omega_H)$
\item[ ]$=((p_L\co \mu_H\co (i_{H_L}\ot \Pi_{H}^{L}))\ot H)\co (H_L\ot i_{H_L}\ot \delta_H)$
\item[ ]$=((p_L\co \mu_H\co (i_{H_L}\ot H))\ot H)\co (H_L\ot i_{H_L}\ot \delta_H)$
\item[ ]$=((p_L\co \mu_H\co (\mu_H\co (i_{H_L}\ot i_{H_L})\ot H))\ot H)\co (H_L\ot H_L\ot \delta_H)$
\item[ ]$=\Omega_H\co (\mu_{H_L}\ot H),$
\end{itemize}
and $\Omega_H$ is a morphism of left $H_L$-modules.
On the other hand, let $s_H:H_L\times H\rightarrow H_L\ot H$ and $r_H:H_L\ot H\rightarrow H_L\times H$ be the morphisms such that $s_H\co r_H=\Omega_H$ and $r_H\co s_H=id_{H_L\times H}$ and define $b_H=r_H\co (p_L\ot H)\co \delta_H$. It is not difficult to see that $b_H$ is a right $H$-comodule isomorphism with inverse $b_H^{-1}=\mu_H\co (i_{H_L}\ot H)\co s_H$. Moreover, 

\begin{itemize}
\item[ ]$\hspace{0.38cm} \varphi_{H_{L}\times H}\co (H_L\ot b_H)$
\item[ ]$=r_H\co (\mu_{H_L}\ot H)\co (H_L\ot \Omega_H)\co (H_L\ot ((p_L\ot H)\co \delta_H))$
\item[ ]$=r_H\co (\mu_{H_L}\ot H)\co (H_L\ot \Omega_H)\co (H_L\ot \eta_{H_L}\ot H)$
\item[ ]$=r_H$
\item[ ]$=r_H\co \Omega_H$
\item[ ]$=b_H\co \mu_H\co (i_{L}\ot H),$
\end{itemize}
and $H_L\hookrightarrow H$ is a weak $H$-Galois extension with normal basis.

Finally,  in this case, if $H\ot -$ preserves coequalizers, the morphism  $\gamma_H^{-1}$ is almost lineal. Indeed: Let $\varphi_{H\ot_{H_L}H}:H\ot H\ot_{H_L}H\rightarrow H\ot_{H_L}H$ be the factorization though the coequalizer $H\ot n_{H}$ of the morphism $n_{H}\co (\mu_H\ot H)$, i.e., the morphism such that
\begin{equation}
\label{varphiparaH}
\varphi_{H\ot_{H_L}H}\co (H\ot n_{H})=n_{H}\co (\mu_H\ot H).
\end{equation}
Then, by (a4-3) of Definition \ref{Weak-Hopf-quasigroup}, (\ref{nabladeH}) and (\ref{varphiparaH}), 
\begin{itemize}
\item[ ]$\hspace{0.38cm} \varphi_{H\ot_{H_{L}}\ot H}\co (H\ot (\gamma_H^{-1}\co p_{H\ot H}\co (\eta_H\ot H)))$
\item[ ]$=\varphi_{H\ot_{H_L}H}\co (H\ot n_H)\co (H\ot ((\mu_H\ot H)\co (\Pi_{H}^{R}\ot \lambda_H\ot H)\co (H\ot \delta_H)\co \delta_H))$
\item[ ]$=n_H\co (\mu_H\ot H)\co (H\ot \lambda_H\ot H)\co (H\ot \delta_H)$
\item[ ]$=n_H\co (\mu_H\ot H)\co (H\ot \lambda_H\ot H)\co (\mu_H\ot \delta_H)\co (H\ot \Pi_{H}^{R}\ot H)\co (H\ot \delta_H)$
\item[ ]$=\gamma_H^{-1}\co p_{H\ot H},$
\end{itemize}
and $\gamma_H^{-1}$ is almost lineal.

}
\end{example}

To finish this section we show two technical lemmas that will be useful in order to get the main result of this paper which gives a characterization of weak $H$-Galois extensions with normal basis.

\begin{lemma}
 \label{igualdadesgalois}
Let $H$ be a weak Hopf quasigroup and let $A^{co H}\hookrightarrow A$ be a weak $H$-Galois extension. Then the following equalities hold:
\begin{equation}
\label{igualdadesgalois-1}
\rho^{1}_{A\ot_{A^{co H}}A}\co \gamma_{A}^{-1}=((\gamma_{A}^{-1}\co p_{A\ot H})\ot H)\co (A\ot c_{H,H})\co (A\ot \mu_H\ot H)\co (\rho_A\ot ((\lambda_H\ot H)\co \delta_H))\co i_{A\ot H},
\end{equation}
\begin{equation}
\label{igualdadesgalois-2}
((\gamma_A^{-1}\co p_{A\ot H})\ot H)\co (A\ot \delta_H)=\rho^{2}_{A\ot_{A^{co H}}A}\co \gamma_A^{-1} \co p_{A\ot H}.
\end{equation}

\end{lemma}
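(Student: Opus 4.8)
The strategy is to verify both identities by composing with suitable epimorphisms so that everything reduces to manipulations already available in the excerpt. For \eqref{igualdadesgalois-1}, I would compose on the right with the coequalizer $n_{A}$, so that $\gamma_{A}^{-1}\co p_{A\ot H}$ disappears against $\gamma_{A}\co n_{A}$; more precisely I would first note that $\gamma_{A}^{-1}\co \gamma_{A}=id_{A\ot_{A^{co H}}A}$ and that, by \eqref{can-fact} and \eqref{rho-nabla}, $p_{A\ot H}\co i_{A\ot H}\co \gamma_A\co n_A=\gamma_A\co n_A$. Then the left-hand side becomes $\rho^{1}_{A\ot_{A^{co H}}A}\co n_{A}$, which by \eqref{rho-fact-1} equals $(n_{A}\ot H)\co(A\ot c_{H,A})\co(\rho_{A}\ot A)$. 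On the right-hand side, replacing $i_{A\ot H}\co\gamma_A\co n_A$ by $\nabla_A\co(\mu_A\ot H)\co(A\ot\rho_A)$ via \eqref{can-fact}, the computation is then essentially the same chain of equalities used in the proof of \eqref{gammarho-1} in Proposition \ref{monoidecoinvariantes}: expand $\nabla_A$ through \eqref{new-exp-nabla} or directly through \eqref{nabla-3}, push $\mu_A$ past $\rho_A$ using \eqref{chmagma}, use coassociativity of $\delta_H$ and the antipode axiom (a4-6) of Definition \ref{Weak-Hopf-quasigroup} to collapse $\mu_{H}\co(H\ot\lambda_{H})$-type expressions, and finish with \eqref{mu-pi-l} and the counit properties. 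The only genuine novelty is bookkeeping the extra $\lambda_H$ coming from the explicit formula for $\gamma_A^{-1}$, which matches the one displayed in \eqref{igualdadesgalois-1}.

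For \eqref{igualdadesgalois-2}, I would again test the equality after precomposition: both sides are morphisms $A\square H\to A\ot_{A^{co H}}A\ot H$, but since $p_{A\ot H}$ is an epimorphism it suffices to check the equality after composing with $p_{A\ot H}$ on the right is automatic; instead I would compose the \emph{inner} occurrence and use that $\gamma_A$ is a morphism of right $H$-comodules. Concretely, \eqref{gamma-comod} says $\rho_{A\square H}\co\gamma_A=(\gamma_A\ot H)\co\rho^{2}_{A\ot_{A^{co H}}A}$; applying $\gamma_A^{-1}\ot H$ on the left and $\gamma_A^{-1}$ on the right yields $\rho^{2}_{A\ot_{A^{co H}}A}\co\gamma_A^{-1}=(\gamma_A^{-1}\ot H)\co\rho_{A\square H}$. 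Then I would unwind $\rho_{A\square H}=(p_{A\ot H}\ot H)\co(A\ot\delta_H)\co i_{A\ot H}$ and observe that $(\gamma_A^{-1}\ot H)\co(p_{A\ot H}\ot H)=(\gamma_A^{-1}\co p_{A\ot H})\ot H$, while $(A\ot\delta_H)\co i_{A\ot H}$ can be replaced using \eqref{nabla-comod} (i.e. $\nabla_A$ is a right $H$-comodule morphism) together with $i_{A\ot H}\co p_{A\ot H}=\nabla_A$ and $p_{A\ot H}\co i_{A\ot H}=id$, so that $(A\ot\delta_H)\co i_{A\ot H}=(\nabla_A\ot H)\co(A\ot\delta_H)\co i_{A\ot H}=(i_{A\ot H}\ot H)\co(p_{A\ot H}\ot H)\co(A\ot\delta_H)\co i_{A\ot H}$. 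Composing with $p_{A\ot H}$ on the right of \eqref{igualdadesgalois-2} then turns the desired identity into $((\gamma_A^{-1}\co p_{A\ot H})\ot H)\co(A\ot\delta_H)=\rho^{2}_{A\ot_{A^{co H}}A}\co\gamma_A^{-1}\co p_{A\ot H}$, which is exactly what the comodule-morphism property of $\gamma_A$ gives after the above rewriting.

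The main obstacle will be \eqref{igualdadesgalois-1}: there one cannot simply invoke a structural property of $\gamma_A$ but has to reproduce, in the presence of the antipode, the multi-step diagrammatic computation behind \eqref{gammarho-1}, keeping careful track of where associativity fails and only using the weakened forms \eqref{mu-assoc-1}--\eqref{mu-assoc-4}, \eqref{2-mu-delta-pi-l}, and the comodule-magma axiom \eqref{chmagma}. In particular the step where $\mu_{H}\co(\mu_{H}\ot\lambda_{H})\co(H\ot\delta_{H})$ is replaced by $\mu_{H}\co(H\ot\Pi_{H}^{L})$ via (a4-6), followed by \eqref{mu-pi-l}, must be applied at the correct tensor slot; getting the symmetry isomorphisms $c$ in the right places is the delicate part. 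Once that identity is established, \eqref{igualdadesgalois-2} follows almost formally from the fact, already recorded in \eqref{gamma-comod}, that $\gamma_A$ is $H$-colinear.
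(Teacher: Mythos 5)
Your proposal is correct and follows essentially the same route as the paper: both identities are obtained from (\ref{gammarho-1}) and (\ref{gammarho-2}) (the latter being the colinearity (\ref{gamma-comod}) of $\gamma_A$) by composing with $\gamma_{A}^{-1}\ot H$ on the left and with $\gamma_{A}^{-1}$ (resp.\ $\gamma_{A}^{-1}\co p_{A\ot H}$, using $(A\ot\delta_H)\co\nabla_A=(\nabla_A\ot H)\co(A\ot\delta_H)$ and $p_{A\ot H}\co\nabla_{A}=p_{A\ot H}$) on the right. The only difference is that the paper treats (\ref{igualdadesgalois-1}) as an immediate corollary of the already-established identity (\ref{gammarho-1}), so the multi-step diagrammatic computation you plan to reproduce after precomposing with $n_A$ is not actually needed again.
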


\begin{proof} The first equality follows easily from (\ref{gammarho-1})  composing with 
$\gamma_{A}^{-1}\ot H$ on the left and with $\gamma_{A}^{-1}$ on the right. On the other hand, if  we compose in  (\ref{gammarho-2}) with  $\gamma_{A}^{-1}\ot H$ on the left  and with $\gamma_{A}^{-1}\co p_{A\ot H}$ on the right we obtain (\ref{igualdadesgalois-2}). 
\end{proof}

\begin{lemma}
 \label{morfismomA}
Let $H$ be a weak Hopf quasigroup and let $A^{co H}\hookrightarrow A$ be a weak $H$-Galois extension with normal basis. Then there is a unique morphism $m_A:A\ot_{A^{co H}}A\rightarrow A$ such that
\begin{equation}
\label{condicionmA}
m_A\co n_{A}=\mu_A\co (A\ot (((i_{A}\ot \varepsilon_H)\co s_A\co b_A))).
\end{equation}
Moreover, the equalities
\begin{equation}
\label{SegundacondicionmA}
m_A\co \gamma_A^{-1}\co p_{A\ot H}\co \rho_A=(i_A\ot \varepsilon_H)\co s_A\co b_A
\end{equation}
and
\begin{equation}
\label{terceracondicionmA}
\rho_A\co m_A=(m_A\ot H)\co \rho^{1}_{A\ot_{A^{co H}}A}
\end{equation}
hold.

\end{lemma}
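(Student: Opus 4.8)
The plan is to build $m_A$ from the universal property of the coequalizer $n_A$, applied to a suitable endomorphism of $A$, and then to read off the two remaining identities by composing with $n_A$ and invoking the factorization properties established in this section. Write $g:=(i_A\ot\varepsilon_H)\co s_A\co b_A:A\rightarrow A$. Since $(i_A\ot\varepsilon_H)=i_A\co(A^{co H}\ot\varepsilon_H)$, this factors as $g=i_A\co g'$ with $g':=(A^{co H}\ot\varepsilon_H)\co s_A\co b_A:A\rightarrow A^{co H}$.

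The heart of the argument is to check that $g$ is an endomorphism of the left $A^{co H}$-module $A$, where $A$ carries the action $\mu_A\co(i_A\ot A)$ (an associative action by (\ref{AsubH-2}), since $A^{co H}$ is a monoid by Proposition \ref{monoidecoinvariantes}), that is,
$$g\co\mu_A\co(i_A\ot A)=\mu_A\co(i_A\ot g).$$
To see this I would start from the fact that $b_A$ is a morphism of left $A^{co H}$-modules, so $b_A\co\mu_A\co(i_A\ot A)=r_A\co(\mu_{A^{co H}}\ot H)\co(A^{co H}\ot(s_A\co b_A))$; then compose with $(i_A\ot\varepsilon_H)\co s_A$ on the left, replace $s_A\co r_A$ by $\Omega_A$, use that $\Omega_A$ is a morphism of left $A^{co H}$-modules together with $\Omega_A\co s_A=s_A\co(r_A\co s_A)=s_A$ and $r_A\co s_A=id_{A^{co H}\times H}$, and finally rewrite $i_A\co\mu_{A^{co H}}=\mu_A\co(i_A\ot i_A)$ by (\ref{mu-coinv}). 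Granting this, precomposing $\mu_A\co(A\ot g)$ with $(\mu_A\co(A\ot i_A))\ot A$ yields $\mu_A\co(\mu_A\ot A)\co(A\ot i_A\ot A)\co(A\ot A^{co H}\ot g)$, while precomposing it with $A\ot(\mu_A\co(i_A\ot A))$ and using the previous identity yields $\mu_A\co(A\ot\mu_A)\co(A\ot i_A\ot A)\co(A\ot A^{co H}\ot g)$; these coincide by (\ref{AsubH-2}). Hence $\mu_A\co(A\ot g)$ coequalizes the pair defining $n_A$, and the universal property gives a unique $m_A:A\ot_{A^{co H}}A\rightarrow A$ with $m_A\co n_A=\mu_A\co(A\ot g)$, which is exactly (\ref{condicionmA}); uniqueness is the uniqueness in the universal property (equivalently, $n_A$ is an epimorphism).

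For (\ref{SegundacondicionmA}): by (\ref{can-fact}) and $\mu_A\co(\eta_A\ot A)=id_A$ we have $\gamma_A\co n_A\co(\eta_A\ot A)=\overline{\gamma}_A\co(\eta_A\ot A)=p_{A\ot H}\co\rho_A$, hence $\gamma_A^{-1}\co p_{A\ot H}\co\rho_A=n_A\co(\eta_A\ot A)$; composing with $m_A$, using (\ref{condicionmA}) and again $\mu_A\co(\eta_A\ot A)=id_A$, gives $m_A\co\gamma_A^{-1}\co p_{A\ot H}\co\rho_A=\mu_A\co(A\ot g)\co(\eta_A\ot A)=g$. For (\ref{terceracondicionmA}), since $n_A$ is an epimorphism it suffices to compose both sides with $n_A$. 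On the left, $g=i_A\co g'$ together with (\ref{muArhoA-2}) give $\rho_A\co m_A\co n_A=\rho_A\co\mu_A\co(A\ot i_A)\co(A\ot g')=(\mu_A\ot H)\co(A\ot c_{H,A})\co(\rho_A\ot g)$. On the right, (\ref{rho-fact-1}) and (\ref{condicionmA}) give $(m_A\ot H)\co\rho^{1}_{A\ot_{A^{co H}}A}\co n_A=((\mu_A\co(A\ot g))\ot H)\co(A\ot c_{H,A})\co(\rho_A\ot A)$, and the naturalness of $c$ identifies this with the same expression. Therefore (\ref{terceracondicionmA}) holds.

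The only genuinely delicate point is the claim that $g$ is left $A^{co H}$-linear: it is the single place where the complete normal-basis data is used, and the module and splitting identities for $b_A$, $\Omega_A$, $s_A$ and $r_A$ must be chased in the correct order. Everything else is a routine composition of the factorization properties (\ref{can-fact}), (\ref{rho-fact-1}), (\ref{muArhoA-2}) and (\ref{mu-coinv}) with the unit and counit axioms and the naturalness of $c$.
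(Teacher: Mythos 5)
Your proposal is correct and follows essentially the same route as the paper: the identities (\ref{SegundacondicionmA}) and (\ref{terceracondicionmA}) are obtained exactly as in the text, via $\gamma_A\co n_A\co(\eta_A\ot A)=p_{A\ot H}\co\rho_A$ and via (\ref{muArhoA-2}), (\ref{rho-fact-1}) and the naturalness of $c$ after composing with the epimorphism $n_A$. The only difference is that the paper delegates the existence of $m_A$ to Lemma 1.9 of the cited reference, whereas you spell out that argument in full; your verification that $(i_A\ot\varepsilon_H)\co s_A\co b_A$ is left $A^{co H}$-linear (using the module properties of $b_A$ and $\Omega_A$, the splitting $\Omega_A\co s_A=s_A$, and (\ref{mu-coinv})), combined with (\ref{AsubH-2}) to show that $\mu_A\co(A\ot g)$ coequalizes the defining pair of $n_A$, is exactly the intended adaptation.
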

         
\begin{proof}
The proof for (\ref{condicionmA}) is similar to the given in Lemma 1.9 of \cite{AFG2} but using (\ref{AsubH-2}) instead of the associativity. On the other hand,
\begin{itemize}
\item[ ]$\hspace{0.38cm} m_A\co \gamma_A^{-1}\co p_{A\ot H}\co \rho_A$
\item[ ]$=m_A\co \gamma_A^{-1}\co \gamma_A\co n_{A}\co (\eta_A\ot A)$
\item[ ]$=m_A\co n_{A}\co (\eta_A\ot A)$
\item[ ]$=(i_A\ot \varepsilon_H)\co s_A\co b_A,$
\end{itemize}
and we have (\ref{SegundacondicionmA}). As far as (\ref{terceracondicionmA}), composing with the coequalizer $n_{A}$ and using (\ref{SegundacondicionmA}),  (\ref{muArhoA-2}), the naturalness of $c$, (\ref{condicionmA}) and (\ref{rho-fact-1}), 
\begin{itemize}
\item[ ]$\hspace{0.38cm} \rho_A\co m_A\co n_{A}$
\item[ ]$=((\rho_A\co \mu_A\co (A\ot i_A))\ot \varepsilon_H)\co  (A\ot  (s_A\co b_A))$
\item[ ]$=(((\mu_A\ot H)\co (A\ot c_{H,A})\co (\rho_A\ot  i_A))\ot \varepsilon_H)\co  (A\ot  (s_A\co b_A))$
\item[ ]$=((m_A\co n_{A})\ot H)\co (A\ot c_{H,A})\co (\rho_A\ot A)$
\item[ ]$=(m_A\ot H)\co \rho^{1}_{A\ot_{A^{co H}}A}\co n_{A},$
\end{itemize}
and the equality (\ref{terceracondicionmA}) holds.

\end{proof}

Note that in the previous proof, by the lack of associativity, we cannot say that $m_A$ is a left $A$-module morphism. Nevertheless, if the functor $A\ot -$ preserves coequalizers, by (\ref{AsubH-3}) the equality 
\begin{equation}
\label{msubAdemodulos}
\mu_A\co (A\ot m_A)=m_A\co \varphi_{A\ot_{A^{co H}}A}
\end{equation}
holds.

\section{Cleft extensions associated to a weak Hopf quasigroup}
In this section we introduce the notion of weak H-cleft extension associated to a weak Hopf quasigroup $H$. As a particular instances we recover the theory of cleft extensions  associated to a weak Hopf algebra \cite{nmra1, AFG2} and to a Hopf quasigroup \cite{AFGS-2, AFG-3}.

\begin{definition}
\label{Cleft}
{\rm Let $H$ be a weak Hopf quasigroup and let $(A, \rho_A)$ be a right $H$-comodule magma. We will say that $A^{co H}\hookrightarrow A$ is a weak $H$-cleft extension if there exists a right $H$-comodule morphism $h:H\rightarrow A$
(called the cleaving morphism) and a morphism $h^{-1}:H\rightarrow A$ such that

\begin{itemize}
\item[(c1)] $h^{-1}*h=(A\ot (\varepsilon_H\co \mu_{H}))\co (c_{H,A}\ot H)\co (H\ot (\rho_A\co \eta_A)).$
\item[(c2)] $(A\ot \mu_H)\co (c_{H,A}\ot H)\co (H\ot (\rho_A\co h^{-1}))\co \delta_H=(A\ot \overline{\Pi}_{H}^{R})\co \rho_A\co h^{-1}.$
\item[(c3)] $\mu_{A}\co (\mu_{A}\ot A)\co (A\ot h^{-1}\ot h)\co (A\ot \delta_H)=\mu_{A}\co (A\ot (h^{-1}*h)).$
\item[(c4)] $\mu_{A}\co (\mu_{A}\ot A)\co (A\ot h\ot h^{-1})\co (A\ot \delta_H)=\mu_{A}\co (A\ot (h*h^{-1})).$
\end{itemize}
}
\end{definition}

\begin{example}
\label{Hescleft}
{\rm Let $H$ be a weak Hopf quasigroup. Then $H_L\hookrightarrow H$ is a weak $H$-cleft extension with cleaving map $h=id_H$ and $h^{-1}=\lambda_H$.
}
\end{example}

Note that if $H$ is a weak Hopf algebra  and $(A,\rho_{A})$ is a right $H$-comodule monoid, conditions (c3) and (c4) trivialize. Then, in this case, we get the definition of weak $H$-cleft extension given in \cite{AFG2}. 

On the other hand, as a particular case, if $H$ is a Hopf quasigroup we obtain the following definition of weak $H$-cleft extension:

\begin{definition}
\label{CleftparaHq}
{\rm Let $H$ be a Hopf quasigroup and let $(A, \rho_A)$ be a right $H$-comodule magma. We will say that $A^{co H}\hookrightarrow A$ is a weak $H$-cleft extension if there exists a right $H$-comodule morphism $h:H\rightarrow A$
and a morphism $h^{-1}:H\rightarrow A$ such that

\begin{itemize}
\item[(d1)] $h^{-1}*h=\varepsilon_{H}\ot \eta_{A}.$
\item[(d2)] $(A\ot \mu_H)\co (c_{H,A}\ot H)\co (H\ot (\rho_A\co h^{-1}))\co \delta_H=h^{-1}\ot \eta_H.$
\item[(d3)] $\mu_{A}\co (\mu_{A}\ot A)\co (A\ot h^{-1}\ot h)\co (A\ot \delta_H)=A\ot \varepsilon_H.$
\item[(d4)] $\mu_{A}\co (\mu_{A}\ot A)\co (A\ot h\ot h^{-1})\co (A\ot \delta_H)=\mu_{A}\co (A\ot (h*h^{-1})).$
\end{itemize}
}
\end{definition}
\begin{remark}
\label{cleft-previo}
{\rm Let $H$ be a Hopf quasigroup and let $(A, \rho_A)$ be a right $H$-comodule magma. Let $h:H\rightarrow A$ be a comodule morphism and let $h^{-1}:H\rightarrow A$ be a morphism. Note that, in general,  the convolution product $h*h^{-1}$ is not $\varepsilon_H\ot \eta_A$. If true,  
condition (d4) turns into 
\begin{equation}
\label{d4-new}
\mu_{A}\co (\mu_{A}\ot A)\co (A\ot h\ot h^{-1})\co (A\ot \delta_H)=A\ot \varepsilon_H.
\end{equation}
On the other hand, if we assume  (\ref{d4-new}), we have that $h*h^{-1}=\varepsilon_H\ot \eta_A$ and then 
\begin{equation}
\label{primeraequiv}
\rho_{A}\co h^{-1}=(h^{-1}\ot \lambda_{H})\co c_{H,H}\co \delta_{H}
\end{equation}
holds. Indeed:
\begin{itemize}
\item[ ]$\hspace{0.38cm}(h^{-1}\ot \lambda_{H})\co c_{H,H}\co \delta_{H}$
\item[ ]$= (\rho_{A}\co (h\ast h^{-1}))\ast ((h^{-1}\ot \lambda_{H})\co c_{H,H}\co \delta_{H})$
\item[ ]$= \mu_{A\ot H}\co (( \mu_{A\ot H}\co ((\rho_{A}\co h^{-1})\ot (\rho_{A}\co h))\co \delta_{H})\ot  ((h^{-1}\ot \lambda_{H})\co c_{H,H}\co \delta_{H}) )\co \delta_{H}$
\item[ ]$= (\mu_{A}\ot H)\co (A\ot c_{H,A})\co (\mu_{A}\ot (\mu_{H}\co (\mu_{H}\ot \lambda_{H})\co \delta_{H})\ot A )\co (A\ot c_{H,A}\ot H\ot A)$
\item[ ]$\hspace{0.38cm}\co ((\rho_{A}\co h^{-1})\ot ((h\ot H)\co \delta_{H})\ot h^{-1}) \co (H\ot \delta_{H})\co \delta_{H} $
\item[ ]$=  (\mu_{A}\ot H)\co (\mu_{A}\ot c_{H,A})\co (A\ot c_{H,A}\ot A)\co ((\rho_{A}\co h^{-1})\ot ((h\ot h^{-1})\co \delta_{H}))\co \delta_{H}$
\item[ ]$=\rho_{A}\co h^{-1}.$
\end{itemize}
In the last equalities,  the first one follows by $h*h^{-1}=\varepsilon_H\ot \eta_A$ and the second one by (\ref{chmagma}). In the third one we used that $h$ is a comodule morphism, the coassociativity of $\delta_{H}$ and the naturalness of $c$. The fourth one is a consequence of the quasigroup structure of $H$ and, finally, the last one follows by the naturalness  of $c$ and (\ref{d4-new}).

If (\ref{primeraequiv}) holds, we obtain (d2) because, using the coassociativity of $\delta_{H}$ and the naturalness  of $c$:
\begin{itemize}
\item[ ]$\hspace{0.38cm}  (A\ot \mu_H)\co (c_{H,A}\ot H)\co (H\ot (\rho_A\co h^{-1}))\co \delta_H $
\item[ ]$=(A\ot \mu_{H})\co (c_{H,A}\ot H)\co (H\ot ( (h^{-1}\ot \lambda_{H})\co c_{H,H}\co \delta_{H}))\co \delta_{H} $
\item[ ]$= (h^{-1}\ot H)\co c_{H,H}\co ((id_{H}\ast \lambda_{H})\ot H)\co \delta_{H} $
\item[ ]$= h^{-1}\ot \eta_{H}.$
\end{itemize}

Therefore, if $h*h^{-1}=\varepsilon_H\ot \eta_A$ and $h$ is total ($h\co\eta_{H}=\eta_{A}$), we recover the notion of cleft comodule algebra (or $H$-cleft extension for Hopf quasigroups) introduced in \cite{AFGS-2}.

}
\end{remark}

In the following Proposition we collect the main properties of weak $H$-cleft extensions.

\begin{proposition}
\label{propiedades basicas}
Let $H$ be a weak Hopf quasigroup and let $A^{co H}\hookrightarrow A$ be a weak $H$-cleft extension with cleaving morphism $h$. Then we have that

\begin{itemize}
\item[(i)] The morphisms $h*h^{-1}$ and $q_A=\mu_{A}\co (A\ot h^{-1})\co \rho_A$ factorize through the equalizer $i_{A}.$
\item[(ii)] $\mu_A\co ((h^{-1}*h)\ot A)=(A\ot (\varepsilon_H \co \mu_H))\co (c_{H,A}\ot H)\co (H\ot \rho_A).$
\item[(iii)] $(h^{-1}*h)*h^{-1}=h^{-1}=h^{-1}*(h*h^{-1}).$
\item[(iv)] $h*(h^{-1}*h)=h=(h*h^{-1})*h.$
\item[(v)] $\mu_{A}\co (A\ot (h^{-1}*h))\co \rho_A=id_A.$
\item[(vi)] If $A^{co H}\hookrightarrow A$ satisfies (\ref{AsubH-2}), the equality $\mu_A\co (\mu_A\ot A)\co (A\ot q_A\ot h)\co (A\ot \rho_A)=\mu_A$ holds. 
\end{itemize}

\end{proposition}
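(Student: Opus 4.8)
The plan is to establish the six items roughly in the given order, exploiting repeatedly that $h$ is a right $H$-comodule morphism (so $\rho_A\co h=(h\ot H)\co\delta_H$) together with the defining axioms (c1)--(c4) of a weak $H$-cleft extension and the structural identities (\ref{mu-pi-l})--(\ref{pi-delta-mu-pi-4}), (\ref{mu-assoc-1})--(\ref{mu-assoc-4}) for a weak Hopf quasigroup. For (i), to show $h\ast h^{-1}$ factors through the equalizer $i_A$ I would check $(A\ot\Pi_H^L)\co\rho_A\co(h\ast h^{-1})=\rho_A\co(h\ast h^{-1})$ by computing $\rho_A\co(h\ast h^{-1})$ via (\ref{chmagma}), pulling the coaction through the product, rewriting $\rho_A\co h$ with the comodule condition, and then using (c4) to collapse a factor into $h\ast h^{-1}$; the appearance of $\Pi_H^L$ should come out of the target-morphism identities and (b4) of Definition \ref{H-comodulomagma}. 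The same strategy applies to $q_A=\mu_A\co(A\ot h^{-1})\co\rho_A$: expand $\rho_A\co q_A$ using (\ref{chmagma}) and the comodule axiom for $A$, substitute (c2) for the $\rho_A\co h^{-1}$ block, and match with $(A\ot\Pi_H^L)\co\rho_A\co q_A$ using (\ref{mu-pi-l}) and (\ref{delta-pi-l}).

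For (ii), I would start from the right-hand side and insert $h^{-1}\ast h$ by running (c1) in reverse, or — more directly — compute $\mu_A\co((h^{-1}\ast h)\ot A)$ by unfolding $h^{-1}\ast h=\mu_A\co(h^{-1}\ot h)\co\delta_H$, using that $h$ is a comodule morphism to feed $\rho_A$ in, applying (c3) to relate the triple product $\mu_A\co(\mu_A\ot A)\co(A\ot h^{-1}\ot h)$ to $\mu_A\co(A\ot(h^{-1}\ast h))$ with the coproduct reshuffled, and then closing with axiom (c1) and the counit/antipode identities; the coassociativity of $\delta_H$ and naturality of $c$ do the bookkeeping. Items (iii) and (iv) are then formal consequences: for $(h^{-1}\ast h)\ast h^{-1}=h^{-1}$ I would write the left side as $\mu_A\co((h^{-1}\ast h)\ot h^{-1})\co\delta_H$, apply (ii) with $A$ replaced by the relevant leg, i.e. precompose (ii) with $h^{-1}$ on the appropriate strand and with $\delta_H$, reducing it to $\mu_A\co(A\ot(\varepsilon_H\co\mu_H\co\cdots))$ acting on $\rho_A\co h^{-1}$, and collapse via (c2) and (\ref{mu-pi-r-var}); the identity $h^{-1}\ast(h\ast h^{-1})=h^{-1}$ follows by the same pattern using (c4) and (i). Dually, (iv) uses (c3), (c4) and the fact that $h\co\eta_H$ need not be $\eta_A$ but is controlled by $\rho_A\co\eta_A$.

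For (v), expand $\mu_A\co(A\ot(h^{-1}\ast h))\co\rho_A$ using $h^{-1}\ast h=\mu_A\co(h^{-1}\ot h)\co\delta_H$, push $\rho_A$ through with (\ref{muArhoA-2}) or the comodule axiom, bring in $\rho_A\co h=(h\ot H)\co\delta_H$, apply (c3) to turn the nested product into $\mu_A\co(A\ot(h^{-1}\ast h))$, and then recognize the surviving scalar factor as $\varepsilon_H$ acting through $(A\ot\varepsilon_H)\co\rho_A=id_A$, using (\ref{pi-varep}). Finally (vi), under hypothesis (\ref{AsubH-2}), combines (v) with the semi-associativity (\ref{AsubH-2}): write $\mu_A\co(\mu_A\ot A)\co(A\ot q_A\ot h)\co(A\ot\rho_A)$, insert the definition of $q_A$, use (\ref{AsubH-2}) to re-bracket so that the coinvariant leg (which $q_A$ lands in, by (i)) sits in the middle, then apply (i) to move $q_A$ across $i_A$ and reduce the inner part to $\mu_A\co(A\ot(h^{-1}\ast h))\co\rho_A=id_A$ by (v). I expect (ii) to be the technical heart of the proposition — it is where axioms (c1) and (c3) must be combined with the antipode identities and where the non-associativity forces careful use of (\ref{mu-assoc-1})--(\ref{mu-assoc-2}) rather than naive reassociation — and once (ii) is in hand the remaining items are essentially diagram-chasing.
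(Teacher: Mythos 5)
Your overall architecture matches the paper's: establish (i) and (ii) first, derive (iii)--(v) from them, and obtain (vi) from (\ref{AsubH-2}), (i), (c3) and (v). Your treatment of (iii), (iv) and (vi) is essentially the paper's argument. However, several of the specific tools you name would not do the job you assign them, and these are worth fixing before execution. First, for (i) you miss the one-line reduction $h\ast h^{-1}=\mu_A\co(A\ot h^{-1})\co\rho_A\co h=q_A\co h$ (immediate from $h$ being a comodule morphism), which makes the factorization of $h\ast h^{-1}$ a corollary of that of $q_A$; your separate computation for $h\ast h^{-1}$ invoking (c4) is misdirected --- the identity that closes both computations is (c2) (substituted for the $\rho_A\co h^{-1}$ block after (\ref{chmagma}) and coassociativity) together with the idempotency of $\overline{\Pi}_H^{R}$, and (c4) plays no role in (i). Second, and more importantly, axiom (c3) (and likewise (c4)) re-associates products of the shape $\mu_A\co(\mu_A\ot A)\co(A\ot h^{-1}\ot h)\co(A\ot\delta_H)$, i.e.\ with the free $A$-strand on the \emph{left}; in (ii) and in your proposed route for (v) the free strand sits on the \emph{right} of $h^{-1}\ast h$, so (c3) simply does not apply there. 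Your ``more direct'' route for (ii) would therefore stall.

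The correct mechanism for (ii) --- and the one the paper uses --- is much shorter: substitute (c1) for $h^{-1}\ast h$, use naturality of $c$ to rewrite the resulting expression so that (b4) of Definition \ref{H-comodulomagma} converts $((\mu_A\co c_{A,A})\ot H)\co(A\ot(\rho_A\co\eta_A))$ into $(A\ot\Pi_H^{L})\co\rho_A$, and then kill $\Pi_H^{L}$ under $\varepsilon_H\co\mu_H$ via (\ref{mu-pi-l}). No antipode identities and no (c3) are needed, so (ii) is not the ``technical heart'' you anticipate; the real work is in (i) and in assembling (iii). Similarly (v) is just (c1), naturality of $c$ and (\ref{chmagma}): one gets $(A\ot\varepsilon_H)\co\mu_{A\ot H}\co(\rho_A\ot(\rho_A\co\eta_A))=(A\ot\varepsilon_H)\co\rho_A\co\mu_A\co(A\ot\eta_A)=id_A$, with no detour through (c3). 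Your first suggestion for (ii) (running (c1) in reverse from the right-hand side) and your plan for (vi) are sound; with the corrections above the remaining items go through as you describe.
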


\begin{proof}
(i) Taking into account that $h$ is a morphism of right $H$-comodules, $h*h^{-1}=q_A\co h$ and then it suffices to get the proof for the morphism $q_A$.

\begin{itemize}
\item[ ]$\hspace{0.38cm} \rho_A\co q_A$
\item[ ]$=\mu_{A\ot H}\co (\rho_A\ot (\rho_A\co h^{-1}))\co \rho_A$
\item[ ]$=(\mu_A\ot H)\co (A\ot ((A\ot \mu_H)\co (c_{H,A}\ot H)\co (H\ot (\rho_A\co h^{-1}))\co \delta_{H}))\co \rho_A$
\item[ ]$=(\mu_A\ot H)\co (A\ot ((A\ot \overline{\Pi}_{H}^{R})\co \rho_A\co h^{-1}))\co \rho_A$
\item[ ]$=(\mu_A\ot H)\co (A\ot ((A\ot (\overline{\Pi}_{H}^{R}\co \overline{\Pi}_{H}^{R}))\co \rho_A\co h^{-1}))\co \rho_A$
\item[ ]$=(A\ot \overline{\Pi}_{H}^{R})\co \rho_A\co q_A.$
\end{itemize}

In these computations, the first and the second equalities follow because $A$ is a right $H$-comodule magma; the third one by (c2) of Definition \ref{Cleft}; the fourth one relies on the idempotent character of $\overline{\Pi}_{H}^{R}$; finally, the last equality uses the arguments of the preceding identities but in the inverse order.

As a consequence, there is a morphism $p_A:A\rightarrow A^{co H}$ such that $q_A=i_A\co p_A$.

Assertion (ii) is a direct consequence of (c1) of Definition \ref{Cleft}, (b4) of Definition \ref{H-comodulomagma}, (\ref{mu-pi-l}) and the naturalness of $c$. Indeed:
\begin{itemize}
\item[ ]$\hspace{0.38cm} \mu_A\co ((h^{-1}*h)\ot A)$
\item[ ]$= \mu_A\co (((A\ot (\varepsilon_H\co \mu_{H}))\co (c_{H,A}\ot H)\co (H\ot (\rho_A\co \eta_A)))\ot A) $
\item[ ]$= (A\ot (\varepsilon_{H}\co \mu_{H}))\co (c_{H,A}\ot H)\co (H\ot (((\mu_{A}\co c_{A,A})\ot H)\co (A\ot (\rho_{A}\co \eta_{A})))) $
\item[ ]$=(A\ot (\varepsilon_{H}\co \mu_{H}))\co (c_{H,A}\ot H)\co (H \ot ((A\ot \Pi_{H}^{L})\co \rho_{A}))  $
\item[ ]$=  (A\ot (\varepsilon_{H}\co \mu_{H}))\co (c_{H,A}\ot H)\co (H \ot \rho_{A}). $
\end{itemize}

As far as (iii), we get $(h^{-1}*h)*h^{-1}=h^{-1}*(h*h^{-1})$ by (c4) of Definition \ref{Cleft} and by the coassociativity of $\delta_{H}$. The equality $(h^{-1}*h)*h^{-1}=h^{-1}$ follows by (ii) and (c2) of Definition \ref{Cleft}. In a similar way, $h*(h^{-1}*h)=(h*h^{-1})*h$ is a consequence of the coassociativity of $\delta_{H}$ and (c3) of Definition \ref{Cleft}. The equality $h*(h^{-1}*h)=h$ follows using that $h$ is a comodule morphism,  (c1) of Definition \ref{Cleft}  and (\ref{chmagma}). It is easy to prove (v) taking into account (c1) of Definition \ref{Cleft} and (\ref{chmagma}). Finally, by (\ref{AsubH-2}), the condition of right $H$-comodule for $A$, (c3) of Definition \ref{Cleft} and (v), we have 
\begin{itemize}
\item[ ]$\hspace{0.38cm} \mu_A\co (\mu_A\ot A)\co (A\ot q_A\ot h)\co (A\ot \rho_A)$
\item[ ]$=\mu_A\co (\mu_A\ot A)\co (A\ot (i_A\co p_A)\ot h)\co (A\ot \rho_A)$
\item[ ]$=\mu_A\co (A\ot \mu_A)\co (A\ot (i_A\co p_A)\ot h)\co (A\ot \rho_A)$
\item[ ]$=\mu_A\co (A\ot (\mu_{A}\co (\mu_{A}\ot A)\co (A\ot h^{-1}\ot h)\co (A\ot \delta_H)))\co (A\ot \rho_A)$
\item[ ]$=\mu_A\co (A\ot (\mu_A\co (A\ot (h^{-1}*h))\co \rho_A))$
\item[ ]$=\mu_A,$
\end{itemize}
and the proof is complete.

\end{proof}

\begin{remark}
{\rm Note that, in the previous result, we did not use (c4) of Definition \ref{Cleft}.
}
\end{remark} 

\begin{proposition}
\label{equivalencia1}
Let $H$ be a weak Hopf quasigroup and let $(A, \rho_A)$ be a right $H$-comodule magma satisfying (\ref{AsubH-2}). Assume that there exist $h:H\rightarrow A$ and $h^{-1}:H\rightarrow A$ such that  $h$ is a right $H$-comodule morphism and conditions (c1), (c3) and (c4) of Definition
 \ref{Cleft} hold. Then condition (c2) is equivalent to (\ref{primeraequiv}).
\end{proposition}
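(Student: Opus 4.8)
The plan is to prove the two implications separately; the hypotheses (c1), (c3), (c4) and (\ref{AsubH-2}) will only be used for the implication ``(c2)~$\Rightarrow$~(\ref{primeraequiv})''. For ``(\ref{primeraequiv})~$\Rightarrow$~(c2)'' I would substitute (\ref{primeraequiv}) into both sides of (c2). On the left-hand side, after splitting the two coproducts by the coassociativity of $\delta_{H}$ and using the naturalness of $c$, the combination $\mu_{H}\co (H\ot \lambda_{H})\co \delta_{H}=\Pi_{H}^{L}$ (the definition of the target morphism in (a4) of Definition \ref{Weak-Hopf-quasigroup}) is isolated, so the left-hand side becomes $(h^{-1}\ot \Pi_{H}^{L})\co c_{H,H}\co \delta_{H}$. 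On the right-hand side, (\ref{primeraequiv}) gives $(A\ot \overline{\Pi}_{H}^{R})\co \rho_{A}\co h^{-1}=(h^{-1}\ot (\overline{\Pi}_{H}^{R}\co \lambda_{H}))\co c_{H,H}\co \delta_{H}$, and $\overline{\Pi}_{H}^{R}\co \lambda_{H}=\Pi_{H}^{L}$ by (\ref{pi-antipode-composition-3}); hence both sides of (c2) coincide with $(h^{-1}\ot \Pi_{H}^{L})\co c_{H,H}\co \delta_{H}$. Observe that this half uses none of (c1), (c3), (c4) or (\ref{AsubH-2}).

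For the converse ``(c2)~$\Rightarrow$~(\ref{primeraequiv})'' I would follow the pattern of the computation in Remark \ref{cleft-previo}, adapted to the weak setting. In the Hopf quasigroup case that computation begins by writing $(h^{-1}\ot \lambda_{H})\co c_{H,H}\co \delta_{H}$ as a convolution product whose left factor $\rho_{A}\co (h\ast h^{-1})$ acts as a unit, because there $h\ast h^{-1}=\varepsilon_{H}\ot \eta_{A}$. Here $h\ast h^{-1}$ is no longer this trivial idempotent: by Proposition \ref{propiedades basicas}(i) one has $h\ast h^{-1}=i_{A}\co p_{A}\co h$ and $q_{A}=\mu_{A}\co (A\ot h^{-1})\co \rho_{A}=i_{A}\co p_{A}$. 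So instead I would start from the identity $h^{-1}=h^{-1}\ast (h\ast h^{-1})$ of Proposition \ref{propiedades basicas}(iii), apply $\rho_{A}$, expand by (\ref{chmagma}) and by the fact that $h$ is an $H$-comodule morphism, and then use (c3), (c4), (\ref{AsubH-2}) and the associativity-type identities (\ref{mu-assoc-1})--(\ref{mu-assoc-4}) to rebracket the resulting non-associative product until a factor of the form $(A\ot \mu_{H})\co (c_{H,A}\ot H)\co (H\ot (\rho_{A}\co h^{-1}))\co \delta_{H}$ is isolated. At that point I would invoke (c2) to replace this factor by $(A\ot \overline{\Pi}_{H}^{R})\co \rho_{A}\co h^{-1}$, and then collapse the remaining expression using (\ref{mu-pi-r-var}), (\ref{pi-antipode-composition-3}), the coinvariance relation $\rho_{A}\co i_{A}=(A\ot \overline{\Pi}_{H}^{R})\co \rho_{A}\co i_{A}$, the weak counit axiom (a2) of Definition \ref{Weak-Hopf-quasigroup}, and (ii) and (v) of Proposition \ref{propiedades basicas}, to arrive at $\rho_{A}\co h^{-1}=(h^{-1}\ot \lambda_{H})\co c_{H,H}\co \delta_{H}$, which is (\ref{primeraequiv}).

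The hard part is this second implication, and the obstruction is precisely the non-associativity of $\mu_{A}$ and $\mu_{H}$. Since $h\ast h^{-1}$ is not a convolution unit, the idempotent $q_{A}=i_{A}\co p_{A}$ must be transported through the whole chain of equalities and absorbed only at the end; moreover every product that appears has to be parenthesised in exactly the way that allows (c3), (c4), (\ref{AsubH-2}) and (\ref{mu-assoc-1})--(\ref{mu-assoc-4}) to be applied in the right place and that exposes the sub-expression on which (c2) is inserted. Getting the order of the tensor factors and the bracketings to line up so that all of this fits together is the delicate bookkeeping underlying the argument.
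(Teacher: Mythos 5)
Your easy direction, (\ref{primeraequiv})$\Rightarrow$(c2), is correct and coincides with the paper's argument. The problem is the converse, (c2)$\Rightarrow$(\ref{primeraequiv}). You correctly locate the identity $h^{-1}=h^{-1}\ast(h\ast h^{-1})$ (the paper's chain of equalities does indeed terminate at $\rho_A\co(h^{-1}\ast(h\ast h^{-1}))$), but the mechanism you propose for getting from there to $(h^{-1}\ot\lambda_{H})\co c_{H,H}\co\delta_{H}$ does not work as described, and the actual engine of the proof is absent. Two concrete objections. First, ``isolating the left-hand side of (c2) and replacing it by $(A\ot\overline{\Pi}_{H}^{R})\co\rho_{A}\co h^{-1}$'' cannot by itself close the argument: the substituted expression still contains the unknown $\rho_{A}\co h^{-1}$, so you only trade one occurrence of the unknown for another; moreover nothing in your rebracketing scheme ever introduces the antipode $\lambda_{H}$, which must appear in the target formula. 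Second, the tools you name for the rebracketing --- (\ref{AsubH-2}) and (\ref{mu-assoc-1})--(\ref{mu-assoc-4}) --- are not what carries the computation; the paper instead uses the comodule--magma identity (\ref{muArhoA-22}) together with the antipode axioms (a4-4) and (a4-6) of Definition \ref{Weak-Hopf-quasigroup}.

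What the paper actually does is a convolution argument for morphisms $H\rightarrow A\ot H$, with $f=(h^{-1}\ot\lambda_{H})\co c_{H,H}\co\delta_{H}$ (the candidate), $g=\rho_{A}\co h$ and $l=\rho_{A}\co h^{-1}$ (the unknown). One first proves $f\ast g=l\ast g\ (=\rho_{A}\co(h^{-1}\ast h))$ using (c1), (b6) of Definition \ref{H-comodulomagma}, (\ref{mu-pi-l}), (\ref{mu-pi-r}) and axiom (a4-4); one then proves $(f\ast g)\ast f=f$ using (a4-6), (\ref{delta-pi-l}), part (iii) of Proposition \ref{propiedades basicas} and the auxiliary identity $(h^{-1}\ast h)\co\mu_{H}=((\varepsilon_{H}\co\mu_{H})\ot(h^{-1}\ast h))\co(H\ot\delta_{H})$; finally one computes $(l\ast g)\ast f$ directly --- this is where (c4), (\ref{muArhoA-22}) and parts (i) and (iii) of Proposition \ref{propiedades basicas} enter --- and finds it equals $\rho_{A}\co(h^{-1}\ast(h\ast h^{-1}))=l$, whence $f=(f\ast g)\ast f=(l\ast g)\ast f=l$. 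It is because $f$ is fed into these convolution identities from the outset, and because (a4-4) and (a4-6) convert $\lambda_{H}$-expressions into $\Pi_{H}^{L}$- and $\Pi_{H}^{R}$-expressions, that the antipode and condition (c2) (the latter acting only through Proposition \ref{propiedades basicas}(i) and (iii)) do their work. None of this structure is recoverable from your outline, so the hard implication remains unproved.
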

\begin{proof}
First we will prove (c2)$\Rightarrow$ (\ref{primeraequiv}): Let $f$, $g$ and $l$ be the morphisms $f=(h^{-1}\ot \lambda_H)\co c_{H,H}\co \delta_H$, $g=\rho_A\co h$ and $l=\rho_A\co h^{-1}$. We will show that $f=l$. First of all, note that
\begin{itemize}
\item[ ]$\hspace{0.38cm} f*g$

\item[ ]$=(A\ot \mu_H)\co (c_{H,A}\ot H)\co (\lambda_H\ot (h^{-1}*h)\ot H)\co (H\ot \delta_H)\co \delta_H$

\item[ ]$=(A\ot \mu_H)\co (c_{H,A}\ot (((\varepsilon_H\co \mu_H)\ot H)\co (H\ot c_{H,H})\co (\delta_H\ot H)))\co (\lambda_H\ot c_{H,A}\ot H)$
\item[ ]$\hspace{0.38cm} \co (\delta_H\ot (\rho_A\co \eta_A))$

\item[ ]$=(A\ot \mu_H)\co (c_{H,A}\ot \mu_H)\co (\lambda_H\ot c_{H,A}\ot H)\co (\delta_H\ot ((A\ot \Pi_{H}^{L})\co \rho_A\co \eta_A))$

\item[ ]$=(A\ot (\mu_H\co (\lambda_H\ot \mu_H)\co (\delta_H\ot H)))\co (c_{H,A}\ot H)\co (H\ot (\rho_A\co \eta_A))$

\item[ ]$=(A\ot (\mu_H\co (\Pi_{H}^{R}\ot H)))\co (c_{H,A}\ot H)\co (H\ot (\rho_A\co \eta_A))$

\item[ ]$=(A\ot H\ot (\varepsilon_H\co \mu_H\co c_{H,H}))\co (A\ot (\mu_H\co c_{H,H})\ot c_{H,H})
\co (((\rho_A\co \eta_A)\ot (\delta_H\co \eta_H))\ot H)$

\item[ ]$=\rho_A\co (A\ot (\varepsilon_H\co \mu_{H}))\co (c_{H,A}\ot H)\co (H\ot (\rho_A\co \eta_A))$

\item[ ]$=\rho_A\co (h^{-1}*h)$

\item[ ]$=l*g,$
\end{itemize}
where the first equality follows because $h$ is a comodule morphism as well as by the coassociativity of $\delta_{H}$ and the naturalness of $c$; the second one follows by (c1) of Definition \ref{Cleft}, the coassociativity of $\delta_{H}$ and the naturalness of $c$; in the third one we use (\ref{mu-pi-l}), and the fourth one is a consequence of (b6) of Definition \ref{H-comodulomagma} and the naturalness of $c$. The fifth equality relies on (a4-4) of Definition \ref{Weak-Hopf-quasigroup}, the sixth one on (\ref{mu-pi-r}) and the 
naturalness of $c$ and the seventh one follows because $A$ is a right $H$-comodule and by  the naturalness of $c$. Finally, the eight equality is a consequence of (c1) of Definition \ref{Cleft} and the last one follows by (\ref{chmagma}).

On the other hand, the following identity holds 
\begin{equation}
\label{aux-10}
(h^{-1}*h)\co \mu_H=((\varepsilon_H\co \mu_H)\ot (h^{-1}*h))\co (H\ot \delta_H).
\end{equation}
Indeed: using (c1) of Definition \ref{Cleft}, the naturalness of $c$ and (a2) of Definition \ref{Weak-Hopf-quasigroup}, 
\begin{itemize}
\item[ ]$\hspace{0.38cm} (h^{-1}*h)\co \mu_H$
\item[ ]$= (A\ot (\varepsilon_{H}\co \mu_{H}))\co (c_{H,A}\ot H)\co (\mu_{H}\ot (\rho_{A}\co \eta_{A})) $
\item[ ]$=  (A\ot (\varepsilon_{H}\co \mu_{H}\co (\mu_{H}\ot H)))\co  (c_{H,A}\ot H\ot H)\co (H\ot c_{H,A}\ot H)\co (H\ot H\ot (\rho_{A}\co \eta_{A}))$
\item[ ]$=(A\ot (((\varepsilon_{H}\co \mu_{H})\ot (\varepsilon_{H}\co \mu_{H}))\co (H\ot \delta_{H}\ot H))\co  (c_{H,A}\ot H\ot H)\co (H\ot c_{H,A}\ot H)\co (H\ot H\ot (\rho_{A}\co \eta_{A}))  $
\item[ ]$=  (A\ot (\varepsilon_{H}\co \mu_{H}))\co ((\varepsilon_{H}\co \mu_{H})\ot c_{H,A}\ot H)\co (H\ot \delta_{H}\ot (\rho_{A}\co \eta_{A}))$
\item[ ]$=((\varepsilon_H\co \mu_H)\ot (h^{-1}*h))\co (H\ot \delta_H).  $
\end{itemize}

Then, $(f*g)*f=f$ because
\begin{itemize}
\item[ ]$\hspace{0.38cm} (f*g)*f $
\item[ ]$= (\mu_{A}\ot H)\co (A\ot c_{H,A})\co (A\ot (\mu_H\co(\mu_H\ot \lambda_H)\co (H\ot \delta_H))\ot A)\co ((c_{H,A}\co (\lambda_{H}\ot (h^{-1}\ast h)))\ot H\ot h^{-1})$
\item[ ]$\hspace{0.38cm}\co (H\ot ((\delta_{H}\ot H)\co \delta_{H}))\co \delta_{H} $
\item[ ]$=   (\mu_{A}\ot H)\co (A\ot c_{H,A})\co (A\ot \mu_H\ot A)\co ((c_{H,A}\co (\lambda_{H}\ot (h^{-1}\ast h)))\ot H\ot h^{-1})$ 
\item[ ]$\hspace{0.38cm}\co (H\ot ((((H\ot \Pi_{H}^{L})\co \delta_{H})	\ot H)\co \delta_{H}))\co \delta_{H}$
\item[ ]$=  (\mu_{A}\ot H)\co (A\ot c_{H,A})\co (A\ot \mu_H\ot A)\co ((c_{H,A}\co (\lambda_{H}\ot ((h^{-1}\ast h)\co \mu_{H})))\ot H\ot h^{-1}) $
\item[ ]$\hspace{0.38cm}\co (H\ot ((H\ot c_{H,H})\co ((\delta_{H}\co \eta_{H})\ot H))\ot H)\co (\delta_{H}\ot H)\co \delta_{H}$
\item[ ]$=  (\mu_{A}\ot H)\co (A\ot c_{H,A})\co (A\ot \mu_H\ot A)\co ((c_{H,A}\co (\lambda_{H}\ot (((\varepsilon_H\co \mu_H)\ot (h^{-1}*h))\co (H\ot \delta_H))))\ot H\ot h^{-1}) $
\item[ ]$\hspace{0.38cm}\co (H\ot ((H\ot c_{H,H})\co ((\delta_{H}\co \eta_{H})\ot H))\ot H)\co (\delta_{H}\ot H)\co \delta_{H}$
\item[ ]$=(A\ot \mu_{H})\co (c_{H,A}\ot H)\co (H\ot c_{H,A})\co (\lambda_{H}\ot \Pi_{H}^{L}\ot ((h^{-1}\ast h)\ast h^{-1})) \co (\delta_{H}\ot H)\co \delta_{H}$
\item[ ]$=c_{H,A}\co ((\lambda_{H}\ast   \Pi_{H}^{L})\ot h^{-1})\co \delta_{H}$
\item[ ]$=c_{H,A}\co (\lambda_{H}\ot    h^{-1})\co \delta_{H}  $
\item[ ]$= f,$
\end{itemize}
where the first equality is a consequence of the coassociativity of $\delta_{H}$, the naturalness of $c$ and the condition of comodule morphism for $h$. The second one follows by (a4-6) of Definition \ref{Weak-Hopf-quasigroup}, the third one follows by (\ref{delta-pi-l}) and the fourth one relies on (\ref{aux-10}). In the fifth one we used the coassociativity of $\delta_{H}$ and the naturalness of $c$. The sixth one can be obtained 
using (iii) of Proposition \ref{propiedades basicas} and the naturalness of $c$, the seventh one  follows by (a4-3) of Definition \ref{Weak-Hopf-quasigroup} and the last one follows by the naturalness of $c$.

As a consequence, $f=l$. Indeed:
\begin{itemize}
\item[ ]$\hspace{0.38cm} f$
\item[ ]$=(f*g)*f$
\item[ ]$=(l*g)*f$
\item[ ]$=(\mu_A\ot H)\co (A\ot c_{H,A})\co (\mu_A\ot(\mu_H\co (\mu_H\ot \lambda_H)\co (H\ot \delta_H))\ot A)$
\item[ ]$\hspace{0.38cm} \co (A\ot c_{H,A}\ot H\ot A)\co ((\rho_A\co h^{-1})\ot ((h\ot H)\co \delta_H)\ot h^{-1})\co (\delta_H\ot H)\co \delta_H$
\item[ ]$=(\mu_A\ot H)\co (A\ot c_{H,A})\co ((\mu_{A\ot H}\co  (\rho_A\ot ((A\ot \Pi_{H}^{L})\co \rho_A)))\ot A)\co(( (h^{-1}\ot h)\co \delta_{H}) \ot h^{-1})\co \delta_H$
\item[ ]$=(\mu_A\ot H)\co (A\ot c_{H,A})\co (((\mu_A\ot H)\co (A\ot c_{H,A})\co (\rho_{A}\ot A))\ot A)\co
(( (h^{-1}\ot h)\co \delta_{H}) \ot h^{-1})\co \delta_H$
\item[ ]$=((\mu_{A}\co (\mu_{A}\ot A)\co (A\ot h\ot h^{-1})\co (A\ot \delta_H))\ot H)\co (A\ot c_{H,H})\co ((\rho_A\co h^{-1})\ot H)\co \delta_H$
\item[ ]$=(\mu_A\co (A\ot (h\ast h^{-1}))\ot H)\co (A\ot c_{H,H})\co ((\rho_A\co h^{-1})\ot H)\co \delta_H$
\item[ ]$=(\mu_A\ot H)\co (A\ot c_{H,A})\co ((\rho_A\co h^{-1})\ot (h\ast h^{-1}))\co \delta_H$
\item[ ]$=\mu_{A\ot H}\co  ((\rho_A\co h^{-1})\ot ((A\ot \Pi_{H}^{L})\co \rho_A\co (h\ast h^{-1})))\co \delta_H$
\item[ ]$=\mu_{A\ot H}\co  ((\rho_A\co h^{-1})\ot (\rho_A\co (h\ast h^{-1})))\co \delta_H$
\item[ ]$=\rho_A\co (h^{-1}\ast (h\ast h^{-1}))$
\item[ ]$=l,$
\end{itemize} 
where the first and the second equalities follow by the  identities previously proved, and the third one is a consequence of the coassociativity of $\delta_{H}$, the naturalness of $c$ and the condition of comodule morphism for $h$. In the fourth equality we used that $h$ is a morphism of comodules and (a4-6) of Definition \ref{Weak-Hopf-quasigroup}, while the fifth and the ninth ones follow by (\ref{muArhoA-22}). The sixth one relies on  the coassociativity of $\delta_{H}$ and the naturalness of $c$, the seventh one on (c4) of Definition \ref{Cleft} and the eighth one follows by naturalness of $c$. In the tenth one we applied (i) of Proposition \ref{propiedades basicas} and the eleventh one relies on (\ref{chmagma}) . Finally, the last one follows by  (iii) of  Proposition \ref{propiedades basicas}. 

Conversely,  (\ref{primeraequiv}) $\Rightarrow$ (c2). Indeed: 
\begin{itemize}
\item[ ]$\hspace{0.38cm}(A\ot \mu_H)\co (c_{H,A}\ot H)\co (H\ot (\rho_A\co h^{-1}))\co \delta_H$
\item[ ]$= (A\ot \mu_H)\co (c_{H,A}\ot H)\co (H\ot ((h^{-1}\ot \lambda_H)\co c_{H,H}
\co \delta_H))\co \delta_H $
\item[ ]$=(h^{-1}\ot \Pi_{H}^{L})\co c_{H,H}\co \delta_H  $
\item[ ]$=(h^{-1}\ot (\overline{\Pi}_{H}^{R}\co
\lambda_{H}))\co c_{H,H}\co \delta_H  $
\item[ ]$=(A\ot \overline{\Pi}_{H}^{R})\co \rho_A\co h^{-1},   $
\end{itemize}
where the first and the fourth equalities follow by (\ref{primeraequiv}), the second one by the coassociativity of $\delta_{H}$ and the naturalness of $c$ and the third one by (\ref{pi-antipode-composition-3}).
\end{proof}

\begin{proposition}
\label{equivalencia2}
Let $H$ be a weak Hopf quasigroup and let $(A, \rho_A)$ be a right $H$-comodule magma satisfying (\ref{AsubH-2}). Assume that there exist $h:H\rightarrow A$ and $h^{-1}:H\rightarrow A$ such that  $h$ is a right $H$-comodule morphism and  conditions (c1), (c2) and (c3) of Definition
 \ref{Cleft} hold. Then condition (c4) is equivalent to
\begin{equation}
\label{segundaequiv}
\mu_A\co (\mu_A\ot h^{-1})\co (A\ot \rho_A)=\mu_A\co (A\ot q_A).
\end{equation}
\end{proposition}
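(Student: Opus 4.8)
The plan is to prove the two implications separately; (\ref{segundaequiv})$\Rightarrow$(c4) is immediate, while (c4)$\Rightarrow$(\ref{segundaequiv}) follows from a short computation organized around one auxiliary identity. For (\ref{segundaequiv})$\Rightarrow$(c4) I would compose the equality (\ref{segundaequiv}) on the right with $A\ot h$. Since $h$ is a right $H$-comodule morphism, $\rho_{A}\co h=(h\ot H)\co\delta_{H}$, so the left-hand side of (\ref{segundaequiv}) becomes $\mu_{A}\co(\mu_{A}\ot A)\co(A\ot h\ot h^{-1})\co(A\ot\delta_{H})$, which is the left-hand side of (c4); the right-hand side becomes $\mu_{A}\co(A\ot(q_{A}\co h))$, and since $q_{A}\co h=h\ast h^{-1}$ (as noted in the proof of (i) of Proposition \ref{propiedades basicas}, using only that $h$ is a comodule morphism) this is the right-hand side of (c4). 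This direction uses none of (c1)--(c3).

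For (c4)$\Rightarrow$(\ref{segundaequiv}) I would first record two consequences of the standing hypotheses (note that, by the remark following Proposition \ref{propiedades basicas}, its assertions hold under (c1)--(c3) and (\ref{AsubH-2}) alone). Composing (vi) of Proposition \ref{propiedades basicas} on the right with $\eta_{A}\ot A$ and using $\mu_{A}\co(\eta_{A}\ot A)=id_{A}$ gives
\[
\mu_{A}\co(q_{A}\ot h)\co\rho_{A}=id_{A},
\]
so each $b$ in $A$ equals $\mu_{A}(q_{A}(b_{(0)}),h(b_{(1)}))$. Rewriting the coacted copy of $A$ inside $q_{A}=\mu_{A}\co(A\ot h^{-1})\co\rho_{A}$ by means of this identity and the coassociativity of the coaction, so that $q_{A}(b)=\mu_{A}(\mu_{A}(q_{A}(b_{(0)}),h(b_{(1)})),h^{-1}(b_{(2)}))$, and then applying (c4) to the resulting block of shape $\mu_{A}\co(\mu_{A}\ot A)\co(A\ot h\ot h^{-1})\co(A\ot\delta_{H})$, one obtains the key identity
\[
\mu_{A}\co(q_{A}\ot(h\ast h^{-1}))\co\rho_{A}=q_{A}.
\]

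With these at hand, the proof of (\ref{segundaequiv}) is a chain of rewritings of $\mu_{A}\co(\mu_{A}\ot h^{-1})\co(A\ot\rho_{A})$, i.e. of $\mu_{A}(\mu_{A}(a,b_{(0)}),h^{-1}(b_{(1)}))$. First replace $b_{(0)}$ by $\mu_{A}(q_{A}(b_{(0)}),h(b_{(1)}))$ using the first identity above and coassociativity. Second, move $a$ past the coinvariant $q_{A}(b_{(0)})$ by (\ref{AsubH-2}); this is legitimate because $q_{A}$ factorizes through $i_{A}$ by (i) of Proposition \ref{propiedades basicas}. Third, apply (c4) to the block $\mu_{A}\co(\mu_{A}\ot A)\co(A\ot h\ot h^{-1})\co(A\ot\delta_{H})$ now visible, replacing it by $\mu_{A}\co(A\ot(h\ast h^{-1}))$. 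Fourth, use (\ref{AsubH-2}) once more to move $a$ back out past the coinvariant $q_{A}(b_{(0)})$, and then the key identity to collapse the remaining block to $q_{A}(b)$; the outcome is $\mu_{A}(a,q_{A}(b))=\mu_{A}\co(A\ot q_{A})$, as required.

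The only real obstacle is the absence of associativity: every reassociation must be cast into one of two admissible shapes --- one whose middle factor is a value of $q_{A}$ (hence a coinvariant, where (\ref{AsubH-2}) applies) or the pattern $\mu_{A}\co(\mu_{A}\ot A)\co(A\ot h\ot h^{-1})\co(A\ot\delta_{H})$ that is governed by (c4). The step requiring care is arranging this in the derivation of the key identity $\mu_{A}\co(q_{A}\ot(h\ast h^{-1}))\co\rho_{A}=q_{A}$; once it is available, the remaining steps form a direct sequence of these permitted moves together with coassociativity.
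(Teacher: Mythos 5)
Your proposal is correct and follows essentially the same route as the paper's proof: the forward direction by composing with $A\ot h$ and using $q_A\co h=h\ast h^{-1}$, and the converse by inserting $\mu_A\co(q_A\ot h)\co\rho_A$ (i.e.\ assertion (vi) of Proposition \ref{propiedades basicas}), applying (c4), and shuttling $a$ past the coinvariant $q_A(b_{(0)})$ via (\ref{AsubH-2}). The only cosmetic difference is that you extract the collapse $\mu_A\co(q_A\ot(h\ast h^{-1}))\co\rho_A=q_A$ as a separate key identity derived from (c4), whereas the paper establishes the same fact inline by applying (c4) backwards and then (c3) and (v); both are valid.
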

\begin{proof}
We get (c4) of Definition
 \ref{Cleft} by composing with $A\ot h$ in (\ref{segundaequiv}) and using that $h$ is a morphism of $H$-comodules.  
 
As far as the "if" part,
 
\begin{itemize}
\item[ ]$\hspace{0.38cm}\mu_A\co (\mu_A\ot h^{-1})\co (A\ot \rho_A)$

\item[ ]$=\mu_A\co ((\mu_{A}\co ((\mu_{A}\co (A\ot q_{A}))\ot h)\co (A\ot \rho_{A}))\ot h^{-1})\co (A\ot \rho_A)$

\item[ ]$=\mu_A\co (\mu_{A}\ot A)\co ((\mu_{A}\co (A\ot q_{A}))\ot ((h\ot h^{-1})\co \delta_{H}))\co (A\ot \rho_A) $

\item[ ]$=\mu_A\co (\mu_A\ot A)\co (A\ot q_A\ot (h\ast h^{-1}))\co (A\ot \rho_A)$

\item[ ]$=\mu_A\co (A\ot \mu_A)\co (A\ot q_A\ot (h\ast h^{-1}))\co (A\ot \rho_A)$

\item[ ]$=\mu_A\co (A\ot (\mu_{A}\co (\mu_{A}\ot A)\co (A\ot h\ot h^{-1})\co (q_{A}\ot \delta_H)\co \rho_{A}))$

\item[ ]$=\mu_A\co (A\ot \mu_A)\co (A\ot (\mu_{A}\co (\mu_{A}\ot A)\co (A\ot ((h^{-1}\ot h)\co\delta_{H})))\ot h^{-1})\co (A\ot A\ot \delta_{H})\co  (A\ot \rho_A)$

\item[ ]$=\mu_A\co (A\ot \mu_A)\co (A\ot (\mu_A\co (A\ot (h^{-1}\ast h))\co \rho_A)\ot h{-1})\co (A\ot \rho_A)$
\item[ ]$=\mu_A\co (A\ot q_A).$
\end{itemize}

In the preceding computations, the first equality follows by (vi) of Proposition \ref{propiedades basicas}; the second one by the comodule condition for $A$, and the third and fifth ones by (c4) of Definition \ref{Cleft}; in the fourth one we use  (\ref{AsubH-2}) and $q_A=i_A\co p_A$. The sixth equality follows because $A$ is a right $H$-comodule and coassociativity of $\delta_{H}$; the seventh  one relies on  (c3) of Definition \ref{Cleft}; finally, in the last one we use (v) of Proposition 
\ref{propiedades basicas}.

\end{proof}

\section{The main theorem}

Now we get the main result of this paper which gives a characterization of Galois extensions with normal basis in terms of cleft extensions.

\begin{theorem}
\label{caracterizacion}
Let $H$ be a weak Hopf quasigroup and let $(A, \rho_A)$ be a right $H$-comodule magma satisfying (\ref{AsubH-2}), (\ref{AsubH-3}) and such that the functor $A\ot -$ preserves coequalizers. The following assertions are equivalent.

\begin{itemize}
\item[(i)] $A^{co H}\hookrightarrow A$ is a weak $H$-Galois extension with normal basis and the morphism $\gamma_{A}^{-1}$ is almost lineal.
\item[(ii)] $A^{co H}\hookrightarrow A$ is a weak $H$-cleft extension.
\end{itemize}

\end{theorem}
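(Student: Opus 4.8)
The plan is to establish the equivalence by constructing, in each direction, the data required by the other notion and verifying the defining axioms, using heavily the technical lemmas already assembled (Lemma \ref{igualdadesgalois}, Lemma \ref{morfismomA}, Proposition \ref{propiedades basicas}) together with the characterizations of Propositions \ref{equivalencia1} and \ref{equivalencia2}.

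\textbf{(ii) $\Rightarrow$ (i).} Starting from a weak $H$-cleft extension with cleaving morphism $h$ and inverse $h^{-1}$, I would first build the candidate inverse of $\gamma_A$. The natural guess, generalizing the classical and the Hopf-quasigroup cases (cf. Example \ref{HesGalois}), is to factor the morphism $\mu_A\co(A\ot((\mu_A\ot h^{-1})\co(A\ot\rho_A)))$ — or more precisely $n_A\co(A\ot(\mu_A\co(A\ot h^{-1})\co\rho_A))$ composed appropriately — through $p_{A\ot H}$ to obtain $\gamma_A^{-1}:A\square H\to A\ot_{A^{co H}}A$. One checks $\gamma_A\co\gamma_A^{-1}=id_{A\square H}$ using (c1), (\ref{chmagma}), the idempotency of $\nabla_A$ and (\ref{nabla-3}); the reverse identity $\gamma_A^{-1}\co\gamma_A\co n_A=n_A$ uses (v) of Proposition \ref{propiedades basicas} (which gives $\mu_A\co(A\ot(h^{-1}*h))\co\rho_A=id_A$), (\ref{AsubH-2}), and the coequalizer property of $n_A$ (the morphism $q_A=i_A\co p_A$ is absorbed on the coinvariant side). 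That $\gamma_A^{-1}$ is almost lineal, i.e.\ (\ref{almostlineal}), follows from (\ref{AsubH-3}), the functor $A\ot-$ preserving coequalizers (so $\varphi_{A\ot_{A^{co H}}A}$ exists by Lemma \ref{morfismosauxiliares}), and property (vi) of Proposition \ref{propiedades basicas}. For the normal basis, I would take $\Omega_A=p_{A\ot H}\co(\ldots)\co i$-type morphism built from $q_A$ and $h$; concretely set $b_A=r_A\co\rho_A$-style map using the coinvariant projection $p_A$ and $h$, with $\Omega_A$ the corresponding idempotent on $A^{co H}\ot H$, and verify it is a morphism of left $A^{co H}$-modules and right $H$-comodules (using (c2)/(\ref{primeraequiv}), which is available by Proposition \ref{equivalencia1}) and that $b_A$ is an isomorphism with inverse given by $\mu_A\co(i_A\ot h)$ post-composed with $s_A$; the module and comodule compatibility of $b_A$ reduce to (c3), (c4) and the comodule-morphism property of $h$.

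\textbf{(i) $\Rightarrow$ (ii).} Conversely, given a weak $H$-Galois extension with normal basis $b_A:A\to A^{co H}\times H$ and almost-linear $\gamma_A^{-1}$, define the cleaving morphism by $h=m_A\co\gamma_A^{-1}\co p_{A\ot H}\co(\eta_A\ot H)$, where $m_A$ is the morphism of Lemma \ref{morfismomA}; equivalently $h=\mu_A\co(\eta_A\ot((i_A\ot\varepsilon_H)\co s_A\co b_A))$-flavoured expression evaluated against the comodule structure. Its convolution inverse is the natural candidate $h^{-1}$ obtained from $\gamma_A^{-1}$ together with the antipode, patterned on the formula $\gamma_H^{-1}=n_H\co(\mu_H\ot H)\co(H\ot\lambda_H\ot H)\co(H\ot\delta_H)\co i_{H\ot H}$ in Example \ref{HesGalois}. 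That $h$ is a right $H$-comodule morphism follows from (\ref{gamma-comod}), (\ref{terceracondicionmA}) and the comodule structure of $A\square H$. Condition (c1) follows from (\ref{SegundacondicionmA}) and the fact that $b_A$ lands in the image of $\Omega_A$. For (c2) I would use Proposition \ref{equivalencia1}: it suffices to verify (\ref{primeraequiv}), $\rho_A\co h^{-1}=(h^{-1}\ot\lambda_H)\co c_{H,H}\co\delta_H$, which comes from the explicit form of $h^{-1}$ via $\gamma_A^{-1}$, the comodule-morphism property (\ref{igualdadesgalois-1})–(\ref{igualdadesgalois-2}) of $\gamma_A^{-1}$, and the anticomultiplicativity (\ref{anti-antipode-2}) of $\lambda_H$. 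Conditions (c3) and (c4): for (c4) invoke Proposition \ref{equivalencia2}, reducing it to (\ref{segundaequiv}), $\mu_A\co(\mu_A\ot h^{-1})\co(A\ot\rho_A)=\mu_A\co(A\ot q_A)$; this is where the almost-linearity of $\gamma_A^{-1}$ (\ref{almostlineal}) and (\ref{msubAdemodulos}), $\mu_A\co(A\ot m_A)=m_A\co\varphi_{A\ot_{A^{co H}}A}$, do the work, together with $h^{-1}$ being essentially $m_A$ applied to a shifted argument. Condition (c3) is proved symmetrically, using the module/comodule compatibility of $b_A$ and (\ref{AsubH-2}).

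\textbf{Main obstacle.} The delicate point throughout is the absence of associativity: every manipulation that would be a one-line reassociation in the Hopf-algebra case must instead be routed through one of the substitute identities (\ref{AsubH-2}), (\ref{AsubH-3}), (\ref{mu-assoc-1})–(\ref{mu-assoc-4}), or the module-type relations (\ref{nablaAmodulo}), (\ref{msubAdemodulos}), and one must be careful that the intermediate morphisms actually factor through the relevant (co)equalizers $n_A$, $p_{A\ot H}$, $i_A$. In particular, checking that the candidate $\gamma_A^{-1}$ in the direction (ii)$\Rightarrow$(i) is well defined and two-sided inverse, and that $h^{-1}$ in the direction (i)$\Rightarrow$(ii) satisfies (\ref{primeraequiv}) and (\ref{segundaequiv}), will each require a sustained diagram computation in which the only non-formal inputs are the cleft axioms (c1)–(c4) (resp.\ the normal-basis isomorphism and almost-linearity) and the weak-Hopf-quasigroup identities recalled in Section 1; isolating exactly which of those inputs is needed at each step is the real content, and it is what Propositions \ref{equivalencia1} and \ref{equivalencia2} are designed to streamline.
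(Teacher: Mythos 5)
Your overall architecture matches the paper's (construct $\gamma_A^{-1}$ from the cleaving data in one direction, construct the cleaving data from $m_A$, $\gamma_A^{-1}$ and the normal basis in the other, and route (c2), (c4) through Propositions \ref{equivalencia1} and \ref{equivalencia2}), but there is a genuine error in the direction (i) $\Rightarrow$ (ii): you have interchanged the roles of $h$ and $h^{-1}$. The morphism $m_A\co\gamma_A^{-1}\co p_{A\ot H}\co(\eta_A\ot H)$ is the convolution-type \emph{inverse} $h^{-1}$, not the cleaving morphism. Via (\ref{terceracondicionmA}) and (\ref{igualdadesgalois-1}) this morphism satisfies the antipode-twisted colinearity (\ref{primeraequiv}), i.e.\ condition (c2) --- which is exactly what $h^{-1}$ must satisfy --- and it is \emph{not} a right $H$-comodule morphism, so with your assignment the very first requirement of Definition \ref{Cleft} fails. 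The cleaving morphism has to be manufactured from the normal-basis isomorphism, namely $h=\omega_A\co(\eta_{A^{co H}}\ot H)$ with $\omega_A=b_A^{-1}\co r_A$ (colinearity of $h$ is then inherited from $b_A$ and $\Omega_A$), and the normal basis enters the proof essentially through the identity $\overline{\mu}_A=\mu_A\co(m_A\ot h)\co\rho^{2}_{A\ot_{A^{co H}}A}$, which is what makes (c1) and (c3) come out. Your sketch never uses $b_A$ to build either morphism, and your ``$h^{-1}$ obtained from $\gamma_A^{-1}$ together with the antipode'' is not a construction; the antipode is already absorbed inside $\gamma_A^{-1}$.

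A second, smaller defect is in (ii) $\Rightarrow$ (i): neither of your candidate formulas for $\gamma_A^{-1}$ is the translation map. The expression $n_A\co(A\ot(\mu_A\co(A\ot h^{-1})\co\rho_A))=n_A\co(A\ot q_A)$ is defined on $A\ot A$, not on $A\square H$, and involves only $h^{-1}$; the correct candidate is $n_A\co(\mu_A\ot A)\co(A\ot((h^{-1}\ot h)\co\delta_H))\co i_{A\ot H}$, which needs \emph{both} $h^{-1}$ and $h$ (one verifies $\gamma_A\co\gamma_A^{-1}=id$ using (c3) and the idempotency of $\nabla_A$, and $\gamma_A^{-1}\co\gamma_A\co n_A=n_A$ using Proposition \ref{equivalencia2}, $q_A=i_A\co p_A$ and (v) of Proposition \ref{propiedades basicas}). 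The rest of your plan for this direction (almost-linearity via (\ref{varphi}) and (iii) of Proposition \ref{propiedades basicas}, the normal basis via $\omega_A=\mu_A\co(i_A\ot h)$ and $\omega_A'=(p_A\ot H)\co\rho_A$) is in the right spirit, but as written the proposal cannot be completed without correcting the two points above.
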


\begin{proof}
(i) $\Rightarrow$ (ii) Let $A^{co H}\hookrightarrow A$ be a weak $H$-Galois extension with normal basis. Using that $\Omega_A$ is a morphism of left $A^{co H}$-modules and right $H$-comodules it is  not difficult to see that so are the morphisms $\omega_A=b_A^{-1}\co r_A:A^{co H}\ot H\rightarrow A$ and $\omega_A^{\prime}=s_A\co b_A:A\rightarrow A^{co H}\ot H$. Now define 
$$h=\omega_A\co (\eta_{A^{co H}}\ot H).$$ Taking into account that $\omega_A$ is a morphism of $H$-comodules, so is $h$.

Let $h^{-1}$ be the morphism defined as 
$$h^{-1}=m_A\co \gamma_{A}^{-1}\co p_{A\ot H}\co (\eta_A\ot H),$$
 where $m_A$ is the morphism obtained in Lemma \ref{morfismomA}. By Proposition \ref{monoidecoinvariantes}, (\ref{condicionmA}), and taking into account that $\omega_A^{\prime}$ is a morphism of $H$-comodules we obtain that 
 $$(m_A\ot H)\co \rho^{2}_{A\ot_{A^{co H}}A}\co n_{A}=(\mu_A\ot H)\co (A\ot ((i_A\ot H)\co \omega_A^{\prime}))$$ and then, by (\ref{AsubH-2}) and using that $\omega_{A}$ is a morphism of $A^{co H}$-comodules, we get that 
\begin{itemize}
\item[ ]$\hspace{0.38cm} \mu_A\co (m_A\ot (\omega_A\co (\eta_{A^{co H}}\ot H)))\co \rho^{2}_{A\ot_{A^{co H}}A}\co n_{A}$
\item[ ]$=\mu_A\co ((\mu_A\co (A\ot i_A)\ot (\omega_A\co (\eta_{A^{co H}}\ot H))))\co (A\ot \omega_A^{\prime})$
\item[ ]$=\mu_A\co (A\ot (\omega_A\co \omega_A^{\prime}))$
\item[ ]$=\mu_A.$
\end{itemize}
As a consequence,
\begin{equation}
\label{igualdadmsubAomega}
\overline{\mu}_A=\mu_A\co (m_A\ot h)\co \rho^{2}_{A\ot_{A^{co H}}A},
\end{equation}
where $\overline{\mu}_A$ denotes the factorization of the morphism $\mu_A$ through the coequalizer $n_{A}$, i.e., $\overline{\mu}_A\co n_{A}=\mu_A$. Note that 
\begin{equation}
\label{igualdadmsubAomega-2}
\overline{\mu}_A=(A\ot \varepsilon_{H})\co i_{A\ot H}\co \gamma_{A}
\end{equation}
also holds.

Now we show conditions (c1)-(c4) of Definition \ref{Cleft}. Using (\ref{igualdadesgalois-2}), (\ref{igualdadmsubAomega}) and the equality (\ref{igualdadmsubAomega-2}), we get (c1). Indeed,

\begin{itemize}
\item[ ]$\hspace{0.38cm} h^{-1}*h$
\item[ ]$=\mu_A\co (m_A\ot h)\co \rho^{2}_{A\ot_{A^{co H}}A}\co \gamma_{A}^{-1}\co p_{A\ot H}\co (\eta_A\ot H)$
\item[ ]$=\overline{\mu}_A\co \gamma_{A}^{-1}\co p_{A\ot H}\co (\eta_A\ot H)$
\item[ ]$=(A\ot \varepsilon_H)\co i_{A\ot H}\co \gamma_A\co \gamma_{A}^{-1}\co p_{A\ot H}\co (\eta_A\ot H)$
\item[ ]$=(A\ot \varepsilon_H)\co \nabla_A\co (\eta_A\ot H)$
\item[ ]$=(A\ot (\varepsilon_H\co \mu_{H}))\co (c_{H,A}\ot H)\co (H\ot (\rho_A\co \eta_A)).$
\end{itemize}

The proof for  (c2) is the following: In one hand we have

\begin{itemize}
\item[ ]$\hspace{0.38cm} (A\ot \mu_H)\co (c_{H,A}\ot H)\co (H\ot (\rho_A\co h^{-1}))\co \delta_H$

\item[ ]$=(A\ot \mu_H)\co (c_{H,A}\ot H)\co (H\ot ((m_A\ot H)\co \rho^{1}_{A\ot_{A^{co H}}A}\co \gamma_{A}^{-1}\co p_{A\ot H}\co (\eta_A\ot H)))\co \delta_H$

\item[  ]$=(A\ot \mu_H)\co (c_{H,A}\ot H)
\co (H\ot (((m_A\co \gamma_{A}^{-1}\co p_{A\ot H})\ot H)\co (A\ot c_{H,H})\co (A\ot (\mu_{H}\co (H\ot \lambda_{H}))\ot H)$
\item[ ]$\hspace{0.38cm}\co (\rho_{A}\ot \delta_{H}) \co \nabla_{A}\co (\eta_A\ot H))) \co \delta_{H}$

\item[  ]$= ((m_A\co \gamma_{A}^{-1}\co p_{A\ot H})\ot \mu_{H})\co (A\ot c_{H,H}\ot H)\co (c_{H,A}\ot (c_{H,H}\co 
((\mu_{H}\ot H)\co (H\ot ( (\lambda_{H}\ot H)\co\delta_{H}\co \mu_{H})))))$
\item[ ]$\hspace{0.38cm}\co (H\ot \rho_{A}\ot H\ot H)\co (H\ot c_{H,A}\ot H)\co (\delta_{H}\ot (\rho_{A}\co \eta_{A}))$

\item[  ]$= ((m_A\co \gamma_{A}^{-1}\co p_{A\ot H})\ot \mu_{H})\co (A\ot c_{H,H}\ot H)\co (c_{H,A}\ot (c_{H,H}\co 
((\mu_{H}\ot H)$
\item[ ]$\hspace{0.38cm}\co (H\ot (( (\mu_{H}\co c_{H,H}\co (\lambda_{H}\ot \lambda_{H}))\ot \mu_{H})\co \delta_{H\ot H})))))\co (H\ot \rho_{A}\ot H\ot H)\co (H\ot c_{H,A}\ot H)$
\item[ ]$\hspace{0.38cm}\co (\delta_{H}\ot (\rho_{A}\co \eta_{A}))$

\item[  ]$=((m_A\co \gamma_{A}^{-1}\co p_{A\ot H})\ot H)\co (A\ot c_{H,H})\co (A\ot \mu_{H}\ot H)\co 
(c_{H,A}\ot ((H\ot (\mu_{H}\co (\lambda_{H}\ot H)))\co (\delta_{H}\ot H)) \ot \mu_{H})$
\item[ ]$\hspace{0.38cm}\co (H\ot \rho_{A}\ot \lambda_{H}\ot H\ot H)\co (H\ot c_{H,A}\ot H\ot H)\co (\delta_{H}\ot c_{H,A}\ot H)\co (\delta_{H}\ot (\rho_{A}\co \eta_{A}))$

\item[  ]$= ((m_A\co \gamma_{A}^{-1}\co p_{A\ot H})\ot H)\co (A\ot c_{H,H})\co (A\ot (\mu_{H}\co (H\ot 
(\mu_{H}\co (\Pi_{H}^{L}\ot H))))\ot H)\co (c_{H,A}\ot H\ot H\ot \mu_{H})$
\item[ ]$\hspace{0.38cm}\co (H\ot \rho_{A}\ot \lambda_{H}\ot H\ot H)\co (H\ot c_{H,A}\ot H\ot H)\co (\delta_{H}\ot c_{H,A}\ot H)\co (\delta_{H}\ot (\rho_{A}\co \eta_{A}))$

\item[  ]$= ((m_A\co \gamma_{A}^{-1}\co p_{A\ot H})\ot H)\co (A\ot c_{H,H})\co (A\ot (\mu_{H}\co ((\mu_{H}\co ((H\ot \Pi_{H}^{L})\ot H))))\ot H)\co (c_{H,A}\ot H\ot H\ot \mu_{H})$
\item[ ]$\hspace{0.38cm}\co (H\ot \rho_{A}\ot \lambda_{H}\ot H\ot H)\co (H\ot c_{H,A}\ot H\ot H)\co (\delta_{H}\ot c_{H,A}\ot H)\co (\delta_{H}\ot (\rho_{A}\co \eta_{A}))$

\item[  ]$= ((m_A\co \gamma_{A}^{-1}\co p_{A\ot H})\ot H)\co (A\ot c_{H,H})$
\item[ ]$\hspace{0.38cm}\co (A\ot (\mu_{H}\co (((\varepsilon_{H}\co\mu_{H})\ot H)\co (H\ot c_{H,H})\co (\delta_{H}\ot H))\ot H)\ot H)\co (c_{H,A}\ot H\ot H\ot \mu_{H})$
\item[ ]$\hspace{0.38cm}\co (H\ot \rho_{A}\ot \lambda_{H}\ot H\ot H)\co (H\ot c_{H,A}\ot H\ot H)\co (\delta_{H}\ot c_{H,A}\ot H)\co (\delta_{H}\ot (\rho_{A}\co \eta_{A}))$

\item[  ]$= ((m_A\co \gamma_{A}^{-1}\co p_{A\ot H})\ot H)\co (A\ot c_{H,H})\co 
(((((A\ot (\varepsilon_{H}\co \mu_{H}))\co (c_{H,A}\ot H)\co (H\ot \rho_{A}))\ot \Pi_{H}^{L})$
\item[ ]$\hspace{0.38cm}\co (H\ot c_{H,A})\co (\delta_{H}\ot A))\ot \mu_{H})\co (H\ot c_{H,A}\ot H)\co (\delta_{H}\ot (\rho_{A}\co \eta_{A}))$

\item[  ]$= ((m_A\co \gamma_{A}^{-1}\co p_{A\ot H})\ot H)\co (A\ot c_{H,H})\co (((A\ot (((\varepsilon_{H}\co \mu_{H})\ot \Pi_{H}^{L})\co (H\ot c_{H,H})\co (\delta_{H}\ot H)))$
\item[ ]$\hspace{0.38cm}\co (c_{H,A}\ot H)\co (H\ot \rho_{A}))\ot \mu_{H})\co (H\ot c_{H,A}\ot H)\co (\delta_{H}\ot (\rho_{A}\co \eta_{A}))$

\item[  ]$= ((m_A\co \gamma_{A}^{-1}\co p_{A\ot H})\ot H)\co (A\ot c_{H,H})\co (((A\ot (\Pi_{H}^{L}\co\mu_{H}\co (H\ot \Pi_{H}^{L})))\co (c_{H,A}\ot H)\co (H\ot \rho_{A}))\ot \mu_{H})$
\item[ ]$\hspace{0.38cm}\co (H\ot c_{H,A}\ot H)\co (\delta_{H}\ot (\rho_{A}\co \eta_{A}))$

\item[  ]$= ((m_A\co \gamma_{A}^{-1}\co p_{A\ot H})\ot H)\co (A\ot c_{H,H})\co (((A\ot (\Pi_{H}^{L}\co\mu_{H}))\co (c_{H,A}\ot H)\co (H\ot \rho_{A}))\ot \mu_{H})$
\item[ ]$\hspace{0.38cm}\co (H\ot c_{H,A}\ot H)\co (\delta_{H}\ot (\rho_{A}\co \eta_{A}))$

\item[  ]$= ((m_A\co \gamma_{A}^{-1}\co p_{A\ot H})\ot H)\co (A\ot c_{H,H})\co (A\ot \Pi_{H}^{L}\ot H)\co 
(A\ot (\mu_{H\ot H}\co (\delta_{H}\ot \delta_{H})))$
\item[ ]$\hspace{0.38cm}\co (c_{H,A}\ot H)\co (H\ot (\rho_{A}\co \eta_{A}))$

\item[  ]$= ((m_A\co \gamma_{A}^{-1}\co p_{A\ot H})\ot H)\co (A\ot c_{H,H})\co (A\ot \Pi_{H}^{L}\ot H)\co 
(A\ot (\delta_{H}\co \mu_{H})) \co (c_{H,A}\ot H)\co (H\ot (\rho_{A}\co \eta_{A})),$
\end{itemize}
where the first equality follows by (\ref{terceracondicionmA}), the second one follows by (\ref{igualdadesgalois-1}) and the naturalness of $c$, the third one follows by the naturalness of $c$ and the unit properties and the fourth one is a consequence of (a1) of Definition \ref{Weak-Hopf-quasigroup} and (\ref{anti-antipode-1}). The fifth and the thirteenth equalities rely on the comodule condition for $A$ and on the naturalness of $c$. In the sixth one we used (a4-5) of Definition \ref{Weak-Hopf-quasigroup} and the seventh one follows by (\ref{monoid-hl-1}). The eighth and the eleventh ones are a consequence of (\ref{mu-pi-l}) and the ninth one was obtained using  the naturalness of $c$ and the coassociativity of $\delta_{H}$. The tenth one follows by the naturalness of $c$ and the twelfth one relies on (\ref{pi-delta-mu-pi-1}). Finally, the last one follows by (a1) of Definition \ref{Weak-Hopf-quasigroup}.

On the other hand, 

\begin{itemize}

\item[ ]$\hspace{0.38cm}  (A\ot \overline{\Pi}_{H}^{R})\co \rho_A\co h^{-1}$ 

\item[ ]$=(m_A\ot \overline{\Pi}_{H}^{R})\co \rho^{1}_{A\ot_{A^{co H}}A}\co \gamma_{A}^{-1}\co p_{A\ot H}\co (\eta_A\ot H)$

\item[ ]$=((m_A\co \gamma_{A}^{-1}\co p_{A\ot H})\ot \overline{\Pi}_{H}^{R})\co (A\ot c_{H,H})\co (A\ot \mu_H\ot H)\co (\rho_A\ot ((\lambda_H\ot H)\co \delta_H))\co \nabla_A\co (\eta_A\ot H)$

\item[ ]$=((m_A\co \gamma_{A}^{-1}\co p_{A\ot H})\ot \overline{\Pi}_{H}^{R})\co (A\ot c_{H,H})\co (A\ot \mu_H\ot H)\co (\rho_A\ot ((\lambda_H\ot H)\co \delta_H\co \mu_H))$
\item[ ]$\hspace{0.38cm} \co (c_{H,A}\ot H)\co (H\ot (\rho_A\co \eta_A))$

\item[ ]$=((m_A\co \gamma_{A}^{-1}\co p_{A\ot H})\ot \overline{\Pi}_{H}^{R})\co (A\ot c_{H,H})\co 
(A\ot (\mu_H\co (H\ot \mu_{H})\co (H\ot \lambda_{H}\ot H)\co (\delta_{H}\ot H))\ot H)$
\item[ ]$\hspace{0.38cm} \co (\rho_A\ot \lambda_H\ot \mu_{H})\co (A\ot \delta_{H}\ot H)\co (c_{H,A}\ot H)\co (H\ot (\rho_A\co \eta_A))$

\item[ ]$=((m_A\co \gamma_{A}^{-1}\co p_{A\ot H})\ot \overline{\Pi}_{H}^{R})\co (A\ot c_{H,H})\co 
(A\ot (\mu_H\co (\Pi_{H}^{L}\ot H))\ot H) \co (\rho_A\ot \lambda_H\ot \mu_{H})$
\item[ ]$\hspace{0.38cm}\co (A\ot \delta_{H}\ot H)\co (c_{H,A}\ot H)\co (H\ot (\rho_A\co \eta_A))$

\item[ ]$=((m_A\co \gamma_{A}^{-1}\co p_{A\ot H})\ot H)\co (A\ot c_{H,H})\co 
(A\ot (\overline{\Pi}_{H}^{R}\co \mu_H\co ((\overline{\Pi}_{H}^{R}\co \lambda_{H})\ot H))\ot H)\co (\rho_A\ot \lambda_H\ot \mu_{H})$
\item[ ]$\hspace{0.38cm} \co (A\ot \delta_{H}\ot H)\co (c_{H,A}\ot H)\co (H\ot (\rho_A\co \eta_A))$

\item[ ]$=((m_A\co \gamma_{A}^{-1}\co p_{A\ot H})\ot H)\co (A\ot c_{H,H})\co 
(A\ot (\overline{\Pi}_{H}^{R}\co \lambda_{H}\co \mu_H\co c_{H,H})\ot H)\co (\rho_A\ot H\ot \mu_{H})$
\item[ ]$\hspace{0.38cm} \co (A\ot \delta_{H}\ot H)\co (c_{H,A}\ot H)\co (H\ot (\rho_A\co \eta_A))$

\item[ ]$=((m_A\co \gamma_{A}^{-1}\co p_{A\ot H})\ot H)\co (A\ot c_{H,H})\co 
(A\ot (\Pi_{H}^{L}\co \mu_H\co c_{H,H})\ot H)\co (\rho_A\ot H\ot \mu_{H})$
\item[ ]$\hspace{0.38cm} \co (A\ot \delta_{H}\ot H)\co (c_{H,A}\ot H)\co (H\ot (\rho_A\co \eta_A))$

\item[  ]$= ((m_A\co \gamma_{A}^{-1}\co p_{A\ot H})\ot H)\co (A\ot c_{H,H})\co (A\ot \Pi_{H}^{L}\ot H)\co 
(A\ot (\mu_{H\ot H}\co (\delta_{H}\ot \delta_{H})))$
\item[ ]$\hspace{0.38cm}\co (c_{H,A}\ot H)\co (H\ot (\rho_{A}\co \eta_{A}))$

\item[  ]$= ((m_A\co \gamma_{A}^{-1}\co p_{A\ot H})\ot H)\co (A\ot c_{H,H})\co (A\ot \Pi_{H}^{L}\ot H)\co 
(A\ot (\delta_{H}\co \mu_{H})) \co (c_{H,A}\ot H)\co (H\ot (\rho_{A}\co \eta_{A})).$
\end{itemize}

In the preceding computations, the first equality follows by  (\ref{terceracondicionmA}), the second one by (\ref{igualdadesgalois-1}); in the third we use the unit properties and the fourth one follows by (a1) of Definition \ref{Weak-Hopf-quasigroup}, the comodule condition for $A$, and the naturalness of $c$. The fifth one is a consequence of (a4-5); the sixth one follows by (\ref{pi-antipode-composition-3}) and the naturalness of $c$. In the seventh one we applied (\ref{anti-antipode-1}) and the equality 
\begin{equation}
\label{pil-mu-pirvar-h}
\overline{\Pi}_{H}^{R}\co \mu_{H}\co (\overline{\Pi}_{H}^{R}\ot H)=\overline{\Pi}_{H}^{R}\co \mu_{H},
\end{equation}
which is a consequence of (\ref{pi-delta-mu-pi-2}), (\ref{pi-composition-3}) and (\ref{pi-composition-4}).
The eighth one relies on (\ref{pi-antipode-composition-3}),  and the ninth one follows by the comodule condition for $A$ and the naturalness of $c$. Finally, the last one follows by (a1) of Definition \ref{Weak-Hopf-quasigroup}. 

Therefore, (c2) holds, because 
$$(A\ot \mu_H)\co (c_{H,A}\ot H)\co (H\ot (\rho_A\co h^{-1}))\co \delta_H$$
$$=((m_A\co \gamma_{A}^{-1}\co p_{A\ot H})\ot H)\co (A\ot c_{H,H})\co (A\ot \Pi_{H}^{L}\ot H)\co 
(A\ot (\delta_{H}\co \mu_{H})) \co (c_{H,A}\ot H)\co (H\ot (\rho_{A}\co \eta_{A}))$$
$$=(A\ot \overline{\Pi}_{H}^{R})\co \rho_A\co h^{-1}.$$

To see (c3),
\begin{itemize}
\item[ ]$\hspace{0.38cm} \mu_{A}\co (\mu_{A}\ot A)\co (A\ot h^{-1}\ot h)\co (A\ot \delta_H)$
\item[ ]$=\mu_A\co ((m_A\co \varphi_{A\ot_{A^{co H}}A}\co (A\ot (\gamma_{A}^{-1}\co p_{A\ot H}\co 
( \eta_A\ot H))))\ot h)\co (A\ot \delta_{H})$
\item[ ]$=\mu_A\co ((m_A\co \gamma_{A}^{-1}\co p_{A\ot H})\ot h)\co (A\ot \delta_H)$
\item[ ]$=\mu_A\co (m_A\ot h)\co \rho^{2}_{A\ot_{A^{co H}}A}\co \gamma_{A}^{-1}\co p_{A\ot H}$                                               
\item[ ]$=\overline{\mu}_A\co \gamma_{A}^{-1}\co p_{A\ot H}$
\item[ ]$=(A\ot \varepsilon_H)\co \nabla_A$
\item[ ]$=(\mu_A\ot ((\varepsilon_H\co \mu_{H}))\co (c_{H,A}\ot H)\co (H\ot (\rho_A\co \eta_A)))$
\item[ ]$=\mu_{A}\co (A\ot (h^{-1}*h)),$
\end{itemize}
where the first equality follows by (\ref{msubAdemodulos}); the second one because $\gamma_{A}^{-1}$ is almost lineal (see (\ref{almostlineal})); in the third one we use (\ref{igualdadesgalois-2}); in the fourth one (\ref{igualdadmsubAomega}). The fifth one is a consequence of the equality (\ref{igualdadmsubAomega-2}); the sixth one relies on the definition of $\nabla_A$; and the last equality follows by (c1).

Finally, by (\ref{msubAdemodulos}), the condition of  almost lineal 
 for $\gamma_{A}^{-1}$ and (\ref{can-fact}), we have
\begin{itemize}
\item[ ]$\hspace{0.38cm} \mu_{A}\co (\mu_{A}\ot h^{-1})\co (A\ot \rho_A)$
\item[ ]$=m_A\co \varphi_{A\ot_{A^{co H}}A}\co (A\ot (\gamma_{A}^{-1}\co 
p_{A\ot H}\co (\eta_A\ot H)))\co (\mu_{A}\ot H)\co (A\ot \rho_A)$
\item[ ]$=m_A\co \gamma_{A}^{-1}\co p_{A\ot H}\co (\mu_A\ot H)\co (A\ot \rho_A)$
\item[ ]$=m_A\co n_{A}.$ 
\end{itemize}
Moreover,  by (\ref{msubAdemodulos}), the condition of  almost lineal 
 for $\gamma_{A}^{-1}$, (\ref{condicionmA}), and (\ref{SegundacondicionmA}) we obtain 
\begin{itemize}
\item[ ]$\hspace{0.38cm}\mu_{A}\co (A\ot q_A)$
\item[ ]$= m_{A}\co \varphi_{A\ot_{A^{co H}}A}\co (A\ot (\varphi_{A\ot_{A^{co H}}A}\co (A\ot (\gamma_{A}^{-1}\co p_{A\ot H}\co (\eta_A\ot H)))))\co (A\ot \rho_{A})$
\item[ ]$= m_{A}\co \varphi_{A\ot_{A^{co H}}A}\co (A\ot (\gamma_{A}^{-1}\co p_{A\ot H}\co \rho_A))$
\item[ ]$=\mu_A\co (A\ot (m_A\co \gamma_{A}^{-1}\co p_{A\ot H}\co \rho_A))$
\item[ ]$=m_A\co n_{A}.$
\end{itemize}
Therefore, by Proposition \ref{equivalencia2}, (c4) holds.

Now we will prove (ii) $\Rightarrow$ (i). Let $A^{co H}\hookrightarrow A$ be a weak $H$-cleft extension with cleaving morphism $h$. Then the morphism 
$$\gamma_{A}^{-1}=n_{A}\co (\mu_A\ot A)\co (A\ot ((h^{-1}\ot h)\co \delta_H))\co i_{A\ot H}$$
is the inverse of $\gamma_A$. Indeed, first note that by (c1) of Definition \ref{Cleft} we have 
\begin{equation}
\label{aux-fin-1}
\mu_A\co (A\ot (h^{-1}\ast h))=(A\ot \varepsilon_{H})\co \nabla_{A},
\end{equation}
and, as a consequence, using that $\nabla_{A}$ is a right $H$-comodule morphism, we obtain 
\begin{equation}
\label{aux-fin-2}
((\mu_A\co (A\ot (h^{-1}\ast h))\ot H)\co (A\ot \delta_{H})= \nabla_{A}.
\end{equation}

Then, $ \gamma_{A}\co \gamma_{A}^{-1}=id_{A\square H}$ because 
\begin{itemize}
\item[ ]$\hspace{0.38cm} i_{A\ot H}\co \gamma_{A}\co \gamma_{A}^{-1}$
\item[ ]$=\nabla_{A}\co (\mu_A\ot H)\co (A\ot (\rho_A\co h))\co ((\mu_A\co (A\ot h^{-1}))\ot H)\co (A\ot \delta_H)\co i_{A\ot H}$
\item[ ]$=\nabla_{A}\co ((\mu_A\co (\mu_A\ot A)\co (A\ot ((h^{-1}\ot h)\co \delta_H)))\ot H)\co (A\ot \delta_H)\co i_{A\ot H}$
\item[ ]$=\nabla_{A}\co ((\mu_A\co (A\ot (h^{-1}\ast h))\ot H)\co (A\ot \delta_{H})\co  i_{A\ot H}$                                      
\item[ ]$=\nabla_{A}\co \nabla_A\co i_{A\ot H}$
\item[ ]$=\nabla_A\co i_{A\ot H}$
\item[ ]$=i_{A\ot H},$
\end{itemize}
where the first equality follows by (\ref{can-fact}), the second one  taking into account that $h$ is a morphism of $H$-comodules and the coassociativity of $\delta_{H}$, the third one relies on  (c3) of Definition \ref{Cleft} and the fourth one follows by (\ref{aux-fin-2}). Finally the last equalities follow by the properties of $\nabla_{A}$.

The equality $ \gamma_{A}^{-1}\co \gamma_{A}=id_{A\ot_{A^{co H}}A}$ holds because 

\begin{itemize}
\item[ ]$\hspace{0.38cm} \gamma_{A}^{-1}\co \gamma_{A}\co n_{A}$
\item[ ]$=n_{A}\co (\mu_A\ot A)\co (A\ot (((h^{-1}\ot h)\co \delta_H)))\co \nabla_A\co (\mu_A\ot H)\co (A\ot \rho_A)$
\item[ ]$=n_{A}\co (\mu_A\ot A)\co (A\ot (((h^{-1}\ot h)\co \delta_H)))\co (\mu_A\ot H)\co (A\ot \rho_A)$
\item[ ]$=n_{A}\co ((\mu_A\co (\mu_A\ot h^{-1})\co (A\ot \rho_A))\ot h)\co (A\ot \rho_A)$
\item[ ]$=n_{A}\co ((\mu_A\co (A\ot q_A))\ot h)\co (A\ot \rho_A)$
\item[ ]$=n_{A}\co (A\ot (\mu_A\co (q_A\ot h)\co \rho_A))$
\item[ ]$=n_{A}\co (A\ot (\mu_A\co (\mu_{A}\ot A)\co (A\ot ((h^{-1}\ot h)\co \delta_{H})\co \rho_{A})))	$
\item[ ]$=n_{A}\co (A\ot (\mu_A\co (A\ot (h^{-1}*h))\co \rho_{A}))$
\item[ ]$=n_{A},$
\end{itemize}
where the first equality follows by  (\ref{can-fact}); the second one by (\ref{nabla-3}); in the third and the sixth ones we use that $A$ is a right $H$-comodule; the fourth one relies on Proposition \ref{equivalencia2}. The fifth equality follows because $q_A=i_A\co p_A$; the seventh one uses (c3) of Definition \ref{Cleft}; finally, the last one follows by (v) of Proposition \ref{propiedades basicas}.

Now we show that $\gamma_{A}^{-1}$ is almost lineal. Indeed, firstly note that 
\begin{itemize}
\item[ ]$\hspace{0.38cm} \varphi_{A\ot_{A^{co H}}A}\co (A\ot (\gamma_A^{-1}\co p_{A\ot H}\co (\eta_A\ot H)))$
\item[ ]$=n_{A}\co (\mu_A\ot A)\co (A\ot \mu_A\ot A)\co (A\ot A\ot ((h^{-1}\ot h)\co \delta_{H}))\co 
(A\ot (\nabla_{A}\co (\eta_{A}\ot H)))$
\item[ ]$=n_{A}\co ((\mu_A\co (A\ot (\mu_{A}\co (A\ot h^{-1})\co \nabla_{A}\co (\eta_{A}\ot H))))\ot h)\co 
(A\ot \delta_{H})$
\item[ ]$=n_{A}\co ((\mu_A\co (A\ot ((h^{-1}\ast h)\ast h^{-1})))\ot h)\co 
(A\ot \delta_{H}) $
\item[ ]$=n_{A}\co ((\mu_A\co (A\ot h^{-1}))\ot h)\co 
(A\ot \delta_{H}), $
\end{itemize}
where the first equality follows by the definition of $\gamma_{A}^{-1}$ and (\ref{varphi}), the second one because 
$\nabla_{A}$ is a right $H$-comodule morphism, the third one relies on (\ref{aux-fin-2}) and the last one follows by (iii) of Proposition \ref{propiedades basicas}.

Secondly, by similar arguments, and using (i) of Proposition \ref{propiedades basicas} and (\ref{AsubH-2}), we obtain 
\begin{itemize}
\item[ ]$\hspace{0.38cm} \gamma_A^{-1}\co p_{A\ot H}$
\item[ ]$=n_{A}\co (\mu_A\ot A)\co (A\ot ((h^{-1}\ot h)\co \delta_H))\co \nabla_A$
\item[ ]$=n_{A}\co (\mu_A\ot A)\co ( (\mu_{A}\co (A\ot (h^{-1}\ast h)))\ot ((h^{-1}\ot h)\co \delta_H))\co (A\ot \delta_{H})$ 
\item[ ]$=n_{A}\co ((\mu_A\co (A\ot ((h^{-1}\ast h)\ast h^{-1})))\ot h)\co 
(A\ot \delta_{H}) $
\item[ ]$=n_{A}\co ((\mu_A\co (A\ot h^{-1}))\ot h)\co 
(A\ot \delta_{H}). $
\end{itemize}
Therefore, $\gamma_{A}^{-1}$ is almost lineal.

To finish the proof we must show that the extension has a normal basis. Let $\omega_A$ and $\omega_{A}^{\prime}$ be the morphisms $\omega_A=\mu_A\co (i_A\ot h)$ and $\omega_{A}^{\prime}=(p_A\ot H)\co \rho_A$. By (c3) of Definition \ref{Cleft}, the comodule condition for $\rho_{A}$ and (v) of Proposition (\ref{propiedades basicas}), $\omega_A\co \omega_{A}^{\prime}=id_A$ and then the morphism $\Omega_A=\omega_{A}^{\prime}\co \omega_A:A^{co H}\ot H\rightarrow A^{co H}\ot H$ is idempotent. Let $s_A:A^{co H}\times H\rightarrow A^{co H}\ot H$ and $r_A:A^{co H}\ot H\rightarrow A^{co H}\times H$ be the morphisms such that $s_A\co r_A=\Omega_A$ and $r_A\co s_A=id_{A^{co H}\times H}$. Taking into account that $h$ is a comodule morphism and (\ref{muArhoA-1}), it is not difficult to see that $\Omega_A$ is a morphism of right $H$-comodules.  Also, 
\begin{equation}
\label{omega-ia}
(i_{A}\ot H)\co \Omega_{A}=(\mu_{A}\ot H)\co (i_{A}\ot ((h\ast h^{-1})\ot H)\co \delta_{H}),
\end{equation}
because 
\begin{itemize}
\item[ ]$\hspace{0.38cm} (i_{A}\ot H)\co \Omega_{A}$
\item[ ]$= ((q_{A}\co \mu_{A})\ot H)\co (i_{A}\ot ((h\ot H)\co \delta_{H})$
\item[ ]$= ((\mu_{A}\co (\mu_{A}\ot h^{-1})\co (A\ot (\rho_{A}\co h)))\ot H)\co (i_{A}\ot  \delta_{H}) $ 
\item[ ]$=((\mu_{A}\co (\mu_{A}\ot A)\co (A\ot ((h\ot h^{-1})\co \delta_{H})))\ot H)\co (i_{A}\ot  \delta_{H}) $  
\item[ ]$=  (\mu_{A}\ot H)\co (i_{A}\ot ((h\ast h^{-1})\ot H)\co \delta_{H}),$
\end{itemize}
where the first equality follows by the definition of $\Omega_{A}$, the second one follows by (\ref{muArhoA-1}), in the third one we use the condition of comodule morphism  for $h$,   and the last one relies on (c4) of Definition \ref{Cleft}. 

To prove that $\Omega_{A}$ is a morphism of left $A^{co H}$-modules, first note that, by 
(\ref{muArhoA-1}), (\ref{segundaequiv}) and (\ref{mu-coinv}), we have 
\begin{itemize}
\item[ ]$\hspace{0.38cm} q_{A}\co \mu_{A}\co (i_{A}\ot A)$
\item[ ]$= \mu_{A}\co (A\ot h^{-1})\co \rho_{A}\co \mu_{A}\co (i_{A}\ot A)$
\item[ ]$= \mu_{A}\co (\mu_{A}\ot h^{-1})\co (i_{A}\ot \rho_{A})$
\item[ ]$=\mu_{A}\co (i_{A}\ot q_{A})$
\item[ ]$=\mu_{A}\co (i_{A}\ot (i_{A}\co p_{A})) $
\item[ ]$=i_{A}\co \mu_{A^{co H}}\co ( A^{co H}\ot p_{A}),$
\end{itemize}
and then 
\begin{equation}
\label{old-equa}
p_{A}\co \mu_{A}\co (i_{A}\ot A)=\mu_{A^{co H}}\co ( A^{co H}\ot p_{A}).
\end{equation}
Therefore the equality 
\begin{equation}
\label{omega-fin}
\Omega_{A}=((\mu_{A^{co H}}\co (A^{co H}\ot p_{A}))\ot H)\co ( A^{co H}\ot (\rho_{A}\co h))
\end{equation}
holds because, by (\ref{muArhoA-1}) and (\ref{old-equa}), 
$$\Omega_{A}
=((p_{A}\co\mu_{A} \co (i_{A}\ot A))\ot H)\co ( A^{co H}\ot (\rho_{A}\co h))  
=((\mu_{A^{co H}}\co (A^{co H}\ot p_{A}))\ot H)\co ( A^{co H}\ot (\rho_{A}\co h)).$$

Then, $\Omega_{A}$ is a morphism of left $A^{co H}$-modules. Indeed, by (\ref{omega-fin})
\begin{itemize}
\item[ ]$\hspace{0.38cm}(\mu_{A^{co H}}\ot H)\co (A^{co H}\ot \Omega_{A})$
\item[ ]$=(\mu_{A^{co H}}\ot H)\co  (A^{co H} \ot (((\mu_{A^{co H}}\co (A\ot p_{A}))\ot H)\co ( A^{co H}\ot (\rho_{A}\co h))))$
\item[ ]$= ((\mu_{A^{co H}}\co (A\ot p_{A}))\ot H)\co ( A^{co H}\ot (\rho_{A}\co h))\ot (\mu_{A^{co H}}\ot H)$
\item[ ]$= \Omega_{A}\co  (\mu_{A^{co H}}\ot H).$
\end{itemize}

Finally, let $b_A=r_A\co \omega_{A}^{\prime}$. Using that $\Omega_{A}$ is a right $H$-comodule morphism, we obtain that $b_A$ is a right $H$-comodule morphism. Also, it is easy to show that $b_A$ is an isomorphism with inverse $b_{A}^{-1}=\omega_A\co s_A$. Finally, the morphism $b_A$  is a morphism of left $A^{co H}$-modules because its inverse is a morphism of left $A^{co H}$-modules. Indeed, using that $\Omega_{A}$ is a morphism of left $A^{co H}$-modules, (\ref{mu-coinv}) and (\ref{AsubH-2}), we have 
\begin{itemize}
\item[ ]$\hspace{0.38cm} b_{A}^{-1}\co \varphi_{A^{co H}\times H}$
\item[ ]$= \mu_{A}\co (i_{A}\ot h)\co \Omega_{A}\co (\mu_{A^{co H}}\ot H)\co (A^{co H}\ot s_{A})$
\item[ ]$= \mu_{A}\co ((i_{A}\co \mu_{A^{co H}})\ot h)\co (A^{co H}\ot (\Omega_{A}\co s_{A})) $ 
\item[ ]$=((\mu_{A}\co (\mu_{A}\ot A)\co (i_{A}\ot i_{A}\ot h)\co (A^{co H}\ot s_{A}) $  
\item[ ]$=  \mu_{A}\co (i_{A}\ot (\mu_{A}\co (i_{A}\ot h)))\co (A^{co H}\ot s_{A})$
\item[ ]$=  \mu_{A}\co (i_{A}\ot b_{A}^{-1}).$
\end{itemize}

\end{proof}

\begin{remark}
{\rm  In the associative setting conditions (\ref{AsubH-2}), (\ref{AsubH-3})  hold and, for example, the previous result generalizes the one proved by Doi and Takeuchi for Hopf algebras in \cite{doi3}. Also, for a weak Hopf algebra $H$, by Remark \ref{Galoiswhaandhq}, we obtain that  the assertions
\begin{itemize}
\item[(i)] $A^{co H}\hookrightarrow A$ is a weak $H$-Galois extension with normal basis,
\item[(ii)] $A^{co H}\hookrightarrow A$ is a weak $H$-cleft extension,
\end{itemize}
are equivalent for a right $H$-comodule monoid $A$. This equivalence is a particular instance of the one obtained in \cite{AFG2} for Galois extensions associated to weak entwining structures. 

}
\end{remark}

As a Corollary of Theorem \ref{caracterizacion}, for  Hopf quasigroups we have a result which shows the close connection between the notion of cleft right $H$-comodule algebra ($H$-cleft extension for Hopf quasigroups), introduced in \cite{AFGS-2}, and the one of $H$-Galois extension with normal basis introduced in this paper. Also, when $A^{co H}=K$ we have the equivalence proved in \cite{AFG-3} because, in this case, $i_{A}=\eta_{A}$.

\begin{corollary}
\label{corolarioHq}
Let $H$ be a Hopf quasigroup and let $(A, \rho_A)$ be a right $H$-comodule magma satisfying (\ref{AsubH-2}), (\ref{AsubH-3}) and such that the functor $A\ot -$ preserves coequalizers. The following assertions are equivalent.
\begin{itemize}
\item[(i)] $A^{co H}\hookrightarrow A$ is an $H$-Galois extension with normal basis, the morphism $\gamma_{A}^{-1}$ is almost lineal, $\Omega_{A}=id_{A^{co H}\ot H}$ and $b_{A}\co \eta_{A}=\eta_{A^{co H}}\ot \eta_{H}$.
\item[(ii)] $A^{co H}\hookrightarrow A$ is an $H$-cleft extension.
\end{itemize}
\end{corollary}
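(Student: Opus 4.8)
The plan is to derive the Corollary from Theorem \ref{caracterizacion} by specializing its hypotheses to a Hopf quasigroup $H$. As recalled in Remark \ref{Galoiswhaandhq}, in this situation $\Pi_{H}^{L}=\Pi_{H}^{R}=\overline{\Pi}_{H}^{L}=\overline{\Pi}_{H}^{R}=\eta_{H}\ot\varepsilon_{H}$, so that $\nabla_{A}=id_{A\ot H}$, $A\square H=A\ot H$, $p_{A\ot H}=i_{A\ot H}=id_{A\ot H}$, $\varphi_{A\square H}=\mu_{A}\ot H$, and Definition \ref{Cleft} becomes Definition \ref{CleftparaHq}; likewise ``weak $H$-Galois extension'' becomes ``$H$-Galois extension''. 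Theorem \ref{caracterizacion} therefore already yields the equivalence between ``$A^{co H}\hookrightarrow A$ is an $H$-Galois extension with normal basis and $\gamma_{A}^{-1}$ is almost lineal'' and ``$A^{co H}\hookrightarrow A$ is a weak $H$-cleft extension in the sense of Definition \ref{CleftparaHq}''. Hence it only remains to show that, via the explicit constructions appearing in the proof of Theorem \ref{caracterizacion}, the two extra conditions on the Galois side, $\Omega_{A}=id_{A^{co H}\ot H}$ and $b_{A}\co\eta_{A}=\eta_{A^{co H}}\ot\eta_{H}$, are equivalent to the two extra conditions ``$h\ast h^{-1}=\varepsilon_{H}\ot\eta_{A}$'' and ``$h\co\eta_{H}=\eta_{A}$'' on the cleft side; by Remark \ref{cleft-previo} the latter two are precisely what turns a weak $H$-cleft extension into an $H$-cleft extension in the sense of \cite{AFGS-2}.

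For the implication (ii)$\Rightarrow$(i), I would start from an $H$-cleft extension in the sense of \cite{AFGS-2}. By Remark \ref{cleft-previo}, conditions (d1)--(d4) of Definition \ref{CleftparaHq} hold, so $A^{co H}\hookrightarrow A$ is a weak $H$-cleft extension and Theorem \ref{caracterizacion} provides the $H$-Galois extension with normal basis and the almost lineal $\gamma_{A}^{-1}$, with the explicit normal-basis data $\Omega_{A}=\omega_{A}^{\prime}\co\omega_{A}$, $\omega_{A}=\mu_{A}\co(i_{A}\ot h)$, $\omega_{A}^{\prime}=(p_{A}\ot H)\co\rho_{A}$, $b_{A}=r_{A}\co\omega_{A}^{\prime}$. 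Since $q_{A}\co h=h\ast h^{-1}=\varepsilon_{H}\ot\eta_{A}=i_{A}\co(\varepsilon_{H}\ot\eta_{A^{co H}})$ and $q_{A}=i_{A}\co p_{A}$ with $i_{A}$ monic, one gets $p_{A}\co h=\varepsilon_{H}\ot\eta_{A^{co H}}$; feeding this, together with (\ref{muArhoA-1}), the comodule condition for $h$ and (\ref{old-equa}), into $\Omega_{A}$ reduces it to $((p_{A}\co\mu_{A}\co(i_{A}\ot h))\ot H)\co(A^{co H}\ot\delta_{H})=(A^{co H}\ot\varepsilon_{H}\ot H)\co(A^{co H}\ot\delta_{H})=id_{A^{co H}\ot H}$. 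Hence we may take $s_{A}=r_{A}=id$ and $b_{A}=(p_{A}\ot H)\co\rho_{A}$; finally $h\co\eta_{H}=\eta_{A}$ forces $h^{-1}\co\eta_{H}=\eta_{A}$ (compose $h^{-1}\ast h=\varepsilon_{H}\ot\eta_{A}$ with $\eta_{H}$, using $\delta_{H}\co\eta_{H}=\eta_{H}\ot\eta_{H}$), so $q_{A}\co\eta_{A}=h^{-1}\co\eta_{H}=\eta_{A}$, $p_{A}\co\eta_{A}=\eta_{A^{co H}}$, and $b_{A}\co\eta_{A}=\eta_{A^{co H}}\ot\eta_{H}$.

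For the implication (i)$\Rightarrow$(ii), assertion (i) contains assertion (i) of Theorem \ref{caracterizacion}, so the theorem provides a weak $H$-cleft extension with cleaving morphism $h=\omega_{A}\co(\eta_{A^{co H}}\ot H)$, $\omega_{A}=b_{A}^{-1}\co r_{A}$, and $h^{-1}=m_{A}\co\gamma_{A}^{-1}\co(\eta_{A}\ot H)$. Because $\Omega_{A}=id$ we may take $s_{A}=r_{A}=id$, so $h=b_{A}^{-1}\co(\eta_{A^{co H}}\ot H)$ and $h\co\eta_{H}=b_{A}^{-1}\co b_{A}\co\eta_{A}=\eta_{A}$ by hypothesis, i.e.\ $h$ is total. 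To obtain $h\ast h^{-1}=\varepsilon_{H}\ot\eta_{A}$, I would combine the almost lineality of $\gamma_{A}^{-1}$ (equation (\ref{almostlineal})) with (\ref{msubAdemodulos}) and the definition of $h^{-1}$ to get $m_{A}\co\gamma_{A}^{-1}=\mu_{A}\co(A\ot h^{-1})$; together with $\gamma_{A}^{-1}\co\rho_{A}=n_{A}\co(\eta_{A}\ot A)$ and (\ref{condicionmA}) this gives $q_{A}=\mu_{A}\co(A\ot h^{-1})\co\rho_{A}=m_{A}\co n_{A}\co(\eta_{A}\ot A)=(i_{A}\ot\varepsilon_{H})\co b_{A}$, whence $h\ast h^{-1}=q_{A}\co h=(i_{A}\ot\varepsilon_{H})\co b_{A}\co b_{A}^{-1}\co(\eta_{A^{co H}}\ot H)=\varepsilon_{H}\ot\eta_{A}$. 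By Remark \ref{cleft-previo}, the pair $(h,h^{-1})$ then makes $A^{co H}\hookrightarrow A$ an $H$-cleft extension in the sense of \cite{AFGS-2}.

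The routine part is the bookkeeping of the normal-basis morphisms $\Omega_{A},s_{A},r_{A},b_{A}$ and the reduction, already carried out in Remark \ref{cleft-previo}, of the \cite{AFGS-2}-conditions to (d1)--(d4). I expect the main obstacle to be the identity $q_{A}=(i_{A}\ot\varepsilon_{H})\co b_{A}$ (equivalently $p_{A}\co h=\varepsilon_{H}\ot\eta_{A^{co H}}$): in direction (i)$\Rightarrow$(ii) it rests on carefully combining the almost lineality of $\gamma_{A}^{-1}$ with property (\ref{msubAdemodulos}) of $m_{A}$ and on checking that, when $\Omega_{A}=id$, the morphism $(i_{A}\ot\varepsilon_{H})\co s_{A}\co b_{A}$ in (\ref{condicionmA}) collapses to $(i_{A}\ot\varepsilon_{H})\co b_{A}$; in direction (ii)$\Rightarrow$(i) the delicate point is instead to deduce $\Omega_{A}=id$ from $p_{A}\co h=\varepsilon_{H}\ot\eta_{A^{co H}}$ directly, avoiding any monicity assumption on $i_{A}\ot H$. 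Finally, when $A^{co H}=K$ one has $i_{A}=\eta_{A}$, so $\Omega_{A}=id$ and $b_{A}\co\eta_{A}=\eta_{A^{co H}}\ot\eta_{H}$ hold automatically and the Corollary recovers the equivalence of \cite{AFG-3}.
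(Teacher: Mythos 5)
Your proposal is correct and follows essentially the same route as the paper's proof: both directions reduce to Theorem \ref{caracterizacion} and then match the extra normal-basis conditions ($\Omega_{A}=id$, $b_{A}\co\eta_{A}=\eta_{A^{co H}}\ot\eta_{H}$) with the extra cleft conditions ($h\ast h^{-1}=\varepsilon_{H}\ot\eta_{A}$, $h$ total) via the identities $q_{A}\co h=h\ast h^{-1}$, (\ref{old-equa}), (\ref{SegundacondicionmA}) and the almost lineality of $\gamma_{A}^{-1}$. Your verification that $\Omega_{A}=\omega_{A}^{\prime}\co\omega_{A}$ collapses to the identity is literally the paper's check that $b_{A}\co b_{A}^{-1}=id$, and your explicit derivation of $b_{A}\co\eta_{A}=\eta_{A^{co H}}\ot\eta_{H}$ in direction (ii)$\Rightarrow$(i) supplies a small detail the paper leaves implicit.
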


\begin{proof} First, note that in this setting $\rho_{A}\co \eta_{A}=\eta_{A}\ot \eta_{H}$ and then $\nabla_{A}=id_{A\ot H}$. Also, the submonoid of coinvariants $A^{co H}$ is defined by the equalizer of $\rho_{A}$ and 
$A\ot \eta_{H}$. Therefore, 
\begin{equation}
\label{ia-hquasi}
\rho_{A}\co i_{A}=i_{A}\ot \eta_{H}.
\end{equation}

The proof for (i) $\Rightarrow$ (ii) is the following. Let $A^{co H}\hookrightarrow A$ be a weak $H$-Galois extension with normal basis. Assume that $\Omega_A=id_{A^{co H}\ot H}$. Then $r_A=id_{A^{co H}\ot H}=s_A$ and by Theorem \ref{caracterizacion}, $A^{co H}\hookrightarrow A$ is a weak $H$-cleft extension with cleaving morphism $h=b_A^{-1}\co (\eta_A\ot H)$, and whose convolution inverse is $h^{-1}=m_A\co \gamma_A^{-1}\co (\eta_A\ot H)$. Moreover,
\begin{itemize}
\item[ ]$\hspace{0.38cm} h*h^{-1}$
\item[ ]$=m_A\co \varphi_{A\ot_{A^{co H}}A}\co (A\ot (\gamma_A^{-1}\co (\eta_A\ot H)))\co \rho_A\co h$
\item[ ]$=m_A\co \gamma_A^{-1}\co \rho_A\co b_A^{-1}\co (\eta_A\ot H)$
\item[ ]$=(i_A\ot \varepsilon_H)\co b_A\co b_A^{-1}\co (\eta_A\ot H)$
\item[ ]$=\eta_A\ot \varepsilon_H,$
\end{itemize}
where the first equality follows because $h$ is a morphism of $H$-comodules and by (\ref{msubAdemodulos}); the second one uses that $\gamma_A^{-1}$ is almost lineal, and the third one relies on (\ref{SegundacondicionmA}). 

Also, $h\co \eta_{H}=\eta_{A}$ because $b_{A}\co \eta_{A}=\eta_{A^{co H}}\ot \eta_{H}$ holds. Therefore, by 
Remark \ref{cleft-previo}, $A^{co H}\hookrightarrow A$ is an $H$-cleft extension.

On the other hand,  let $A^{co H}\hookrightarrow A$ be an   $H$-cleft extension with cleaving morphism $h$. Then, $$h^{-1}\ast h=h\ast h^{-1}=\eta_{A}\ot \varepsilon_{H}$$ because 
$$\mu_{A}\co (\mu_{A}\ot A)\co (A\ot h^{-1}\ot h)\co (A\ot \delta_H)=A\ot \varepsilon_H=\mu_{A}\co (\mu_{A}\ot A)\co (A\ot h\ot h^{-1})\co (A\ot \delta_H).$$

Put $\Omega_A=id_{A^{co H}\ot H}$. Obviously it is an idempotent morphism of left $A^{co H}$-modules and right $H$-comodules. Consider the morphisms $b_A=(p_A\ot H)\co \rho_A$ and  $b_A^{-1}=\mu_A\co (i_A\ot h)$. Using that $A$ is a right $H$-comodule, we obtain
$$ b_{A}^{-1}\co b_A=\mu_{A}\co (\mu_{A}\ot A)\co (A\ot h^{-1}\ot h)\co (A\ot \delta_H)\co \rho_A=id_A.$$

On the other hand, applying that $h$ is a comodule morphism, (\ref{muArhoA-1}) and (\ref{old-equa}), we have 
\begin{itemize}
\item[ ]$\hspace{0.38cm}  b_A\co b_{A}^{-1}$
\item[ ]$=((p_A\co \mu_A\co (i_A\ot A))\ot H)\co (A^{co H}\ot (\rho_{A}\co h))$
\item[ ]$=((p_A\co \mu_A\co (i_A\ot h))\ot H)\co (A^{co H}\ot \delta_H)$
\item[ ]$=((\mu_{A^{co H}}\co (A^{co H}\ot (p_{A}\co h)))\ot H)\co (A^{co H}\ot \delta_H).$
\end{itemize}
Therefore, $b_A\co b_{A}^{-1} = id_{A^{co H}\ot H}$ because 
\begin{equation}
\label{fin-fin}
\mu_{A^{co H}}\co (A^{co H}\ot (p_{A}\co h))=A^{co H}\ot \varepsilon_{H}.
\end{equation}
Indeed, composing with $i_{A}$ we obtain
$$i_{A}\co \mu_{A^{co H}}\co (A^{co H}\ot (p_{A}\co h))=\mu_{A}\co (i_{A}\ot (h\ast h^{-1}))=i_{A}\ot 
\varepsilon_{H}$$
and then (\ref{fin-fin}) is proved.

Trivially, $b_A$ is a morphism of right $H$-comodules, and by (\ref{AsubH-2}), $b_{A}^{-1}$ is a morphism of left $A^{co H}$-modules. Then, $b_A$ is a morphism of left $A^{co H}$-modules.

\end{proof}

\section*{Acknowledgements}
The  authors were supported by  Ministerio de Econom\'{\i}a y Competitividad of Spain (European Feder support included). Grant MTM2013-43687-P: Homolog\'{\i}a, homotop\'{\i}a e invariantes categ\'oricos en grupos y \'algebras no asociativas.

\end{document}